\let\oldmarginpar\marginpar 
\renewcommand\marginpar[1]{\-\oldmarginpar{\raggedright\small\sf #1}}
\newcommand{\nc}{\newcommand}
\nc{\rc}{\renewcommand}
\nc{\C}{\mathbb{C}}
\nc{\Z}{\mathbb{Z}}
\nc{\N}{\mathbb{N}}
\nc{\Q}{\mathbb{Q}}
\nc{\R}{\mathbb{R}}
\rc{\t}{\mathrm}
\nc{\RR}{\mathcal{R}}
\nc{\PP}{\mathcal{P}}
\nc{\DD}{\mathcal{D}'}
\rc{\O}{\mathrm O}
\rc{\P}{\mathbb{P}}
\nc{\A}{\mathbb{A}}
\nc{\T}{\mathbb{T}}
\nc{\normx}{\parallel x \parallel}
\nc{\s}{\mathcal}
\rc{\t}{\mathrm}
\nc{\ts}{\t{L}_{\t{t}}}
\nc{\hs}{\t{L}_{\t{h}}}
\nc{\normm}{\parallel m \parallel}
\nc{\I}{\mathbf{1}}
\nc{\abss}{\mathrm{L}^1}  
\nc{\CC}{\mathcal{C}} 
\nc{\Real}{\mathrm{Re}}
\nc{\fun}{\t{Fun}}
\nc{\FF}{\mathcal{F}}
\nc{\GG}{\mathcal{G}}  
\nc{\HH}{\mathcal{H}}
\nc{\WW}{\mathcal{W}}
\nc{\UU}{\mathcal{U}}
\nc{\MM}{\mathcal{M}}
\nc{\VV}{\mathcal{V}}
\rc{\SS}{\mathcal{S}}
\rc{\sum}[1]{\underset{#1}{\Sigma}}
\nc{\pd}{\partial}
\nc{\D}{\Delta}
\nc{\n}{\parallel}
\nc{\normy}{\n y\n}
\nc{\G}{\Gamma}
\nc{\res}{\mathrm{R}}
\nc{\x}{\exp 2\pi \sqrt{-1}}
\nc{\e}{\epsilon}
\nc{\E}{\mathbf{e}}
\nc{\g}{\gamma}
\nc{\FD}{\mathrm{FD}}
\nc{\df}{$\mathrm{C}^{\infty}$}
\nc{\diff}{\mathrm{C}}
\nc{\rno}{\R^n\setminus\{0\}}
\nc{\units}{^{\times}}
\nc{\inverse}{^{-1}}
\nc{\reg}{\mathrm{reg}}
\nc{\sph}{\mathrm{S}}
\nc{\zn}{\Z^n}
\nc{\cstarn}{(\C\units)^n}
\nc{\sstarn}{(\sph^1)^n}
\nc{\qstar}{\Q\units}
\rc{\L}{\mathrm{L}^2}
\nc{\lrtn}{\L(\R^{2n}//\Z^{2n})}
\nc{\M}{\mathrm{M}}
\nc{\BB}{\mathcal{B}}
\nc{\hkr}{\hookrightarrow}
\nc{\EE}{\mathcal{E}}
\nc{\LL}{\mathcal{L}}
\rc{\SS}{\mathcal{S}} 
\nc{\Sing}{\mathrm{Sing}}
\nc{\KK}{\mathcal{K}} 
\nc{\hinf}{\HH_{\infty}}
\nc{\II}{\mathcal{I}}
\nc{\XX}{\mathcal{X}}
\nc{\YY}{\mathcal{Y}}
\nc{\OO}{\mathcal{O}} 
\nc{\WR}{W_{\R}}
\nc{\double}{(\rho_k (\Q^{\times}),\Q^m)}
\nc{\smooth}{\mathrm{C}^{\infty}}
\nc{\sms}{\mathrm{C}^{\infty}(m,s)}
\nc{\smsn}{\mathrm{C}^{\infty}(m,s)_n}
\nc{\cin}{\mathrm{C}^{\infty}(\R^{2n}//\Z^{2n})}
\nc{\din}{\DD(\R^{2n}//\Z^{2n})}
\nc{\wbar}{\overline{w}}
\nc{\tmp}{\mathcal{\SS}'(\R^n)}
\nc{\sch}{\SS(\R^n)}
\nc{\tpi}{2\pi\sqrt{-1}}
\rc{\a}{\tilde{a}}
\rc{\setminus}{\smallsetminus}
\nc{\wh}{\widehat}
\nc{\mc}{\mathcal}
\nc{\mf}{\mathbf}
\nc{\mb}{\mathbb}
\nc{\lr}{\longrightarrow}
\nc{\Hom}{\mathrm{Hom}}
\nc{\Ext}{\mathrm{Ext}}
\rc{\ll}{\longleftarrow}
\nc{\la}{\leftarrow}
\nc{\xr}{\xrightarrow}
\nc{\xl}{\xleftarrow}
\nc{\hr}{\hookrightarrow}
\nc{\dirlim}{\varinjlim}   
\nc{\invlim}{\varprojlim}  
\nc{\sx}{\s{S}(X)}
\nc{\sxp}{\s{S}(X')}
\nc{\ltx}{\t{L}^2(X)}
\nc{\ltxp}{\t{L}^2(X')}
\nc{\fb}{\s{F}_B}
\nc{\fbp}{\s{F}_{B'}}
\nc{\ltdouble}{\t{L}^2(X\times X'//\G\times\G')}
\nc{\smdouble}{\t{C}^{\infty}(X\times X'//\G\times \G')}
\nc{\disdouble}{\s{D}(X\times X'//\G\times \G')}
\nc{\ltdoublep}{\t{L}^2(X'\times X//\G'\times\G)}
\nc{\smdoublep}{\t{C}^{\infty}(X'\times X//\G'\times \G)}
\nc{\disdoublep}{\s{D}(X'\times X//\G'\times \G)}
\rc{\o}{\overline}
\nc{\test}{\t{C}^{\infty}_c}
\rc{\dot}[2]{\langle#1,#2\rangle}
\nc{\sgg}{\underset{\g\in\G}{\Sigma}}
\nc{\supp}{\t{supp}}
\nc{\semi}[3]{\|#1\|_{(#2,#3)}}
\nc{\semitest}[3]{\|#1\|^{(#2,#3)}}
\newtheorem{thm}{Theorem}[section]  
\newtheorem{lem}[thm]{Lemma}
\newtheorem{prop}[thm]{Proposition}
\newtheorem{cor}[thm]{Corollary}
\theoremstyle{remark}
\theoremstyle{definition}  
\newtheorem{rem}[thm]{Remark}
\newtheorem{dfn}[thm]{Definition} 
\newtheorem{ex}[thm]{Example}
\newtheorem{notn}[thm]{Notation}
\title{Summation and the Poisson formula}
\author{Madhav V. Nori}
\address{The University of Chicago \ 5734 S. University Ave \ Il 60637, USA}
\email{nori@math.uchicago.edu}
\dedicatory{Dedicated to T.N.Shorey}
\begin{document}
\maketitle
\begin{abstract}By giving the definition of the sum of a series indexed
by a set on which a group operates, we prove that the sum of the series that
defines the Riemann zeta function, the Epstein zeta function, and a few other
series indexed by $\Z^k$ has an intrinsic meaning as a complex number,
independent of the requirements of analytic continuation.
 The definition of the sum requires nothing
more than algebra and the concept of absolute convergence. The analytical
significance of the algebraically defined sum is then explained by an
argument that relies on the Poisson formula for tempered distributions. 
\end{abstract}

\section{Introduction}
We introduce two notions of summability of infinite series, t-summability (`t' stands for `translation') and h-summability ( `h' stands for `homothety'), with h-summability being more general. The definition of t-summability is given in \S1.1.  In \S1.2, we motivate h-summability with an example. 
The complete definition of h-summability is given in \S2.3.

The very first sentence of \S1.2 presents a problem in \emph{analysis}, which is circumvented by the \emph{algebraic} definition of h-summability.
Theorem \ref{mainthmintro}
stated in \S1.3 resolves the \emph{analytic}  problem.  Theorem \ref{mainthm} from which Theorem \ref{mainthmintro} is deduced is the main result of the paper. The Poisson
formula plays a central role in this development.

The intrinsic meaning of an infinite sum is our primary goal, not analytic continuation. Nevertheless,  the naturality of the summability definitions yields some applications to analytic continuation when the terms of the series are functions of a complex variable. A few such statements of analytic continuation have been listed in \S1.4.  The resulting meromorphic
functions do not possess a functional equation in general.

The last subsection \S1.5 of the introduction lays out the plan of the paper.
 
\subsection{t-summability}

We begin with the definition of t-summability of a series indexed by a commutative discrete group $\Gamma$.

Given a function
 $a:\Gamma\to\C$, we want to define 
 $\underset{\gamma\in\Gamma}{\Sigma}a(\gamma)\in\C$ in a translation-invariant manner.  Precisely we require axioms (i) and (ii) to hold.
 \\  \textbf{Axiom (i)}: The collection of $a:\G\to\C$ for which  $\underset{\gamma\in\Gamma}{\Sigma}a(\gamma)$ is defined is a complex linear subspace of $\C^{\G}$,
 and $a\mapsto  \underset{\gamma\in\Gamma}{\Sigma}a(\gamma)$ is a complex-linear functional defined on that subspace.

For $\g'\in\G$, define  $a_{\g'}:\G\to\C$ by $a_{\g'}(\g):=a(\g+\g')$ for all $\g\in\G$. 
\\ \textbf{Axiom (ii)}\footnote{\label{1} 
 Condition (C) in \S1.3 of Hardy's  book ``Divergent Series''  can be thought of Axiom (ii) for $\G=\Z$ and $\g'=1$;  Hardy's (A) and (B) are Axiom (i).}: If  $\underset{\gamma\in\Gamma}{\Sigma}a(\gamma)$ is defined,
 then $\underset{\gamma\in\Gamma}{\Sigma}a_{\g'}(\gamma)$ is defined and equals $\underset{\gamma\in\Gamma}{\Sigma}a(\gamma)$.
 
 In the presence of Axiom (i), one checks that Axiom (ii) implies the identity
\begin{equation} (\underset{\gamma\in\Gamma}{\Sigma}c(\gamma))(\underset{\gamma\in\Gamma}{\Sigma}a(\gamma))=\underset{\gamma\in\Gamma}{\Sigma}c*a(\gamma)
\end{equation}
for every finitely supported $c:\Gamma\to\C$, where $c*a$ denotes the convolution of $c$ with $a$.
 Now the above formula is valid when the series $\underset{\gamma\in\Gamma}{\Sigma}a(\gamma)$ is absolutely convergent.  We are thus led to the following definition: the series $\underset{\gamma\in\Gamma}{\Sigma}a(\gamma)$ is translation-summable, or simply t-summable (or t-convergent), 
  if there is a finitely supported $c:\Gamma\to\C$ 
  so that $0\neq\underset{\gamma\in\Gamma}{\Sigma}c(\gamma)$ and
 the series  $\underset{\gamma\in\Gamma}{\Sigma}c*a(\gamma)$ is absolutely convergent. The
 sum of the series is then defined by the formula below:
\begin{equation}
\underset{\gamma\in\Gamma}{\Sigma}a(\gamma):=
 (\underset{\gamma\in\Gamma}{\Sigma}c(\gamma))^{-1} \underset{\gamma\in\Gamma}{\Sigma}c*a(\gamma).
 \end{equation}
\noindent 
\emph{The commutativity hypothesis 
is crucial. }
The commutativity of $\Gamma$ ensures the validity of statements (a) and (b) that follow.
\\(a) The left hand side of equation (2)
 is independent of the choice of $c$.
 \\(b) The space of t-summable functions $a:\G\to\C$ and the map $a\mapsto \underset{\gamma\in\Gamma}{\Sigma}a(\gamma)$  satisfy Axioms (i) and (ii).

 In particular,
if the series $\underset{\gamma\in\Gamma}{\Sigma}a(\gamma)$ is t-summable, then
the series $\underset{\gamma\in\Gamma}{\Sigma}c*a(\gamma)$ is also t-summable,  for every finitely supported 
function $c:\Gamma\to\C$, and equation (1) holds.

The basic example  of a t-summable series for $\Gamma=\Z^k$ that we are concerned with follows.
\\Let  $\T=\{z\in\C:|z|=1\}$.
  \\
 For $n=(n_1,...,n_k)\in\Z^k$ and $z=(z_1,...,z_k)\in \T^k$, we put $z^n=z_1^{n_1}z_2^{n_2}...z_k^{n_k}$.  
\\Given a subset $M\subset \Z^k$ and  $a:M\to\C$, we define $a':\Z^k\to\C$ by
\[a'(n)=a(n)\mbox{ if }n\in M\mbox{ and }a'(n)=0\mbox{ if }n\notin M.\]
When
 $\sum{n\in\Z^k}a'(n)$ is t-summable, we will simply say that $\sum{n\in M}a(n)$
  is t-summable.
  
A function 
$f:\R^k\setminus\{0\}\to\C$ is \emph{homogeneous of type} $(s,\epsilon)\in\C\times\{\pm 1\}$ if 
\begin{equation}\label{homodfn}
f(tx)=t^sf(x)\mbox{ and }f(-x)=\epsilon f(x)\mbox{ for all }t>0,0\neq x\in\R^k
\end{equation}
The theorem that follows is easy. It has been stated because the function $F_f(x,z)$ that emerges, especially its behavior for $x$ in a neighborhood of zero, and
$z=(z_1,z_2,...,z_k)$ in a neighborhood of $(1,1,...,1)\in\T^k$, is the focus of a good part of the paper, specifically in sections 1.2,1.3, 5 and 6.
The remarks following the statement of the theorem explain the situation.
\begin{thm}\label{tcon1prop} Let $f:\R^k\setminus\{0\}\to\C$ be a \df function homogeneous
of type $(-s,\epsilon)$. Then the series 
\begin{equation}\label{tcon1}
F_f(x,z)=\underset{0\neq n\in\Z^k}{\Sigma}f(x+n)z^n\mbox{ is t-summable }
\forall x\in D,\forall z\in\T^k\setminus\{(1,1,...,1)\}
\end{equation}
where $D=\R^k\setminus(\Z^k\setminus\{0\})=(\R^k\setminus\Z^k)\cup\{0\}$.  Furthermore the
sum of this series gives a \df function on the domain $D\times(\T^k\setminus\{1\})$, where $1\in\T^k$ is the multiplicative identity 
$(1,1,...,1)$ of the group $\T^k$.
\end{thm}
The first assertion of the theorem is that the series $F_f(x,z)$  is t-summable. This is implied by the 
absolute convergence of the series $G_{i,N}(x,z):=(1-z_i)^NF_f(x,z)$ whenever $N>>0$. Once this has been checked,  t-summability defines
\\$F_f(x,z):=(1-z_i)^{-N}G_{i,N}(x,z)$ under the assumption that $z_i\neq 1$.  This procedure therefore defines $F_f(x,z)$ when 
$z=(z_1,...,z_k)\in\T^k$ is different from $(1,1,...,1)$. The absolute convergence of $G_{i,N}(x,z)$ relies only on the fundamental theorem of calculus and the
finiteness of the integral $\int_{\R^k}(1+\|x\|)^{-k-1}$. That the sum is \df uses a little more from a first course in Analysis (see Corollary \ref{elem}.)  

Consider the special case $k=1$ and $f(x)=|x|^{-s}$ whenever $x$ is a nonzero real number.  The theorem defines
\[\Sigma_{n=1}^{\infty}|x+n|^{-s}z^n+\Sigma_{n=1}^{\infty}|x-n|^{-s}z^{-n}
\]
as a \df function on the domain $\{(x,z)\in\R\times\T: |x|<1\text{ and }z\neq 1\}$. This is true for every $s\in\C$. Consider next  $z=1$. The series $F(x,1)$  is t-summable if and only if it is absolutely convergent. In particular, its sum is not defined for $\t{Re}(s)\leq 1$ by t-summability.
 For $s\neq 1$, an algebraic definition of $F_f(0,1)$ is given in \S 1.2. 


 
\subsection{h-summability}

With $f$ and $F_f$ as in Theorem \ref{tcon1prop} , 
it would be natural to obtain the `value' of $\sum{0\neq n\in\Z^k}f(n)$  by evaluating
the limit $\underset{z\to 1}{\lim}F_f(0,z)$, but this limit does not exist in general. 
However this problem can be resolved algebraically in the following manner.

  Let $m>1$ be a natural
number. Let $\T^k(m)=\{z\in\T^k:z^m=1\}$.  With $f,F_f$ as is in (\ref{tcon1})
, we have the identity
\[\sum{\lambda\in\T^k(m)}F_f(0,\lambda.z)=m^{k-s}F_f(0,z^m)\,\forall z\in\T^k\mbox{ with }z^m\neq 1\]
We rewrite the above as
\[F_f(0,z)-m^{k-s}F_f(0,z^m)=-\sum{1\neq \lambda\in\T^k(m)}F_f(0,\lambda.z).\]
The term on the right is defined for all $z$ in a neighborhood of $1\in\T^k$. This suggests that
the value of $F_f(0,1)$ can be `forced' by setting $z=1$ in the above formula
when $s\neq k$ by choosing $m\in\N$ so that $m^{s-k}\neq 1$: 

\begin{equation}\label{hintro}
\sum{0\neq n\in\Z^k}f(n)=-(1-m^{k-s})\inverse  \sum{1\neq \lambda\in\T^k(m)}F_f(0,\lambda.)
\end{equation}
The term on the right in equation (\ref{hintro}) is checked be independent
of the choice of $m$. 
The procedure above leads to the definition of h-summability (or h-convergence) of  a series . It is 
in fact an iteration of the method that  defines t-summability. The precise definition
is given in \S2.3. 
The series $\sum{0\neq n\in\Z^k}f(n)$
is h-summable when $(s,\epsilon)\neq (k,1)$ and its sum is given by (\ref{hintro}) when 
$s\neq k$.  When $\epsilon=-1$, the sum of the series is zero.  
 


\subsection{The Poisson formula}  We continue with the $f$ and $F_f$ of Theorem \ref{tcon1prop}. The relation between the sum of
the h-summable series $\sum{0\neq n\in\Z^k}f(n)$ and the
behaviour of $F_f(0,z)$ as $z\in\T$
approaches 1, is given in   Corollary \ref{corintro}. This is obtained from 
Thm. \ref{mainthmintro}, which in turn is deduced from Theorem \ref{mainthm}, the main result of this paper.

We set up the notation for Fourier transforms and the Poisson formula.

\begin{equation}\label{psi-intro}
\psi(x,y)=\exp 2\pi\sqrt{-1}(x_1y_1+x_2y_2+...+x_ky_k)
\end{equation}
for $x=(x_1,...,x_k),y=(y_1,...,y_k)\in\R^k$.
We have the Fourier transform 
\begin{equation}\label{Fintro}
\wh{h}(y)=\int_{\R^k}h(x)\psi(x,y)dx_1dx_2...dx_k\mbox{ for } h\in\abss(\R^k)
\end{equation}
The Poisson formula asserts 
\begin{equation} \Sigma_{n\in\Z^k}h(n)=\Sigma_{n\in\Z^k}\wh{h}(n)
\end{equation}
under certain integrability and regularity assumptions on $h$ (see for instance \S2,Chapter VII of Weil's book \cite{BNT}).  Subtracting $h(0)+\wh{h}(0)$
from both sides, we get a modified  form of the Poisson formula\footnote{\label{2}J-F Burnol's paper \cite{BP} 
has this form of the Poisson formula on p.798; on p.799, he refers  to the work of A.Connes\cite{C} and that of Muntz explained by Titchmarsh
in \S2.11\cite{T}. }

\begin{equation}\label{Poissonmod}
-\wh{h}(0)+\Sigma_{n\in\Z^k\setminus\{0\}}h(n)=-h(0)+\Sigma_{n\in\Z^k\setminus\{0\}}\wh{h}(n)
\end{equation}
that is valid for a wider class of $h$.
\\ \\
We return now to  our \df function $f:\R^k\setminus\{0\}\to\C$ which is homogeneous of type $(-s,\epsilon)$.
It is well known that $f$  extends uniquely as a homogeneous tempered distribution 
(see \cite{H},vol.1,chapter 7) as long as  
\begin{equation}\label{assump}
 (-s,\epsilon)\notin\{(e+k,(-1)^e):e=0,1,2,...\}. 
\end{equation}  

 Its Fourier transform $\wh{f}$ is then \df on $\{0\neq y\in\R^k\}$ and is homogeneous  of type 
 $(-(k-s),\epsilon)$.  
 
 The assumption (\ref{assump}) is equivalent to the assertion that there are no distributions supported at the origin with the same 
 homogeneity degree. The Fourier transform of distributions supported at the origin are polynomials.  Theorem \ref{mainthmintro} requires that we exclude the homogeneity
 degree of  polynomials and their Fourier transforms.
 
We will write
\begin{equation}\label{expt}
\E_k:\R^k\to\T^k\mbox{ for }(x_1,...,x_k)\mapsto (\exp 2\pi\sqrt{-1}x_1,...,\exp 2\pi\sqrt{-1}x_k)
\end{equation}
Theorem \ref{tcon1prop} defines the function $F_f(x,z)$ as the sum of a t-summable series for all $1\neq z\in\T^k$ and all $x\in D$ where 
$D=\R^k\setminus(\Z^k\setminus\{0\})$. Thus  $F_f(x,\E_ky)$ is defined for all $y\in\R^k\setminus\Z^k=D\setminus\{0\}$. We therefore have
the function
\begin{equation}\label{LHS}
-\psi(x,y)\wh{f}(y)+F_f(x,\E_ky)\text{ defined when }(x,y)\in D\times(D\setminus\{0\})
\end{equation}
and the same consideration applied to $\wh{f}$ yields
\begin{equation}\label{RHS}
-f(x)+\psi(-x,y)F_{\wh{f}}(y,-\E_kx)\text{ defined when }(x,y)\in (D\setminus\{0\})\times D
\end{equation}
The Poisson formula as presented in equation (\ref{Poissonmod}), when applied to the function $h$ 
given by  $h(w)=f(w+x)\psi(w,y)$ yields the equality of the functions defined in (\ref{LHS}) and (\ref{RHS}) on the region $(D\setminus\{0\})\times(D\setminus\{0\})$.
However, the absence of absolute convergence, and the singularity at $0\in\R^k$, of $f$ or $\wh{f}$ (or both)
presents a problem that has to be overcome. This statement for $\R$ (rather than $\R^k$) is essentially the functional equation of the 
Lerch zeta function--see equations (1.4), page 161 and (2.4), page 163 of Apostol's paper \cite{A1}.
The desired equality, now on $\R^k$, extends the domain of the functions given in (\ref{LHS}) and (\ref{RHS}) to $D\times D\setminus\{(0,0)\}$. 
Part (1) of the theorem below goes further and extends the domain to $D\times D$.

 \begin{thm}\label{mainthmintro} Let $f:\R^k\setminus\{0\}\to\C$ be \df and homogeneous of type $(-s,\epsilon)$. Assume that both $(-s,\epsilon)$ and
 $(-(k-s),\epsilon)$ satisfy (\ref{assump}). Then there is a \df function \linebreak
 $F_f^{reg}:D\times D\to\C$ with the following properties
 
 \begin{enumerate}
\item $F_f^{reg}$ agrees with the functions given in (\ref{LHS}) and (\ref{RHS}) on their respective domains,
\item $F_f^{reg}(0,0)$ equals $\sum{0\neq n\in\Z^k}f(n)$ as defined by equation 
(\ref{hintro})
\item $ \sum{0\neq n\in\Z^k}f(n)=\sum{0\neq n\in\Z^k}\wh{f}(n) $
\end{enumerate}

 \end{thm}
 
Part (3) follows from interchanging the roles of $f$ and $\wh{f}$. When $k=1$, part (3 ) is simply the functional equation of the Riemann zeta function. 

Part (1) of the theorem applied to the function in (\ref{LHS}) with $x=0$ combines with part (2) of the theorem to give: 
\begin{cor}\label{corintro}
$y\mapsto F_f(0,\E_ky)-\wh{f}(y)$ defined for $0\neq y\in D$ extends to
 a \df function on $D$ whose value at zero is given by $\sum{0\neq n\in\Z^k}
f(n)$.

\end{cor}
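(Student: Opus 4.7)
The plan is to deduce the corollary immediately from parts (2) and (3) of Theorem \ref{mainthmintro}, specialized to the slice $x=0$. The key observation is that $\psi(0,y)=1$ identically, so the left-hand side in part~(1),
\[ F_f(x,\E_k y)-\psi(-x,y)\hat{f}(y), \]
reduces at $x=0$ to exactly the function of interest, namely $F_f(0,\E_k y)-\hat{f}(y)$.

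First I would invoke part~(2) of the theorem: it furnishes a \df function $F_f^{\reg}\colon D\times D\to\C$ which agrees on $D\times(D\setminus\{0\})$ with the displayed expression. Setting $x=0$ yields a \df function $y\mapsto F_f^{\reg}(0,y)$ on all of $D$ that coincides with $y\mapsto F_f(0,\E_k y)-\hat{f}(y)$ for every $y\in D\setminus\{0\}$. This is precisely the smooth extension asserted by the corollary.

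Next, I would read off the value at $y=0$ from part~(3), which asserts $F_f^{\reg}(0,0)=\sum{0\neq n\in\Z^k}f(n)$, with the sum interpreted in the h-summable sense of (\ref{hintro}). Combined with the previous step, this supplies both the smooth extension and the identification of its value at the origin, giving the corollary.

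No substantive obstacle arises at this stage: the entire analytic content---the existence of a \df extension across the single problematic point $(0,0)\in D\times D$, together with the identification of its value---has already been absorbed into Theorem \ref{mainthmintro} itself. The only items to verify are the trivial identity $\psi(0,y)=1$ and the specialization $x=0$. Any conceptual difficulty is entirely delegated to the theorem, whose proof must in particular show that the two functions in part~(1), whose natural domains only overlap on $(D\setminus\{0\})\times(D\setminus\{0\})$, glue together to a single \df function on all of $D\times D$.
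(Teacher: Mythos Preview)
Your proposal is correct and matches the paper's own argument exactly: the paper derives the corollary by specializing part~(2) of Theorem~\ref{mainthmintro} to $x=0$ (using $\psi(0,y)=1$) and then invoking part~(3) for the value at the origin.
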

We think of the above corollary as the analytic significance of the algebraically
defined sum.
  
  The
environment of Theorem \ref{mainthmintro} for $k=1$
 has been subjected to extensive study.
Our functions $F_f(x,\exp 2\pi\sqrt{-1})y)$ are
closely related to the Lerch zeta function
\[\phi(y,x,s)=\overset{\infty}{\sum{n=0}}(x+n)^{-s}\exp 2\pi\sqrt{-1}ny
\]
whose properties have been investigated in recent times by
J.Lagarias and W.Li. The reader can gather its history from 
their paper \cite{LL}. The functional equation of the Lerch zeta function was proved by Lerch in
1887. Another proof of this theorem is given by 
Apostol \cite{A1} where he follows Riemann's
method. As is well known, this method begins by 
expressing  $\int_0^{\infty}$ as $\int_0^1+\int^{\infty}_1$, then converts
 $\int_0^1$ to $\int_1^{\infty}$ by a change of variables, and proceeds. 
We do not
follow this method, and rely
instead on conventional methods in distributions, namely 
the use of cut-off functions.
A proof of the functional equation of the zeta function that also relies
on distributions, similar but not identical to ours, is due
to S.Miller and W.Schmid in a paper \cite{MS} that  
has far wider goals
and applications.  The paper \cite{GL} of P.Gerardin and W.Li interprets
the functional equation of the zeta function as an equality of tempered
distributions. The recent paper \cite{B} by Burnol also studies
 the Poisson
formula for tempered distributions. 

\subsection{Analytic continuation}\noindent
The complex number `$s$' that has appeared so far has been a constant. In this subsection, `$s$' is a variable. Sums of t-summable and h-summable series 
turn out to be meromorphic functions of $s$ when the terms of the series are holomorphic functions of $s$. We give some instances.
\\  \\\textbf{Example t1.} Let $f:\R^k\setminus\{0\}\to(0,\infty)$ be a \df function such that $f(tx)=tf(x)$ for all $t>0$ and for all $x\in\R^k\setminus\{0\}$
\\ \emph{Then the series $\Sigma_{n\in\Z^k}'z^nf(n)^s$ is t-summable for all $1\neq z\in\T^k$, and all $s\in\C$. The sum of the series is 
 \df$\,$
   in $z$ and holomorphic in $s$. } 
\\This is proved in Corollary \ref{corhomholt}

 Example t1 contains the two ancient examples of analytic continuation stated below.
\\ \textbf{Example t2.}  \emph{The series $\Sigma_{n=1}^{\infty}z^nn^{-s}$ is t-summable for all $1\neq z\in\T$ and for all $s\in\C$. The sum of the series is \df$\,$  in $z$ and holomorphic in $s$.}

In particular,  the familiar series $\Sigma_{n=1}^{\infty}(-1)^{n}n^{-s}$ is t-summable for all $s\in\C$, and its  sum is holomorphic in $s$. Our
t-summability method here is no different from the finite difference method employed by J.Sondow (see \cite{S}) to give an elementary proof of this  well-known statement.
\\ \textbf{Example t3.} The Dirichlet's L-series $L(s,\chi)=\Sigma_{n=1}^{\infty}\chi(n)n^{-s}$ 
is t-summable for every \emph{nontrivial} Dirichlet
character $\chi$, and its sum is an entire function of $s$.
\\This is deduced from linear combinations of Example t1. \\The series that defines the Riemann zeta function however is t-summable if and only if is absolutely
summable. Nevertheless it is h-summable when $s\neq 1$.
\\ \\ \textbf{Example h1.}  \emph{ With $f$ as in Example t1, the series $\Sigma'_{n\in\Z^k} f(n)^{-s}$ is h-summable for all $k\neq s\in\C$. 
The sum of this series gives a holomorphic function on the region $k\neq s\in\C$. At $s=k$,
this function has a simple pole, or is holomorphic.} 

This statement is contained in Proposition \ref{prophomholh}.

Taking $f=\sqrt{Q}$ where $Q$ is a positive definite quadratic form, we obtain the analyticity of the Epstein zeta function.  But unlike the
situation for the Epstein zeta function, there is no functional equation in  general. The Fourier transform of $f^{-s}$ is not $g^{s-k}$ times a function of $s$.
Thus the proof for example h1 is structurally different 
from the classical proof of the analytic continuation of the Epstein zeta function which relies on the Poisson formula.

Homogeneity in Example h1 can be replaced by suitable bounds on partial derivatives.
The example below illustrates this point. We first set up the required notation.
\\Let $P=P_0+P_1+...+P_d$, where the $P_i$ are homogeneous polynomials of degree $i$
on $\R^k$. Assume that $P_d(x)>0$ for all $0\neq x\in\R^k$. We then have
a finite subset $F\subset\Z^k$ such that $P(n)>0$ for all $n\in\Z^k,n\notin F$.
We denote by $\Sigma'$ the sum taken over $\Z^k\setminus F$. 
\\ \textbf{Example h2.} \emph{ With assumptions and notation as stated, the the series
$\Sigma'P(n)^{-s}$ is h-summable for all $s\in\C,s\notin E$, where
$E=\{\frac{k-j}{d}:j\in 2\Z,j\geq 0\}$. The sum of this series, denoted
by $G(s)$, is holomorphic on $\C-E$, and has simple poles at worst on $E$. For $k$ odd and for an integer $s\leq 0$, 
the finite
sum $\underset{n\in F}{\Sigma}P(n)^{-s}$ equals $-G(s)$}.

Example h2 is contained in Theorem \ref{special}. The last assertion  gives a generalization of the values of the Riemann
zeta function at $0,-2,-4,...$. 

\subsection{}The plan of the paper is as follows. The next section has the definitions
of t-summability and h-summability. Sections 3 and 4 are concerned with t-summable series and h-summable series respectively. Sufficient conditions for testing
summability are given. It is also shown the sum of a series is well behaved with respect to parameters. Theorem \ref{tcon1prop} and the analytic continuation results
mentioned in \S1.4 are proved in these sections. 

Section 5 is essentially a review of the Poisson
formula for distributions based on ideas borrowed from \cite{acta},\cite{LV}
 and \cite{M}, followed by some simple analysis of the singularities.
This discussion is applied to a class of distributions $\HH(\R^k)$
defined in section 6. The main result there (and of the paper) is Theorem \ref{mainthm}, which is valid for a considerably wider class of functions than those that 
appear in Theorem. \ref{mainthmintro}. 
Homogeneity is not required-- every extension of $f$ as a distribution on $\R^k$ `forces' a definition of $\Sigma'_{n\in\Z^k} f(n)$.
This section closes with a definition of t-integrability of distributions\footnote{\label{3}Akshay Venkatesh points out that
 Kudla and Rallis have more sophisticated versions and applications of translation-invariance in \cite{KR}}
on $\R^k$. 

The
last section discusses the more general definition of the sum of a
series indexed by a set $X$ which a group $G$ acts.
The $H$-summability definition of \S7 
is more stringent than h-summability. It has the advantage of getting
 rid of the anomalous behaviour of h-summability in part (2) of Theorem \ref{special} and  Example \ref{insecure}. The disadvantage of $H$-summability is that  the
 series that defines the Riemann zeta function  is \emph{not} $H$-summable  for $s=-1,-3,-5,...$, whereas this series is h-summable for all $s\neq 1$.

A word of apology regarding notation. The
Euclidean space figuring in sections 3 and 4 is invariably $\R^k$. Sections
5 and 6 have finite dimensional real vector spaces $X$ and $X'$, and the
 `$k$'s of those sections are variable non-negative integers. Finally,
in sections 2 and 7, $X$ denotes a set equipped with the action of a group $G$. 

\section{definition of $t$-summability and $h$-summability}
\subsection{The canonical extension}\label{canext} Given 
\\(a)a commutative ring $A$, a field $k$, and   
a ring homomorphism $\epsilon:A\to k$ where $k$ is a field,
\\(b) an inclusion $M\hookrightarrow N$ of left $A$-modules, and 
\\(c) a left $A$-module homomorphism $I:M\to k$, and

we define its \emph{canonical extension} $I_c:M_c\to k$  as follows.

Let $S=\{a\in A:\epsilon(a)\neq 0\}$. This is a multiplicative subset of $A$.
\\We obtain $S^{-1}I:S^{-1}M\to k$. Let  $i:N\to S^{-1}N$ denote 
the homomorphism $n\mapsto\frac{n}{1}$ for all $n\in N$. Note that
$S^{-1}M$ is contained in $S^{-1}N$. We define $M_c$ by
\begin{equation}\label{canextM}
M_c=\{n\in N:i(n)\in S^{-1}M\}=\{n\in N:\exists s\in S \mbox{ so that }sn\in M\}
\end{equation}
Now $i:N\to S^{-1}N$ restricts to $j:M_c\to S^{-1}M$. Denoting by $I_c$
the composite
\begin{equation}\label{canextI}
M_c\xr{j}S^{-1}M\xr{S^{-1}I}k
\end{equation}
we obtain $I_c:M_c\to k$ with the property $I=I_c|_M$. 

\emph{Then $I_c:M_c\to k$ will be referred to as the canonical extension of the
$A$-module homomorphism $I:M\to k$ }.

\subsection{t-summability} 
\noindent
\\The canonical extension is now applied to the following situation:
\\(a)  $A=\C[\G]$, the group-algebra of a discrete abelian group $\G$, the field $k$ equals $\C$, and the augmentation $\epsilon:A\to k$ takes
$\Sigma_{\g\in\G}c_{\g}\g\in A$ to $\Sigma_{\g\in\G}c_{\g}$.
\\(b) $M=\abss(\G)$, the collection $f:\G\to\C$ satisfying  $\underset{\g\in\G}{\Sigma}|f(\g)|<\infty$. 
\\$N=\C^{\G}$ is the space of all $\C$-valued functions on $\G$.  The collection of finitely supported $f\in N$ is $A$. Therefore 
we have $A\subset M\subset N$. 
\\The convolution $(f_1*f_2)(\g)=\Sigma_{\g'\in\G}f_1(\g')f_2(\g'^{-1}\g)$ is defined when $f_1\in A$ and $f_2\in N$. This gives $N$ the
structure of a $A$-module. 
\\If $f_1,f_2\in M$, then $f_1*f_2$ is defined and belongs to $M$. This gives $M$ the structure of a $\C$-algebra, and $A$ is then 
a $\C$-subalgebra of $M$. In particular, $M$ is a $A$-submodule of $N$.
\\(c) $I:M\to\C$  is given by $I(f)=\Sigma_{\g\in\G}f(\g)$. Now $I$ is a ring homomorphism, and $I|_A=\epsilon$. It follows that 
$I$ is a $A$-module homomorphism.
\\ \emph{The  enlarged domain $M_c$  will be denoted by $\ts(\G)$. The canonical extension $I_c$ will be denoted by $\Sigma_{\t{t}}:\ts(\G)\to\C$, the notation 
$\Sigma_{\t{t}}$ denoting ``t-summation''.  A function $f:\G\to\C$ is t-summable if and only if $f$ belongs to $\ts(\G)$.} 

Let $f\in \ts(\G)$, let $\g'\in\G$, and consider  the $\g'$-translate $f'$ of $f$ given by $f'(\g)=f(\g+\g')$. Then $f'$ also belongs to $\ts(\G)$,
and $\Sigma_{\t{t}}f=\Sigma_{\t{t}}f'$.

This definition of t-summability is no different from the one given in the
introduction. It is important that $\G$ is an abelian group.
\begin{lem}\label{tstab} Let $\G_1$ be a subgroup of an abelian group $\G_2$. Given a function $a:\G_1\to\C$, we extend it by zero to obtain $a':\G_2\to\C$. Then 
$a'$ is t-summable if and only if $a$ is t-summable. 
\end{lem} 
\begin{proof}
The `if' part is straight from the definition. To see the converse, assume that $a'$ is t-summable. We have $s\in\C[\G_2]$ with $\epsilon(s)\neq 0$
such that the convolution $s*a'$ belongs to $\abss(\G_2)$.  Both properties of $s$ are preserved when $s$ is replaced by a translate of $s$. Thus we may assume 
that $\epsilon(s|\G_1)$ is nonzero. The restriction of $s*a'$ to $\G_1$ is $(s|\G_1)*a$. The latter is therefore absolutely summable. It follows that $a$ is t-summable.
\end{proof}
\subsection{h-summability} Here, $R$ is a commutative ring, and $\Gamma$ 
is a $R$-module. 

In order to set up the data for the canonical extension, we observe first that $R\units$, the
group of units of $R$, operates on $\G$ and therefore on the space of functions $\C^{\G}$
as well. It is a simple
consequence of the definition of t-summability that 
\\(i) $\ts(\G)\subset\C^{\G}$ is a $R\units$-submodule, and (ii) $\Sigma_{\t{t}}$ is a $R\units$-invariant functional. 
We now have:
\\(a) $A=\C[R\units]$, $k=\C$, and $\epsilon:A\to\C$ is the augmentation of the group-algebra,
\\(b) $M=\ts(\G)$ and $N=\C^{\G}$,
\\(c)  $I:M\to\C$ is given by $\Sigma_{\t{t}}:\ts(\G)\to\C$.
 
 The canonical extension $I_c:M_c\to\C$ associated to the data specified 
 will be denoted by 
$\Sigma_{\t{h}}:\hs(\G)\to\C$. A function $f:\G\to\C$ is said to be h-summable if and only if $f\in\hs(\G)$.

We summarise the properties of the above construction in the proposition below.

\begin{prop}\label{summary}
Let $\Gamma$ be a $R$-module, where $R$ is a commutative ring. Then
\\(a)  The spaces $\abss(\Gamma)\subset \ts(\Gamma)\subset 
\hs(\Gamma)\subset \C^{\Gamma}$ are all stable under the natural action of $R\units$. 
\\(b) $\Sigma_{\t{h}}:\hs(\Gamma)\to\C$ is a $R\units$-invariant
linear functional.
\\ (c) The space $\ts(\Gamma)$ is stable under translation by $\Gamma$;
the restriction of $\Sigma_{\t{h}}$ to $\ts(\Gamma)$ is a $\Gamma$-invariant linear functional.
\end{prop}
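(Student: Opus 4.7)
The plan is to recognize this proposition as a direct specialization of the canonical extension construction laid out in \ref{preliminary}. I would take $G$ to be the semidirect product $\Gamma \rtimes R\units$ with $n=2$, $H_1 = \Gamma$ and $H_2 = R\units$, together with the inclusion $M = \abss(\Gamma) \hookrightarrow N = \C^{\Gamma}$ and the absolute summation functional $I = \Sigma_{\Gamma}$. Once the hypotheses of \ref{preliminary} are verified in this specific setup, the proposition follows by translating the output of that general construction back into the present notation.

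First I would verify the normalizer condition that $H_j$ normalizes $H_i$ for $i<j$, which here reduces to $R\units \subset N(\Gamma)$. This is immediate from the definition of a semidirect product, since $R\units$ acts on $\Gamma$ by group automorphisms. Consequently $G_1 = H_1 H_2 = G$ and $G_2 = H_2 = R\units$. Next I would check that the initial data $(M \hookrightarrow N, I)$ has the required equivariance. Both $\Gamma$-translation and the natural action of $R\units$ are bijections of the index set, hence preserve absolute summability, so $\abss(\Gamma) \subset \C^{\Gamma}$ is a $\C[G]$-submodule. The functional $\Sigma_{\Gamma}$ is $\Gamma$-invariant by the standard translation-invariance of absolutely convergent sums, and $R\units$-invariant because reindexing an absolutely convergent sum by a bijection of its index set does not change its value. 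These two invariances together amount to the $\C[G]$-linearity of $\Sigma_{\Gamma}$ required as input for the general construction.

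With these verifications in place, the iterated canonical extension of \ref{preliminary} delivers the chain $\abss(\Gamma) \subset \abss_t(\Gamma) \subset \abss_h(\Gamma) \subset \C^{\Gamma}$, in which $\abss_t(\Gamma) = M_1$ is a $G_1 = G$-module (hence stable under both $\Gamma$ and $R\units$) while $\abss_h(\Gamma) = M_2$ is a $G_2 = R\units$-module (hence stable under $R\units$). The construction also produces invariant extensions $I_1:\abss_t(\Gamma) \to \C$ and $I_2:\abss_h(\Gamma)\to\C$ of $\Sigma_{\Gamma}$, with $I_1$ being $G$-invariant (and in particular $\Gamma$-invariant) and $I_2$ being $R\units$-invariant. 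Reading these conclusions off is exactly the statement of the proposition.

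There is no substantive obstacle here: the work is entirely done by the general formalism of \ref{preliminary} once the initial $G$-equivariance of $\Sigma_{\Gamma}$ on $\abss(\Gamma)$ is observed. The only point to watch is to remember that $R\units$-invariance of $\Sigma_{\Gamma}$ must already hold at the absolutely summable stage in order to begin the canonical extension chain for h-summability; but since the $R\units$-action is by bijections of the index set, this is automatic.
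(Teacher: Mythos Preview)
Your proposal is correct and is precisely the paper's approach: the proposition is stated in the paper as a summary of the preceding discussion in \ref{preliminary}, with no separate proof given, and you have simply spelled out the verifications (normalizer condition, $G$-equivariance of $\Sigma_{\Gamma}$ on $\abss(\Gamma)$, identification of $G_1=G$ and $G_2=R\units$) that the paper leaves implicit.
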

In part (c) of the Proposition, the restriction of $\Sigma_{\t{h}}$ to $\ts(\Gamma)$ is of course $\Sigma_{\t{t}}$.
\subsection{Instability of h-summability}
\begin{ex}\label{instab1}
The space of h-summable functions on $\G$ need not be
stable under translations by $\G$. To see this, let
$R=\G$ be a subring of $\C$. 
\\ \emph{Claim 1}:
$f(x)=x$ for all $x\in\G$ is h-summable. 
\\Proof: because $\frac{1}{a}f(ax)-f(x)=0$ for all $a\in R\units$, we see that
$s.f=0$ where $s=\frac{1}{a}[a]-[1]\in\C[R\units]$.  Now $\epsilon(s)=\frac{1}{a}-1$ is nonzero if $a\neq 1$. Taking $a=(-1)$ we see that $f$ is h-summable.
\\ \emph{Claim 2}: the constant function $v(x)=1$ for all $x\in\G$ is not h-summable. 
\\Proof: If $v$ is h-summable, there is some $s\in\C[R\units]$ with $\epsilon(s)\neq 0$
such that $s.v$ is t-summable. This in turn implies that there is some $s'\in\C[\G]$ with $\epsilon(s')\neq 0$ such that $s'.s.v$ is absolutely summable. But $s'.s.v$ is the 
nonzero constant function $\epsilon(s')\epsilon(s)$, and so we get a contradiction.

The claims combine to show that $f$ is h-summable, but its translate $f(x+1)=f(x)+v(x)$ is not h-summable.
\end{ex}

\begin{rem}\label{instab} Now let $\G_2$ be a $R_2$-submodule, let $R_1$ be a subring of $R_2$, and let $\G_1$ be a $R_1$-submodule of $\G_2$. Let
$a':\G_2\to\C$ denote the extension by zero of $a:\G_1\to\C$.
The h-stability of $a'$ does not imply the h-stability of $a$. Take, for example, $R_1=\G_1=\Z$ and $R_2=\G_2=\Q$,
and let $a$ be the constant function $1$ on $\Z$.
It has been remarked in Example \ref{instab1} that $a$ is not h-summable. But it will be seen later (in Proposition \ref{hpol})
that $a'$ is h-summable, and also that $\Sigma_{\t{h}}a'=0$.  

The fact that the space of h-summable functions grows when $(\G_1,R_1)$
is replaced by $(\G_2,R_2)$ is to our advantage, because it defines the sum
of more series in a canonical manner. The example below shows however that
this sum has to be treated with caution.

\end{rem}

\begin{ex}\label{insecure} We observe here that even when the translates of a h-summable $f$ are h-summable, the sum  is not translation invariant.

Take $\R=\G=\Q$. Consider the function $f:\Q\to\C$ which takes the value 1 on the set of natural numbers, and is zero elsewhere. By Prop. \ref{hpol}, we see that
$f(x)+f(-x)$ is h-summable, and its sum is (-1). Because $s=[-1]+[1]\in\C[\Q]$ satisfies $\epsilon(s)=2$, we deduce that $f$ is h-summable, and $\Sigma_{\t{h}}(f)=\frac{-1}{2}$. 
Now $g(x)=f(x+1)$ is h-summable because  $f-g$  has finite support. But 
$\Sigma_{\t{h}}(f)=\Sigma(f-g)+\Sigma_{\t{h}}(g)=-1+\Sigma_{\t{h}}(g)$.  Thus $\Sigma_{\t{h}}f(x)=-1+\Sigma_{\t{h}}f(x+1)$.

\end{ex}
\subsection{An inductive definition} We apply the canonical extension repeatedly to 
the following situation (which contains the earlier summability definitions).  

Let $G$ be a group equipped with 
\emph{commutative} subgroups 
 $H_1,H_2,...,H_n$ with the 
  property that $H_j$ is contained in the normaliser of $H_i$ whenever $1\leq i<j\leq n$. It follows
  that 
  \\(i) $G_k=H_kH_{k+1}...H_n$ is a subgroup of $G$, and (ii) $H_k\triangleleft G_k$ 
  for all $k=1,2,...,n$. 
\\Next, let $N$ be a left $\C[G]$-module, let $M\subset N$ be a $\C[G]$-submodule,
and let $I_0:M\to\C$ be a $G$-invariant linear functional.

We define inductively, 
\\(a)$M=M_0\subset M_1\subset ...\subset M_n\subset N$ such that $M_k$ is 
a representation of $G_k$,
\\(b) a $G_k$-invariant linear functional $I_k:M_k\to\C$ such that
the restriction of $I_k$ to $M_{k-1}$ is $I_{k-1}$ for all $1\leq k\leq n$.
\sloppypar
Let $G_0=G$. Assume that
 the $G_k$-submodule $M_k\subset N$ and the $G_k$-invariant
linear functional 
$I_k:M_k\to\C$ have been defined for some $0\leq k<n$. We regard $M_k,N$
as $H_k$-representations, and $I_k$ as a $H_k$-invariant linear functional.
Because $H_k$ is commutative, the canonical extension of $I_k$
is defined, and denoted
by
\linebreak
 $I_{k+1}:M_{k+1}\to\C$.Because $H_k$ is normal in $G_k$, it follows that
$M_{k+1}$ is $G_k$-stable, and $I_{k+1}$ is $G_k$-invariant. The pair
$(M_{k+1},I_{k+1})$ enjoys the same properties when $G_k$ is replaced by
a subgroup of $G_k$. Taking the subgroup to be $G_{k+1}$ 
completes the induction.

Note
that $M_n$ is a $H_n$-representation, and $I_n:M_n\to\C$ is $H_n$-invariant.

In practise, $N=\abss(X)$ and $M=\C^X$, and $I_0=\Sigma=\Sigma_X$.
Here $X$ is a set on which $G$ operates.

h-summability is contained in the case $n=2$.
\section{$\t{t}$-summability}
\noindent
A commutative discrete group $\G$ remains fixed throughout this section.
The ring structure of the group algebra $\C[\G]$ is given by convolution.
\emph{For $d\in\C[\G]$, we will denote the $n$-th power of $d$ in this ring,
 namely $d*d*...*d$ ``$n$ times''  by $d^{(n)}$}.
\\ \\ $\T=\{z\in\C;|z|=1\}$. 
$\G^*=\t{Hom}(\G,\T)$ is the Pontrjagin dual of a commutative group $\G$. Given functions 
$a,b:\G\to\C$, pointwise multiplication gives $a.b:\G\to\C$ and convolution (when defined)
gives another function $a*b:\G\to\C$.  
Given $a:\G\to\C$, we shall frequently say that
the series $\sum{\g\in\G}a(\g)$ is t-summable, or simply that $a$ is t-summable, when $a\in\ts(\G)$.
\emph{The sum of the series, denoted by $\Sigma_{\t{t}}a$ in the previous section, will now be referred to  
simply as $\Sigma a$}. The lemma
below demonstrates that t-summability behaves well.
   \begin{lem}\label{obvious}
    Let $a:\Gamma\to\C$ be a function and let $\chi_0\in\Gamma^*$. 
Assume that the series $\Sigma(a.\chi_0)$ is t-summable. 
Then the series $\Sigma(a.\chi)$ is t-summable for all $\chi$ in some neighborhood $U$ of $\chi_0$ in $\Gamma^*$.
Furthermore,  $\chi\mapsto \Sigma(a.\chi)$ gives  a continuous function on $U$.
 
 \end{lem}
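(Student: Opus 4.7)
The plan is to transport a witness of t-summability of $a.\chi_0$ to a witness for $a.\chi$ when $\chi$ is near $\chi_0$ by twisting with the character $\eta := \chi\chi_0\inverse$, and then read off continuity from the explicit formula for the sum given in the introduction.

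By hypothesis there is a finitely supported $c:\G\to\C$ with $\sigma := \sum{\alpha\in\G}c(\alpha)\neq 0$ and $c*(a.\chi_0)\in\abss(\G)$. For $\chi\in\G^*$ set $\eta=\chi\chi_0\inverse$ and define the finitely supported function $c_\eta:\G\to\C$ by $c_\eta(\alpha)=c(\alpha)\eta(\alpha)$. Using that $\eta$ is a homomorphism to $\T$, so that $\eta(\alpha)\eta(\gamma-\alpha)=\eta(\gamma)$, a direct computation gives the key identity
\[
\bigl(c_\eta*(a.\chi)\bigr)(\gamma)=\eta(\gamma)\bigl(c*(a.\chi_0)\bigr)(\gamma)\qquad\text{for all }\gamma\in\G.
\]
Since $|\eta|\equiv 1$ on $\G$, the right-hand side is absolutely summable, so $c_\eta$ will witness t-summability of $a.\chi$ as soon as $\sum{\alpha\in\G}c_\eta(\alpha)\neq 0$. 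But $\chi\mapsto\sum{\alpha\in\supp c}c(\alpha)\eta(\alpha)$ is a finite $\C$-linear combination of evaluation maps on $\G^*$, hence continuous in $\chi$, and it takes the value $\sigma\neq 0$ at $\chi=\chi_0$. Consequently there is an open neighborhood $U$ of $\chi_0$ in $\G^*$ on which it does not vanish, and $a.\chi$ is t-summable for every $\chi\in U$.

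For $\chi\in U$, the definition of t-summability delivers the explicit formula
\[
\Sigma(a.\chi)=\Bigl(\sum{\alpha\in\G}c(\alpha)\eta(\alpha)\Bigr)\inverse\sum{\gamma\in\G}\eta(\gamma)\bigl(c*(a.\chi_0)\bigr)(\gamma).
\]
The denominator depends continuously on $\chi$ and is nonvanishing on $U$. In the numerator each summand is continuous in $\chi$ and is bounded in absolute value by $|(c*(a.\chi_0))(\gamma)|$, a quantity summable over $\G$ and independent of $\chi$; the Weierstrass $M$-test therefore gives uniform convergence on $U$, so the numerator is continuous in $\chi$. Dividing yields continuity of $\chi\mapsto\Sigma(a.\chi)$ on $U$. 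There is no serious obstacle in the argument; the only point requiring care is the convolution identity above, which is a one-line consequence of the multiplicativity of $\eta$.
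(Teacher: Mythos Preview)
Your proof is correct and is essentially the same as the paper's: both twist the witness $c$ by the character $\eta=\chi\chi_0^{-1}$, use the identity $(c.\eta)*(a.\chi)=\eta\cdot\bigl(c*(a.\chi_0)\bigr)$ (which the paper phrases as ``convolution commutes with pointwise multiplication by a character''), and conclude continuity from the quotient $g(\eta)/h(\eta)$ where $h(\eta)=\Sigma(c.\eta)$ and $g(\eta)=\Sigma\bigl((c*(a.\chi_0)).\eta\bigr)$. Your Weierstrass $M$-test argument is just the standard reason that $g$ is continuous.
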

 \begin{proof} The t-summability of $a.\chi_0$ says there is some 
$c\in\C[\Gamma]$, thus $c$ is a function on $\Gamma$ with finite support,
so that
 \[\mbox{(i) } \Sigma c\neq 0\mbox{  and (ii) } b=c*(a.\chi_0)\in\abss(\Gamma).\]
 
 Let $h(\chi)=\Sigma (c.\chi)$  and $g(\chi)=\Sigma (b.\chi)$ for all $\chi\in \Gamma^*$.
 Both $g$ and $h$  are continuous functions on $\G^*$. Let $V$ be the open subset
 of $\Gamma^*$  given by  $h(\chi)\neq 0$. By assumption (i) above, $1\in V$. 
 Now convolution commutes with pointwise multiplication by a character. 
 From this we see that $a.\chi_0.\chi$ is t-summable when $\chi\in V$ and
 also that $\Sigma(a.\chi_0.\chi)=g(\chi)/h(\chi)$ for all $\chi\in V$.
  
 \end{proof}\subsection{Sufficient conditions for t-summability}

\emph{Recall that the $n$-th power of $d$  in the group-algebra $\C[\G]$ is denoted by $d^{(n)}$ .}
\sloppy
\begin{lem}\label{fd} 
Let $ c=\Sigma_{i=1}^{i=k}(1-\g_i)*(1-\g_i\inverse)\in\C[\G]$. Assume that
$\g_1,\g_2,...,\g_k$ are generators of $\G$. Let $a:\G\to\C$ be a function.  For the statements
below,  (1)$\implies$ (2)$\implies$ (3) 
 \begin{enumerate}
\item  there is a natural number $n$ so that  $(1-\g_i)^{(n)}*a\in\abss(\G)$ for all $i=1,...,k$.
\item  there is a natural number $n$ so that $c^{(n)}*a\in\abss(\G)$
\item $a.\chi$ is t-summable for all $1\neq \chi\in\G^*$.
\end{enumerate}

\end{lem}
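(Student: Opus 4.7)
The plan is to handle the two implications by standard manipulations in the commutative ring $\C[\Gamma]$, using two key facts: (i) convolution commutes with pointwise multiplication by a character, i.e.\ $(d*b).\chi = (d.\chi)*(b.\chi)$, and (ii) convolution by a finitely supported function preserves $\abss(\Gamma)$.

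For the implication (1)$\implies$(2), commutativity of $\C[\Gamma]$ permits the multinomial expansion
\[
c^{(N)} = \underset{m_1+\cdots+m_k=N}{\Sigma}\binom{N}{m_1,\ldots,m_k}\prod_{i=1}^{k}(1-\gamma_i)^{(m_i)}*(1-\gamma_i^{-1})^{(m_i)}.
\]
I would pick $N = k(n-1)+1$. For any multi-index $(m_1,\ldots,m_k)$ with $m_1+\cdots+m_k = N$, pigeonhole forces some $m_i \geq n$, so that term factors as $(1-\gamma_i)^{(n)}*d$ for a finitely supported $d$. Convolving with $a$, hypothesis (1) together with fact (ii) shows every such term lies in $\abss(\Gamma)$, and hence so does the finite sum $c^{(N)}*a$.

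For (2)$\implies$(3), fix $1\neq\chi\in\Gamma^*$. Using fact (i),
\[
(c^{(n)}.\chi)*(a.\chi) = (c^{(n)}*a).\chi,
\]
and the right-hand side is in $\abss(\Gamma)$ because $|\chi|\equiv 1$. So the finitely supported function $d := c^{(n)}.\chi$ is the natural candidate to witness the t-summability of $a.\chi$, and the only remaining point is to check $\Sigma d \neq 0$. A direct computation, using that $\Sigma(e*f) = (\Sigma e)(\Sigma f)$ for finitely supported $e,f$ and that $\Sigma((1-\gamma).\chi) = 1-\chi(\gamma)$, gives
\[
\Sigma(c^{(n)}.\chi) = \left(\Sigma_{i=1}^{k}(1-\chi(\gamma_i))(1-\overline{\chi(\gamma_i)})\right)^{\!n} = \left(\Sigma_{i=1}^{k}|1-\chi(\gamma_i)|^2\right)^{\!n}.
\]
This is strictly positive unless $\chi(\gamma_i)=1$ for all $i$; but then, since $\gamma_1,\ldots,\gamma_k$ generate $\Gamma$, $\chi$ would equal the trivial character, contradicting $\chi\neq 1$.

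I do not foresee a substantive obstacle. The whole argument turns on the symmetric choice of $c$ as a sum of \emph{norm squares} $(1-\gamma_i)*(1-\gamma_i^{-1})$, which is designed precisely so that $\underset{\gamma\in\Gamma}{\Sigma}c(\gamma)\chi(\gamma)$ is a sum of nonnegative real numbers vanishing exactly when $\chi=1$; this one feature simultaneously absorbs every coordinate direction in (1) and produces a nonvanishing witness in (3).
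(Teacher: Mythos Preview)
Your proof is correct and follows the same approach as the paper: for (1)$\implies$(2) you expand $c^{(N)}$ and use pigeonhole to show each term is divisible by some $(1-\gamma_i)^{(n)}$, and for (2)$\implies$(3) you compute $\Sigma(c.\chi)=\sum_i|1-\chi(\gamma_i)|^2>0$ exactly as the paper does. Your version is in fact a bit more carefully written---the paper's one-line claim that ``$c^{(n)}$ is in the ideal generated by $\{(1-\gamma_i)^{(2n)}\}$'' is literally false for the same $n$ (consider the cross terms when $k\geq 2$), and what is really meant is your pigeonhole statement with a suitably enlarged exponent.
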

\begin{proof} We see that $c^{(n)}$ is in the ideal generated by $\{(1-\g_i)^{(2n)}: i=1,2,...,k\}$
and this proves the first implication.  We note next that $h(\chi)=\Sigma(c.\chi)$ is the sum
of $|1-\chi(\g_i)|^2$ taken over $i=1,2,...,k$, and thus $h(\chi)>0$ if $\chi\neq 1$.  It
follows that $\Sigma c^{(n)}.\chi=h(\chi)^n>0$ for such $\chi$. Because
$(c^{(n)}*a).\chi=(c^{(n)}.\chi)*(a.\chi)$,  we conclude the t-summability of $a.\chi$ when $\chi\neq 1$.

\end{proof}
Lemmas \ref{word} and \ref{tpol} will be appealed to in the 
next section on h-summability; they are irrelevant for the rest of this section.  

\begin{lem}\label{word}
Assume that $a:\G\to\C$ satisfies condition \emph{(1)} of lemma \ref{fd}. 
Let $N\in\N$ and let $p:\G\to\G/N\G$ denote the projection. Let $b:\G/N\G\to\C$ be a function
  satisfying $\Sigma_{\G/N\G}b=0$. 
\\(1) Then the series $\Sigma (b\circ p).a$ is t-summable. 

Let $w_i=1+\g_i+...+\g_i^{(N-1)}$
and let $w=w_1^{(n)}*...*w_k^{(n)}$. 
\\(2) Then $\Sigma w=N^{nk}>0$ and $w*((b\circ p).a)$ is in $\abss(\G)$.  

\end{lem}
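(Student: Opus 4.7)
Part (2) immediately implies part (1): if $\Sigma w = N^{nk}\neq 0$ and $w*((b\circ p)\cdot a)\in\abss(\Gamma)$, then $w$ witnesses the t-summability of $(b\circ p)\cdot a$ by the very definition of t-summability. The formula $\Sigma w = N^{nk}$ is immediate from the multiplicativity of the augmentation $\C[\Gamma]\to\C$ under convolution: $\Sigma w = \prod_{i=1}^k(\Sigma w_i)^n = N^{nk}$. So the substantive task is to prove that $w*((b\circ p)\cdot a)\in\abss(\Gamma)$.

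Since $\gamma_1,\ldots,\gamma_k$ generate $\Gamma$, the quotient $\Gamma/N\Gamma$ is a finite abelian group, and I expand $b$ in characters: $b=\sum_\chi c_\chi \chi$. The assumption $\Sigma_{\Gamma/N\Gamma} b = 0$ kills the trivial character, reducing the problem to proving $w*((\chi\circ p)\cdot a)\in\abss(\Gamma)$ for each nontrivial character $\chi$ of $\Gamma/N\Gamma$. Fix such $\chi$ and set $\chi'=\chi\circ p$. A routine calculation yields the twist identity $c*(\chi'\cdot f) = \chi'\cdot((c\cdot\chi'^{-1})*f)$ for any finitely supported $c$, together with the distributive law $(c_1*c_2)\cdot\chi' = (c_1\chi')*(c_2\chi')$. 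Applying these to $c=w$ gives
\[
w*((\chi\circ p)\cdot a) = \chi'\cdot (\tilde w*a), \qquad \tilde w=\tilde w_1^{(n)}*\cdots*\tilde w_k^{(n)},\quad \tilde w_i=\sum_{j=0}^{N-1}\chi'(\gamma_i)^{-j}\gamma_i^j.
\]
Since $|\chi'|\equiv 1$ pointwise, the problem reduces to $\tilde w*a\in\abss(\Gamma)$.

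Because $\chi\neq 1$ and the $\gamma_i$ generate $\Gamma$, at least one $i$ satisfies $\omega:=\chi'(\gamma_i)\neq 1$, and $\omega$ is necessarily an $N$-th root of unity. For this $i$, the polynomial $\tilde w_i(t)=\sum_{j=0}^{N-1}\omega^{-j}t^j$ is a geometric sum that vanishes at $t=1$, so $(t-1)\mid \tilde w_i(t)$ in $\C[t]$; substituting $t=\gamma_i$ in $\C[\Gamma]$ gives $\tilde w_i = -(1-\gamma_i)*Q_i$ for a finitely supported $Q_i\in\C[\Gamma]$. Therefore
\[
\tilde w_i^{(n)}*a = (-1)^n\, Q_i^{(n)}*\bigl((1-\gamma_i)^{(n)}*a\bigr)\in\abss(\Gamma)
\]
by hypothesis (1) of Lemma~\ref{fd}, since $Q_i^{(n)}$ is finitely supported and $(1-\gamma_i)^{(n)}*a\in\abss(\Gamma)$. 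The remaining factors $\tilde w_j^{(n)}$ are also finitely supported, so convolving with them preserves $\abss(\Gamma)$ and yields $\tilde w*a\in\abss(\Gamma)$. The main obstacle is exactly this polynomial factoring of $\tilde w_i$: the geometric-sum vanishing is what lets one extract the factor $(1-\gamma_i)$ from the twisted sum, and without it the hypothesis $(1-\gamma_i)^{(n)}*a\in\abss(\Gamma)$ could not be brought to bear on the non-absolutely-convergent series $(b\circ p)\cdot a$.
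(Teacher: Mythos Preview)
Your proof is correct and follows essentially the same route as the paper: reduce to $b=\chi$ a nontrivial character of $\Gamma/N\Gamma$, pick $i$ with $\chi(p\gamma_i)\neq 1$, and use that the geometric sum $w_i$ acquires a factor corresponding to $(1-\gamma_i)$ after twisting, so that hypothesis~(1) of Lemma~\ref{fd} applies. The only cosmetic differences are that the paper deduces part~(1) directly from Lemma~\ref{fd}(1)$\Rightarrow$(3) rather than from part~(2), and it phrases the key divisibility as ``$1-\chi(p\gamma_i)\gamma_i$ divides $w_i$'' instead of first twisting $w$ to $\tilde w$ and then factoring out $1-\gamma_i$; these are the same computation under the identity $(c*f)\cdot\chi'=(c\chi')*(f\chi')$ that you wrote down.
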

\begin{proof} The hypothesis  $\Sigma_{\G/N\G}b=0$ implies that $b$ is a linear combination
of nontrivial characters $\chi$ of $\G/N\G$. Thus (1) follows from the previous lemma. 
It suffices to prove part (2) of the lemma for $b=\chi$ with $\chi$ as above. Now (1) of lemma \ref{fd} implies
that 
\\$(1-\chi(p\g_i)\g_i)^{(n)}*(a.(\chi\circ p))$ is in $\abss(\G)$. Part (2) is proved once it is checked that $w$ is in the ideal in the group algebra
of $\Gamma$ generated by $(1-\chi(p\g_i)\g_i)^{(n)}$ for $i=1,2,...,k$.
 Choose $i$ so that
$\chi(p\g_i)\neq 1$. Now $w_i$  is a multiple of $1-\chi(p\g_i)\g_i$. It follows that both $w_i^{(n)}$ and $w$  are multiples of  $(1-\chi(p\g_i)\g_i)^{(n)}$.
This completes the proof of part (2).

\end{proof}
\begin{lem}\label{tpol}With the assumptions of lemma \ref{fd}, assume there
is some natural number $n$ so that $(1-\g_i)^{(n)}*a=0$ for all $i=1,2,...,k$.
 Then $a.\chi$ is t-summable for all $1\neq \chi\in\t{Hom}(\G,\C^\times)$,
 and in fact $\Sigma a.\chi=0$.

\end{lem}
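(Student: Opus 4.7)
The plan is to reduce the claim to a direct application of the definition of t-summability, by ``twisting'' the hypothesis $(1-\gamma_i)^{(n)}*a=0$ by the character $\chi$.

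First I would record the basic intertwining identity: for any homomorphism $\chi:\Gamma\to\C^{\times}$ (not necessarily unitary) and any finitely supported $d\in\C[\Gamma]$, the factorization $\chi(\gamma)=\chi(h)\chi(\gamma-h)$ implies
\[
(d*a)\cdot\chi \;=\; (d\cdot\chi)*(a\cdot\chi)
\]
whenever the convolution on the left is defined. Applied to $d=(1-\gamma_i)^{(n)}$, and using $(1-\gamma_i)\cdot\chi=1-\chi(\gamma_i)\gamma_i$ in $\C[\Gamma]$, the hypothesis becomes
\[
\bigl(1-\chi(\gamma_i)\gamma_i\bigr)^{(n)} * (a\cdot\chi) \;=\; 0 \qquad(i=1,2,\dots,k).
\]

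Next, because $\gamma_1,\dots,\gamma_k$ generate $\Gamma$ and $\chi\neq 1$, there must be some index $i$ with $\chi(\gamma_i)\neq 1$. Fix such an $i$ and set $c'=(1-\chi(\gamma_i)\gamma_i)^{(n)}\in\C[\Gamma]$. Then $\Sigma c'=(1-\chi(\gamma_i))^n\neq 0$, while $c'*(a\cdot\chi)=0$ is trivially in $\abss(\Gamma)$. By the very definition of t-summability given in the introduction, this witnesses the t-summability of $a\cdot\chi$, and the sum formula (2) yields
\[
\Sigma(a\cdot\chi) \;=\; (\Sigma c')^{-1}\,\Sigma\bigl(c'*(a\cdot\chi)\bigr) \;=\; (1-\chi(\gamma_i))^{-n}\cdot 0 \;=\; 0.
\]

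There is really no obstacle here beyond being careful to verify that the intertwining identity is valid for a non-unitary $\chi\in\Hom(\Gamma,\C^{\times})$ (it is, since only the multiplicativity of $\chi$ is used), and to notice that the choice of a single generator $\gamma_i$ with $\chi(\gamma_i)\neq 1$ is already enough — one does not need the symmetrized element $c=\sum_i (1-\gamma_i)*(1-\gamma_i^{-1})$ used in \ref{fd}, since the hypothesis provides an outright annihilation rather than merely an $\abss$-bound.
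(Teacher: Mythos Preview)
Your proof is correct and follows essentially the same approach as the paper's: twist the annihilation hypothesis by $\chi$ via the identity $((1-\gamma_i)^{(n)}*a)\cdot\chi=(1-\chi(\gamma_i)\gamma_i)^{(n)}*(a\cdot\chi)$, pick an index $i$ with $\chi(\gamma_i)\neq 1$, and use $c'=(1-\chi(\gamma_i)\gamma_i)^{(n)}$ with $\Sigma c'=(1-\chi(\gamma_i))^n\neq 0$ as the witness for t-summability. Your write-up simply spells out in more detail what the paper compresses into two lines, including the remark that only multiplicativity of $\chi$ is needed so that non-unitary $\chi$ pose no difficulty.
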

\begin{proof}This follows from
$0=((1-\g_i)^{(n)}*a).\chi=(1-\chi(\g_i)\g_i)^{(n)}*(a.\chi)$ and
\\$\Sigma(1-\chi(\g_i)\g_i)^{(n)}=(1-\chi(\g_i))^n$, noting that
$1\neq \chi(\g_i)$ for some $i$, once it assumed that $\chi\neq 1$.
\end{proof}
\subsection{t-summability with parameters}
 It will be necessary to state everything with parameters. Our parameter-space is a topological space
 $W$.  The domain of several functions considered is the space $W\times\G$, which inherits its
 product topology. Frequently, $W$ is an open subset of Euclidean space.
 
 We
 collect in lemma \ref{cont} below some basic properties easily
derived from a first course in Analysis
such as \cite{A},\cite{R}. We set up the requisite notation first. 
\\Given $a:\G\to\C$ and a finite subset $S\subset\G$, we set $\|a,S\|_1=\Sigma\{|a(\g)|:\g\notin S\}$
 \\Given a function $a:W\times\G\to\C$, we put 
 \\$a_w(\g)=a(w,\g)$ for all  $w\in W,\g\in\G$ and  define $\|a,S\|_1=\sup\{\|a_w,S\|_1:w\in W\}$. 
 \\ $a:W\times\G\to\C$ is \emph{acceptable} if 
$\inf\{\|a,S\|_1: S\subset\G,\#S<\infty\}=0$.

We assume that $\G$ is discrete and $W$ is a topological space, and then $W\times\G$
is endowed with its product topology.

\begin{lem}\label{cont}
 Assume that $a:W\times\G\to\C$ is acceptable.  For $w\in W$ and $\chi\in\G^*$, let $G_a(w,\chi)$ be the sum of the absolutely convergent series $\Sigma \chi.a_w$.
 \begin{enumerate}
 \item If  $a$ is continuous , then $G_a:W\times\G^*\to\C$ is also continuous. 
 \item If $W\subset\C$ is open and $w\mapsto a(w,\g)$ is holomorphic for every
 $\g\in\G$, then $w\mapsto G_a(w,\chi)$ is holomorphic for every $\chi\in\G^*$.
 \item If $W\subset \R^m$ is open, if $v\in\R^m$, and if the directional derivative $\pd_va$
 of $w\mapsto a(w,\g)$ exists, is acceptable and continuous,  then the directional 
 derivative $\pd_vG_a$ of the function $w\mapsto G_a(w,\chi)$ exists for all $w\in W,\chi\in\G^*$ and
 is in fact given by $\pd_vG_a=G_b$ where $b=\pd_va$.
 
 \end{enumerate}
  
 \end{lem}
 Part (1) of the lemma is the Weierstrass M-test rephrased to suit the situation at hand. Part (2) 
 of the lemma is the basis of analytic continuation in the parameters.
\subsection{t-summability for $\G=\Z^k$}

 For the rest of this section, we take
 $\G=\Z^k$ and $\G^*=\T^k$. The given basis of $\Z^k$, namely $(1,0,0....,0),(0,1,0,...,0),...
 (0,...,0,1)$,  will be denoted $e_1,e_2,...,e_k$. Given $a:\Z^k\to\C$ we put $\Delta_ia=(1-e_i)*a$.
  Thus $\Delta_ia(n)=a(n)-a(n-e_i)$ for all $n\in\Z^k$.  Iterating this operator $m$ times
  we have $(1-e_i)^{(m)}*a=\Delta_i^ma$. We wish to find sufficient conditions that ensure
  $\Delta_i^ma\in\abss(\Z^k)$ for all $1\leq i\leq k$ in order to appeal to lemma \ref{fd}. 
   We do so when 
  $a=f|_{\Z^k}$ and $f:\R^k\to\C$ is a \df function.  We use the same notation $\Delta_i$
  for functions on $\R^k$ as well. Expressing $\Delta_if$ as the integral of $\pd_if$
  and iterating this procedure, we get
  
  \[
\Delta^m_if(x)=\int_0^1...\int_0^1\partial_i^mf(x-(t_1+t_2+...+t_m)e_i)dt_1dt_2...dt_m
\]
which gives 
\begin{equation}\label{ftcc}
|\Delta^m_if(x)|\leq\sup\{|\pd_i^mf(x-te_i)|: 0\leq t\leq m\}.
\end{equation} 
\emph{For the rest of this section, for $g:\R^k\to\C$ we set}
\begin{equation}\label{norm}\|g\|''=\sup\{(1+\|x\|)^{k+1}|g(x)|:x\in\R^k\}
\end{equation} 
$\|x \|^2=\dot{x}{x}$ is the standard Euclidean norm on $\R^k$. We obtain a constant $C(m,k)$
\begin{equation}\label{est1}
\sum{n\in\Z^k}\sup\{|g(n+y)|:y\in\R^k,\|y\|\leq m\}\leq C(m,k)\|g\|''
\end{equation} 
Putting all the above together we see
\begin{equation}\label{ftccc}
\sum{n\in\Z^k}|\Delta_i^mf(n)|\leq C(m,k)\|\pd_i^mf\|''
\end{equation}
So, to apply lemma 3.2, all we require is the finiteness of $\|\pd^m_if\|''$ for all $1\leq i\leq k$ and for some $m$
with $\|\,\|''$ as given in (\ref{norm}).   A  convenient class of functions which satisfies this 
condition  is given below.
\begin{dfn}\label{Ht} A $\C$-valued \df function $f$ defined on the complement of a compact
 $K\subset\R^k$ is $\hinf$ if  there is some $p\in\R$ for which
\begin{equation}\label{Ht1}
\|x\|^r\pd_v^rf(x)\mbox { is }\O(\|x\|^p)\mbox{ as }\|x\|\to\infty,\,\,\forall v\in\R^k,\forall r\geq 0
\end{equation}
where $\pd_v$ denotes the directional derivative.

Let  $f:W\times  (\R^k\setminus K)\to\C$ be a function so that $f_w$ is \df 
 for all $w\in W$,
(where $f_w(x)=f(w,x)$ for all $w\in W,x\in\R^k$). 
\\We say $f$ is \emph{uniformly} $\hinf$ if there
 \begin{equation}\label{Htu}
 \sup\{\|x\|^{r-p}|\pd_v^rf(w,x)|:w\in W,x\in \R^k,x\notin K'\}<\infty,\,\,\forall v\in\R^k,\forall r\geq 0
 \end{equation}
 for some $p\in\R$ and some compact subset $K'$ of $\R^k$ that contains $K$.

Note that if $f:\R^k\to\C$ is \df, (\ref{Ht1}) is equivalent to (\ref{Ht2}) below

\begin{equation}\label{Ht2}
\sup\{(1+\|x\|)^{r-p}|\pd_v^rf(x)|:x\in\R^k\}<\infty\,\,\forall v\in\R^k,\forall r\geq 0
\end{equation}
 
\end{dfn}
We are ready to state and prove the main result of this section. The notation $\Sigma'$
 below stands for the sum taken  over $\{n\in\Z^k,n\notin K\}$. Precisely, the $n$-th
 term of the series is defined to be zero when $n\in K\cap\Z^k$, and the resulting series
 whose terms are indexed by $\Z^k$ is tested for t-summability.
 \begin{thm}\label{thmtcon}
 \noindent
 (A)
 Assume $f:\R^n\setminus K\to\C$ is $\hinf$. Then the series $\sum{n\in\Z^n}'f(n)z^n$
 is t-summable for all $1\neq z\in\T^k$. The sum of the series, denoted by $G_f(z)$,  is continuous on
 the domain $1\neq z\in\T^k$.
\\ (B) Assume that  $f:W\times(\R^k\setminus K)\to\C$ is uniformly $\hinf$. Denote the sum
 of the t-summable series $\sum{n\in\Z^k}f(w,n)z^n$ by $G_f(w,z)$ for all $w\in W,1\neq z\in\T^k$.
 If $f$ is continuous, so is $G_f$. 
 \\(C) With notation and assumptions as in (B), assume in addition that $W\subset \C$ is open, and moreover that
 $w\mapsto f(w,x)$ is holomorphic for every 
 $x\in\R^k\setminus K$. Then $w\mapsto G_f(w,z)$ is a holomorphic function of $w\in W$ 
 for every $1\neq z\in\T^k$.
 \end{thm}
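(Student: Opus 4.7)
The plan is to reduce all three parts of Theorem \ref{thmtcon} to Lemma \ref{fd} by verifying its hypothesis (1) via the estimate (\ref{ftccc}), and then to read off continuity and holomorphicity from the explicit quotient formula for the t-sum combined with Remark \ref{cont}.

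First I would dispense with the singularity at $K$ by a smooth cutoff. Choose $\phi \in \smooth(\R^k)$ with values in $[0,1]$ that vanishes in an open neighborhood of $K$ and equals $1$ outside a larger compact $K'$; the same $\phi$ works in parts (B) and (C), since $K$ is independent of $w$. Setting $\tilde{f}_w = \phi f_w$ (understood as $0$ on $K$) produces a $\smooth$ function on all of $\R^k$ that coincides with $f_w$ outside $K'$. Because $\phi$ modifies $f$ only on the compact $K'$, the uniform $\hinf$ estimate (\ref{Htu}) transfers to the corresponding global estimate (\ref{Ht2}) for $\tilde{f}_w$ with the same exponent $p$ and with constants uniform in $w$. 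Now fix $m \in \N$ with $m \geq p + k + 1$: then $\sup_{w \in W}\|\pd_i^m \tilde{f}_w\|''$ is finite for each $i$, and (\ref{ftccc}) yields $\Delta_i^m \tilde{f}_w|_{\Z^k} \in \abss(\Z^k)$ with $\abss$-norm bounded uniformly in $w$. This is exactly condition (1) of Lemma \ref{fd} for the standard generators $e_1,\ldots,e_k$ of $\Z^k$. Since the series of the theorem differs from $\sum{n \in \Z^k}\tilde{f}_w(n)z^n$ only on the finite set $K' \cap \Z^k$, the two share the same t-summability behaviour, and part (A) now follows from (1)$\Rightarrow$(3) of Lemma \ref{fd}.

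For the continuity and holomorphicity statements I would use the quotient formula that emerges from the proof of Lemma \ref{fd}. With $c = \Sigma_{i=1}^{k}(1-e_i)*(1-e_i^{-1})$ one has $c^{(m)}*\tilde{f}_w|_{\Z^k} \in \abss(\Z^k)$ and $\Sigma(c^{(m)}\cdot\chi) = h(\chi)^m$, where $h(\chi) = \Sigma_{i=1}^{k}|1-\chi(e_i)|^2$ is strictly positive on $\T^k\setminus\{1\}$. Hence
\[
G_f(w,z) \;=\; \frac{\sum{n \in \Z^k}(c^{(m)}*\tilde{f}_w)(n)\,z^n}{h(z)^m} \;+\; \Phi(w,z),
\]
where $\Phi(w,z) = \sum{n \in K'\cap\Z^k}\alpha_n(w)\,z^n$ is the finite correction between the series of the theorem and the series $\sum{n \in \Z^k}\tilde{f}_w(n)z^n$. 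The uniform bound established above is precisely the hypothesis $\|c^{(m)}*\tilde{f}\|' < \infty$ in the notation of Remark \ref{cont}, so part (1) of that remark gives continuity of the numerator on $W \times \T^k$; dividing by the continuous nonvanishing $h(z)^m$ and adding the manifestly continuous $\Phi$ yields continuity of $G_f$ on $W \times (\T^k\setminus\{1\})$, covering the continuity statements in both (A) and (B). For (C), $w \mapsto \tilde{f}_w(x) = \phi(x)f(w,x)$ is holomorphic for every fixed $x$, hence so is $w \mapsto (c^{(m)}*\tilde{f}_w)(n)$ for every $n$; Remark \ref{cont}(2) then yields holomorphicity of the numerator in $w$, and that of $G_f$ follows.

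The principal technical obstacle is modest: it is the verification that the $\hinf$ decay survives the cutoff and propagates uniformly in $w$ through the iterated differences $\Delta_i^m$. Since $\phi$ is chosen independently of $w$ and the exponent $m$ depends only on the single $p$ appearing in (\ref{Htu}), the uniform $\abss$-control on $\Delta_i^m\tilde{f}_w$ follows directly from (\ref{ftcc}) and (\ref{est1}); everything else is a mechanical combination of Lemma \ref{fd}, the quotient formula, and Remark \ref{cont}.
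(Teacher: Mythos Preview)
Your proof is correct and follows essentially the same approach as the paper: verify hypothesis (1) of Lemma \ref{fd} via the bound (\ref{ftccc}), and extract continuity and holomorphicity from Remark \ref{cont} applied to an explicit quotient representation of the t-sum. The paper differs only cosmetically, first treating $K=\emptyset$ and then choosing the cutoff so that $\Z^k\cap\supp(\phi)=\Z^k\cap K$ (thereby avoiding your finite correction $\Phi$), and using the individual denominators $(1-z_i)^m$ in place of your single $h(z)^m$.
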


 \begin{proof} We first prove all three parts of the theorem under the assumption that $K$
 is empty.

In (A) we therefore assume that $f$ is defined on all of $\R^k$. The estimate (\ref{Ht2}) is valid
 now.
With $p$ as in that estimate, we choose any $m$ so that $m-p\geq(k+1)$. It follows that
 $\|\pd_i^mf\|''<\infty$ for all $1\leq i\leq k$. The bound (\ref{ftccc})
 then shows that $\Delta_i^mf|_{\Z^k}\in\abss(\Z^k)$ for all $1\leq i\leq k$. The ``(1) implies
  (3)'' of lemma \ref{fd} proves the t-convergence of the given series. By lemma \ref{obvious}
  we see that the sum of this series is continuous.

  For parts (B) and (C), we once again choose $m$ in exactly the same manner, but
   with $p$ as in (\ref{Htu}). We deduce that $\{\|\Delta_i^mf_w|_{\Z^k}\|_1:w\in W,1\leq i\leq k\}$
   is bounded above. By remark \ref{cont}, we see that  $u_i$ defined by
   \[u_i(w,z)=\sum{n\in\Z^k}\Delta^m_if(w,n)z^n\mbox{ for all }w\in W,z\in\T^k
   \]
   is a continuous function (and holomorphic in $w$ under the assumptions of (C)).
 Because  $(1-z_i)^mG_f(w,z)=u_i(w,z)$ for all $1\neq z\in\T^k$ and for all $1\leq i\leq k$,
 we see that the function $G_f(w,z)$ has the same property. This completes the proof of
 the theorem when $K$ is empty.
 
 We now come to the general case. Choose a test function $\phi\in\test(\R^k)$ satisfying
 \[K\subset\subset\{x\in\R^k:\phi(x)=1\}\mbox{ and }\Z^k\cap\t{supp}(\phi)=\Z^k\cap K.
 \]
 Given $f$ as in (B), we note that 
 \[h(w,x)=(1-\phi(x))f(w,x)\mbox{ if }x\notin K\mbox{ and }h(w,x)=0\mbox{ if }x\in K
 \]
 defines $h$ on all of $W\times\R^k$.  Furthermore, if $f$ satisfies the assumption of (C),
 so does $h$.  Parts (B) and (C) have already been proved for $h$. We note that
 \[\sum{n\in\Z^k}h(w,x)z^n=\sum{n\in\Z^k}'f(w,n)z^n.
 \] 
 This finishes the proof of (B) and (C) in general. (A) is the special case of (B) when $W$
 is a point.
  \end{proof}
\begin{rem}  The sum of the series in Theorem~ \ref{thmtcon}(A) is in fact \df at $1\neq z\in\T^k$.
This can be seen by noting that, with $m$ as in the above proof,
  the same argument shows that  $\Delta_i^{m+d}Pf|_{\Z^k}\in\abss(\Z^k)$
   for every polynomial of degree at most $d$. We skip the details. The
statement is contained in  
Lemma \ref{mainthmpre}.  
  \end{rem}
  Theorem \ref{tcon1prop}  as stated in the introduction, is a special case of the corollary below.
  \begin{cor}\label{elem}
 
  Assume $f:\R^k\setminus K\to\C$ is $\hinf$, let $F\subset \Z^k$ be a finite subset, 
  and let  $D=\{x\in\R^k: K-x\subset F\}$. The series $\Sigma\{f(x+n)z^n: n\in\Z^k, n\notin F\}$  is t-summable for all $x\in D,z\in\T^k, z\neq 1$. The sum of this series 
  is a \df function on $D\times(\T^k\setminus\{1\})$.

  \end{cor}\label{corhomt}
\begin{proof} The t-summability assertion follows from part (A) of Theorem 3.7, once it is noted that
translates of $\hinf$ functions are also $\hinf$.  Applying part (B) of that theorem to the function $(x,y)\mapsto f(x+y)$ on the domain $W\times (\R^k\setminus K)$,
where $W$ is a relatively compact subset of $D$, the continuity in the second assertion follows.  Part (3) of Lemma 3.5, combined with the above remark.

\end{proof}
\begin{cor}\label{corhomholt} Assume $f,g:\R^k\setminus\{0\}\to\C$ are both \df
and homogeneous of type $(1,1)$ and $(s_0,\epsilon)$ respectively. Assume 
furthermore that $f(x)$ is a positive real number for all $0\neq x\in\R^k$.
Then the t-convergent sum $\sum{n\in\Z^k}'f(x)^{-s}g(x) z^n$ is a holomorphic function of $s\in\C$
for all $x\in\R^k$ and for all $1\neq z\in\T^k$.

\end{cor}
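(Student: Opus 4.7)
The plan is to reduce the claim to Theorem \ref{thmtcon}(C). Fix $x \in \R^k$ and let $W$ be an arbitrary bounded open subset of $\C$ parametrizing $s$. Because $f$ takes positive real values on $\R^k \setminus \{0\}$, I may set $f^{-s} := \exp(-s \log f)$ using the ordinary real logarithm; then $(s,y) \mapsto f(y)^{-s}$ is jointly smooth on $\C \times (\R^k \setminus \{0\})$ and entire in $s$ for each fixed $y$. I therefore consider
\[
\tilde h : W \times (\R^k \setminus \{-x\}) \to \C, \qquad \tilde h(s,y) := f(x+y)^{-s} g(x+y),
\]
which is \df in $y$ and holomorphic in $s$. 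If I can verify that $\tilde h$ is uniformly $\hinf$ on $W \times (\R^k \setminus K')$ for some compact $K'$ containing $-x$, then Theorem \ref{thmtcon}(C) delivers the holomorphicity of $\sum{n\in\Z^k}'f(x+n)^{-s}g(x+n)z^n$ in $s \in W$; since $W$ is arbitrary this gives holomorphicity on all of $\C$.

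The main work is checking the uniformly $\hinf$ condition. First consider $h_s(y) := f(y)^{-s} g(y)$ on $\R^k \setminus \{0\}$. Since $f$ is of type $(1,1)$, $h_s$ is homogeneous of type $(s_0 - s, \epsilon)$. Using $\partial_v f^{-s} = -s(\partial_v \log f) f^{-s}$ and the Leibniz rule, $\partial_v^r h_s$ expands into a finite sum of terms of the form $P_r(s)\, Q_r(y)\, f(y)^{-s}$, where $P_r$ is a polynomial in $s$ with constant integer coefficients and $Q_r$ is \df and homogeneous of type $(s_0 - r, \pm \epsilon)$ on $\R^k \setminus \{0\}$, built from derivatives of $\log f$ and of $g$. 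Each such term is homogeneous of type $(s_0 - s - r, \cdot)$ in $y$, so $\|y\|^{r - \mathrm{Re}(s_0 - s)} |\partial_v^r h_s(y)|$ is bounded by $|P_r(s)|\, \sup_{\|u\|=1} |Q_r(u) f(u)^{-s}|$, which is a continuous function of $s$ and hence uniformly bounded for $s$ in the compact closure $\overline W$. Translation by $x$ transfers the estimate to $\tilde h$: with $K'$ a closed ball around $-x$, the ratio $\|x+y\|/\|y\|$ stays bounded above and bounded away from zero as $y$ ranges outside $K'$, so the same asymptotic rate persists, and taking $p := \sup\{\mathrm{Re}(s_0 - s) : s \in \overline W\}$ verifies (\ref{Htu}).

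The main obstacle is the bookkeeping in the Leibniz-type expansion: one must confirm that after each differentiation the terms remain in the claimed form $P_r(s)\, Q_r(y)\, f(y)^{-s}$, and that the dependence on $s$ enters only through the polynomial $P_r(s)$ together with the uniformly bounded factor $\sup_{\|u\|=1}|f(u)^{-s}|$ for $s\in\overline W$. Once this is checked, the passage from $h_s$ to $\tilde h = h_s(x + \cdot)$ and the invocation of Theorem \ref{thmtcon}(C) are routine.
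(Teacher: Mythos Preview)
Your argument is correct and follows exactly the route the paper takes: reduce to Theorem~\ref{thmtcon}(C) by verifying that $h(s,y)=f(y)^{-s}g(y)$ (and its translate) is uniformly $\hinf$ on $W\times\{\|y\|>r\}$ for bounded $W\subset\C$. The paper's proof merely asserts this observation without details, whereas you have supplied the Leibniz expansion and the translation estimate that justify it.
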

\begin{proof} To apply Theorem 3.7(C), one has to observe that 
$h(s,x)=f(x)^{-s}g(x)$ is uniformly $\hinf$ on $W\times\{x\in\R^k:\|x\|>r\}$ where
$W$ is a bounded subset of $\C$, and $r$ is any positive real number.
\end{proof}
As remarked in the introduction, this implies the analytic continuation of $L(\chi,s)$
for nontrivial Dirichlet characters $\chi$.

The proofs of t-summability given so far suggest a broader definition,
 that of t-integrability on $\R^k$. This requires the language of distributions, and is given in \ref{tplus}.

\section    {$\t{h}$-summability on $\Q^k$}
\begin{rem}\label{req}
Before beginning on h-summability, we inform the reader that the simple Lemma \ref{tstab} on t-summability with $\G_1=\Z^k$ and $\G_2=\Q^k$ 
will be invoked several times in this section.

\end{rem}
\begin{notn}\label{hnotn}
Recall that h-summability has been defined for $R$-modules $\G$, where $R$ is a 
commutative ring. In this section, $R=\Q$ and $\G=\Q^k$.
 \\ Given $a:\Q^k\to\C$ and a subset $M\subset \Q^k$, we have $\res_Ma:\Q^k\to\C$
 given by \[\res_Ma(x)=a(x) \mbox{ if }x\in M,\mbox { and }\res_Ma(x)=0\mbox{ if }x\notin M.\] 
 For $t\in\Q\units$ and $a:\Q^k\to\C$, we define $\rho(t)a(x)=a(t\inverse x),\forall x\in\Q^k$.
We observe that 
\begin{equation}\label{gset}
\rho(t)\res_M\rho(t)\inverse f=\res_{tM}f\,\,\forall M\subset\Q^k,\forall t\in\Q\units,\forall f:\Q^k\to\C.
\end{equation}
Given $M\subset\Q^k$ and $a:M\to\C$, we extend $a$ by zero and obtain $a':\Q^k \to\C$.
When $\sum{n\in\Q^k}a'(n)$ is h-summable, we will often say that $\sum{n\in M}a(n)$
is h-summable, or even more simply that $a$ is h-summable.
\end{notn} 
\begin{prop}\label{hh} Let $f:\R^k\setminus\{0\}\to\C$ be \df and homogeneous of 
type $(-s,\epsilon)$. Then
\begin{enumerate}
\item
$\sum{0\neq n\in\Z^k}f(n)$ is h-summable
if $(-s,\epsilon)\neq (k,1)$. 

\item Assume $x\notin\Z^k$. Then $\sum{n\in\Z^k}f(x+n)$ is h-summable if 
$(-s,\epsilon)\notin\{(k-i,(-1)^i):i=0,1,2,...\}$. 
\end{enumerate}
\end{prop}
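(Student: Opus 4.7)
The plan is to produce, in each case, an element $c\in\C[\Q^\times]$ with $\epsilon(c)\neq 0$ such that $c\cdot a$ is t-summable on $\Q^k$, where $a\in\C^{\Q^k}$ is the extension by zero of the relevant summand. Throughout, $\delta_t\in\C[\Q^\times]$ denotes the group-algebra generator for $t\in\Q^\times$, the action is $\rho(t)a(y)=a(t^{-1}y)$, and homogeneity of $f$ gives $\rho(m)a(y)=m^{s}f(y)\mathbf{1}_{m\Z^k\setminus\{0\}}(y)$ in the part~(1) case.

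For part~(1): when $\epsilon=-1$, set $c=\delta_1+\delta_{-1}$; oddness of $f$ forces $c\cdot a=0$, and $\epsilon(c)=2\neq 0$. When $\epsilon=1$ and $s\neq k$, pick an integer $m>1$ with $m^{k-s}\neq 1$ and set $c=-\delta_1+m^{k-s}\delta_m$, so $\epsilon(c)=m^{k-s}-1\neq 0$. A direct computation yields
\[(c\cdot a)(y)=f(y)\bigl(m^k\mathbf{1}_{m\Z^k}(y)-1\bigr)=\sum_{1\neq\lambda\in\T^k(m)}f(y)\lambda^y\]
for $y\in\Z^k\setminus\{0\}$ (zero elsewhere), using character orthogonality on $\Z^k/m\Z^k$. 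Each $y\mapsto f(y)\lambda^y$ is t-summable by Theorem~\ref{thmtcon}(A) (applied with $\lambda\neq 1$ and $f$, being \df\ and homogeneous, which is $\hinf$), so the finite sum $c\cdot a$ is too. By Remark~\ref{req}, $c\cdot a$ is t-summable on $\Q^k$, so $a$ is h-summable.

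For part~(2), the shift in $y\mapsto f(x+y)$ destroys pure homogeneity, and no single scaling $c$ linearizes the problem. The strategy is to Taylor-expand $f$ in the first argument about the integer point $y$ for $\|y\|$ large, then apply part~(1) to each derivative. Fix $R>2\|x\|$ and an integer $N$ with $\mathrm{Re}(s)+N+1>k$. For $\|y\|\geq R$,
\[f(x+y)=\sum_{|\alpha|\leq N}\frac{x^{\alpha}}{\alpha!}\,\partial^{\alpha}f(y)+R_N(x,y),\qquad|R_N(x,y)|=O\bigl(\|y\|^{-\mathrm{Re}(s)-N-1}\bigr).\]
Decompose $a=a_{\mathrm{fin}}+a_R+\sum_{|\alpha|\leq N}(x^{\alpha}/\alpha!)\,b_{\alpha}$, where $a_{\mathrm{fin}}$ has finite support (the $\|y\|<R$ part), $a_R\in\abss(\Q^k)$ by the choice of $N$, and $b_{\alpha}(y)=\partial^{\alpha}f(y)\,\mathbf{1}_{\{y\in\Z^k,\,\|y\|\geq R\}}(y)$. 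Each $\partial^{\alpha}f$ is \df on $\R^k\setminus\{0\}$ and homogeneous of type $(-s-|\alpha|,\epsilon(-1)^{|\alpha|})$; part~(1) then furnishes h-summability of $\sum_{n\neq 0}\partial^{\alpha}f(n)$ whenever the exclusion there is avoided, and $b_{\alpha}$ differs from this series only by a function of finite support, so h-summability carries over. Summing the pieces via linearity of h-summability proves the claim.

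The main obstacle is to match the Taylor-level failures to the proposition's excluded set. A failure at level $|\alpha|\leq N$ triggers part~(1)'s exclusion for $\partial^{\alpha}f$, i.e.\ $|\alpha|=k-s\in\Z_{\geq 0}$ together with $\epsilon=(-1)^{|\alpha|}$; setting $i=2k-|\alpha|$ rewrites this as $(-s,\epsilon)=(k-i,(-1)^i)$ with $0\leq i\leq 2k$, matching exactly the proposition's excluded pairs (the cases $i>2k$, where $\mathrm{Re}(s)>k$, are trivially h-summable by absolute convergence and require only $N=0$). Thus outside the excluded set one can always choose $N$ so that every Taylor term is handled by part~(1), and the decomposition closes.
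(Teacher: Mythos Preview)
Your approach is essentially identical to the paper's: for part (1) you use the same torsion-sum/character-orthogonality identity to produce an element of $\C[\Q^\times]$ annihilating $a$ modulo t-summable functions, and for part (2) you use the same Taylor expansion of $f(x+\cdot)$ to reduce to part (1) applied to the homogeneous derivatives $\partial^\alpha f$, with an absolutely summable remainder. The paper's proof is slightly terser but follows exactly these two steps.

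One bookkeeping issue in your final paragraph deserves comment. Your identification of the failure level is correct: part (1) fails for $\partial^\alpha f$ exactly when $s+|\alpha|=k$ and $\epsilon(-1)^{|\alpha|}=1$, i.e.\ $(s,\epsilon)=(k-|\alpha|,(-1)^{|\alpha|})$ with $|\alpha|\geq 0$. The natural index is therefore $i=|\alpha|$, giving the exclusion set $\{(s,\epsilon)=(k-i,(-1)^i):i\geq 0\}$. The proposition as printed writes $(-s,\epsilon)$ rather than $(s,\epsilon)$, which is almost certainly a typo (the paper's own proof never verifies the match and implicitly uses the $(s,\epsilon)$ form). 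Your reparametrization $i=2k-|\alpha|$ is an attempt to reconcile with the literal printed statement, but it does not work: nothing prevents $|\alpha|>2k$, so your claim $i\geq 0$ is unjustified, and the ``$i>2k$'' discussion addresses the wrong end of the mismatch. Simply take $i=|\alpha|$ and note the sign slip in the statement; the substance of your argument is then complete and matches the paper.
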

\begin{proof} We define $f(0)=0$ and regard $\R^k$ as the domain of $f$.
 
$\sum{n\in\Z^k}f(n)z^n$ is t-convergent for $z\neq 1$ by  
Corollary \ref{elem}.  Summing over the nontrivial N-torsion points of $\T^k$, we deduce
that 
$N^k\res_{N\Z^k}f-\res_{\Z^k}f$ is t-summable. Thanks to the homogeneity assumption
and (\ref{gset}), we get
\begin{equation}\label{torsion} 
(N^{k-s}\rho(N)-1)\res_{\Z^k}f\mbox{ is t-summable on }\Q^k.
\end{equation}
It follows that $\res_{\Z^k}f$ is h-summable if $(N^{k-s}-1)\neq 0$, and that
its sum is given by
\begin{equation}\label{useful}
\sum{0\neq n\in\Z^k}f(n)=(N^{k-s}-1)\inverse\Sigma\{\sum{0\neq n\in\Z^k}f(n)z^n:1\neq z\in\T^k,z^N=1\}.
\end{equation}
If $s\neq k$, one may choose such a natural number $N$.  If 
 If $\epsilon=-1$, then $(1+\rho(-1))\res_{\Z^k}f=0$
so it is h-summable in this case as well.  This proves part (1).

Fix $x\in\R^k$. Let $f_x(v)=f(x+v)$ for every $v\in\R^k$. 
Utilizing the Taylor expansion of $f$ at $v\in\R^k$ we obtain
\[f(v+x)=g_0(v)+g_1(v)+...+g_{m-1}(v)+\mathrm{Rem}_m(v)\mbox{ when }\|v\|>\|x\|\]
We note that $g_i(v)=\pd_x^if(v)/i!$ is \df on $\R^k\setminus\{0\}$ and homogeneous of type $(
-(s+i),(-1)^i\epsilon)$. By part (1) we see that $\res_{\Z^k}g_i$ is h-convergent.
  If $m+\Real (s)=h>k$ we see that $\mathrm{Rem}_m(v)$ is $\O(\|v\|^{-h})$
 as $\|v\|\to\infty$, and so  $\mathrm{Rem}_m|_{\Z^k}$ is in $\abss(\Z^k)$. It follows that
 $\res_{\Z^k}f_x$, being a finite sum of h-convergent series, is itself h-convergent.

\end{proof}
\begin{lem}\label{failure} 
If $f:\R^k\setminus\{0\}\to\C$ is $\t{C}^1$, homogeneous
of type $(-k,1)$ , and if $\sum{0\neq n\in\Z^k}f(n)$ is h-summable, then $f=0$.
\end{lem}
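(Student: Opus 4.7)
The plan is to unwind the definition of h-summability down to t-summability of $\res_{\Z^k}f$ on $\Z^k$ itself, and then rule out that t-summability for $f\not\equiv 0$ by a Taylor/homogeneity estimate in the spirit of the proof of Theorem \ref{thmtcon}.

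Let $a:=\res_{\Z^k}f$; by h-summability there is a finitely supported $c=\sum_t c_t[t]\in\C[\Q^\times]$ with $\epsilon(c)\neq 0$ and $c\cdot a$ t-summable on $\Q^k$. Evenness of $f$ gives $\rho(-1)a=a$, so after combining $\pm t$-terms I may assume $\mathrm{supp}(c)\subset\Q_{>0}$. Let $M\in\N$ be divisible by every denominator in $\mathrm{supp}(c)$ and set $L=\tfrac{1}{M}\Z^k$; then $L\supset t\Z^k$ for every $t\in\mathrm{supp}(c)$, so $c\cdot a$ is supported in $L$, and by Remark \ref{req} its t-summability on $\Q^k$ is equivalent to t-summability on $L$. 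Rescaling $L\xrightarrow{\sim}\Z^k$ by $x\mapsto Mx$ and using the homogeneity identity $f(y/M)=M^kf(y)$, the function $c\cdot a$ on $L$ pulls back to the $\Z^k$-supported function
\[\tilde a_c(y)\;=\;\sum_t c_t\,n_t^k\,\res_{n_t\Z^k}f(y),\qquad n_t:=Mt\in\N,\]
which is t-summable on $\Z^k$.

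Now
\[\tilde a_c-\epsilon(c)\res_{\Z^k}f\;=\;\sum_t c_t\bigl(n_t^k\res_{n_t\Z^k}f-\res_{\Z^k}f\bigr),\]
and each summand is precisely the t-summable element $(N^{k-s}\rho(N)-1)\res_{\Z^k}f$ of equation (\ref{torsion}) in the proof of Proposition \ref{hh}, specialized to $s=k$, $N=n_t$ and carried over from $\Q^k$ to $\Z^k$ via Remark \ref{req}. Hence $\tilde a_c-\epsilon(c)\res_{\Z^k}f$ is t-summable on $\Z^k$; combined with the t-summability of $\tilde a_c$ and $\epsilon(c)\neq 0$, this forces $\res_{\Z^k}f$ itself to be t-summable on $\Z^k$, producing $d\in\C[\Z^k]$ with $\epsilon(d)\neq 0$ and $d\ast f|_{\Z^k}\in\abss(\Z^k)$. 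Differentiating $f(tx)=t^{-k}f(x)$ shows $\nabla f$ is homogeneous of type $(-k-1,-1)$, hence continuous and bounded on the unit sphere, so $\|\nabla f(x)\|\leq C\|x\|^{-k-1}$ for $x\neq 0$. The mean value theorem then gives
\[\bigl|(d\ast f|_{\Z^k})(n)-\epsilon(d)f(n)\bigr|\;\leq\;\sum_m|d(m)|\cdot\|m\|\cdot C\|n\|^{-k-1}\;\leq\;C'\|n\|^{-k-1}\]
for $\|n\|$ larger than $\max_{m\in\mathrm{supp}(d)}\|m\|$, a quantity summable over $\Z^k$. So absolute summability of $d\ast f|_{\Z^k}$ forces $\sum_{n\neq 0}|f(n)|<\infty$. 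But if $f\not\equiv 0$, $C^1$-continuity produces an open cone $C\subset\R^k$ on which $|f(x)|\geq\delta\|x\|^{-k}$, making $\sum_{n\in C\cap\Z^k}|f(n)|$ diverge logarithmically. This contradiction yields $f\equiv 0$.

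The main obstacle is the bookkeeping in the first paragraph: carefully tracking how $c\cdot a$, originally a $\Q^k$-function built from the $\Q^\times$-action, gets rewritten after rescaling $L\cong\Z^k$ and use of homogeneity as a combination of \emph{integer} dilates $n_t\Z^k\subset\Z^k$, in the precise form $(N^{k-s}\rho(N)-1)\res_{\Z^k}f$ of (\ref{torsion}), so that Proposition \ref{hh} applies term by term in the reduction.
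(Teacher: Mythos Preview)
Your proof is correct and follows the same three-stage route as the paper: reduce h-summability of $\res_{\Z^k}f$ to t-summability on $\Z^k$, then to absolute summability of $f|_{\Z^k}$, then derive a contradiction on a cone. The difference lies only in the first reduction. Where you explicitly unwind an element $c\in\C[\Q^\times]$, rescale the supporting lattice $L=\tfrac1M\Z^k$ to $\Z^k$, and invoke (\ref{torsion}) term by term, the paper observes more abstractly that (\ref{torsion}) (with $s=k$) together with evenness makes the image of $\res_{\Z^k}f$ in $\C^{\Q^k}/\abss_t(\Q^k)$ invariant under \emph{all} of $\rho(\Q^\times)$; hence $c\cdot a-\epsilon(c)a\in\abss_t(\Q^k)$ for every $c$, so h-summable $\Leftrightarrow$ t-summable without any rescaling. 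Your explicit approach is a bit heavier but has the merit of making the $\t{C}^1$ endgame fully explicit via the mean value theorem, which the paper leaves to the reader. One small point: your appeal to (\ref{torsion}) is literally proved in Proposition~\ref{hh} only for $\t{C}^\infty$ $f$ (via Corollary~\ref{elem}); for $\t{C}^1$ $f$ homogeneous of type $(-k,1)$ you should note, as you effectively do later, that already a \emph{single} finite difference $\Delta_i f$ is $\O(\|x\|^{-k-1})$ and hence in $\abss(\Z^k)$, which is all that is needed to rerun the derivation of (\ref{torsion}).
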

\begin{proof} We first assume that $f$ is \df. We set $f(0)=0$. 
Recall the subspaces
 \[\abss_t(\Q^k)\subset
\abss_h(\Q^k)\subset \C^{\Q^k}=V\]
as in Proposition \ref{summary}. 
From (\ref{torsion}), we see that $(\rho(N)-1)\res_{\Z^k}f\in\abss_t(\Q^k)$
for every natural number $N$. In addition we have 
$\rho(-1)\res_{\Z^k}f=\res_{\Z^k}f$.
 It follows that
\\(i) the image of $\res_{\Z^k}f$ in $V/\abss_t(\Q^k)$
 is invariant under the action of $\rho(\Q\units)$. 

From (i) and the definition of h-summability, we get
\\(ii) $\res_{\Z^k}f$ is h-summable if and only if it is t-summable.

By remark \ref{req} with $\G_1=\Z^k$ and $\G_2=\Q^k$, we note:
\\(iii) $\res_{\Z^k}f$ is t-summable if and only if $f|_{\Z^k}:\Z^k\to\C$ is t-summable.

For every $v\in\Z^k$, we observe that $n\mapsto f(n+v)-f(n)$ gives an element
of $\abss(\Z^k)$. It follows that $f|_{\Z^k}$ is t-summable if and only if
it belongs to $\abss(\Z^k)$. Comparing the integral and the sum 
on conical regions, we see that if $f|_{\Z^k}$ is in $\abss(\Z^k)$, then
$f$ is identically zero.

The reader is left to check that the $\t{C}^1$ hypothesis is sufficient for
the validity of the above argument.

\end{proof}
\begin{prop}\label{hpol} If $f$ is a polynomial, the series $\sum{n\in\Z^k}f(n)z^n$ is 
$h$-summable for all $z\in\T^k$,and the sum of this series is zero.

\end{prop}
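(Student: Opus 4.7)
The plan is to reduce by linearity to the case where $f$ is a single homogeneous polynomial of degree $d\geq 0$, and then treat the characters $z\neq 1$ and $z=1$ separately. The case $z\neq 1$ is handled directly by Lemma \ref{tpol}, while the case $z=1$ mimics the derivation of equation (\ref{torsion}) in the proof of Proposition \ref{hh}, applied to the trivial character instead of to homogeneous distributions.

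For the case $z\neq 1$, the key observation is that a polynomial $f$ of degree $d$ on $\R^k$ satisfies $\Delta_i^{d+1}f\equiv 0$ for each coordinate direction (each $\Delta_i$ lowers the total degree by $1$). Restricting to $\Z^k$ gives $(1-e_i)^{(d+1)}*f|_{\Z^k}=0$ for $i=1,\ldots,k$. Lemma \ref{tpol} then yields that $n\mapsto f(n)\chi(n)$ is t-summable on $\Z^k$ with sum zero for every nontrivial character $\chi$, in particular for $\chi(n)=z^n$ with $1\neq z\in\T^k$. By Remark \ref{req}, t-summability on $\Z^k$ is equivalent to t-summability of the zero-extension to $\Q^k$, and Proposition \ref{summary} gives $\abss_t(\Q^k)\subset\abss_h(\Q^k)$, so the series is h-summable with value zero.

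For the case $z=1$, fix any integer $N\geq 2$. Summing the t-summable identities $\sum_n f(n)z^n=0$ (just established) over the $N^k-1$ nontrivial $N$-torsion points $z\in\T^k$, and using $\sum_{z^N=1}z^n=N^k\mathbf{1}_{N\Z^k}(n)$, I get
\[
N^k\,\res_{N\Z^k}f-\res_{\Z^k}f\ \in\ \abss_t(\Q^k)\qquad\text{with }\Sigma=0.
\]
Now, by (\ref{gset}) together with the homogeneity identity $f(N^{-1}x)=N^{-d}f(x)$, I have $\rho(N)\res_{\Z^k}f=N^{-d}\res_{N\Z^k}f$, i.e.\ $\res_{N\Z^k}f=N^d\rho(N)\res_{\Z^k}f$. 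Substituting converts the display above into
\[
\bigl(N^{k+d}\rho(N)-1\bigr)\cdot\res_{\Z^k}f\ \in\ \abss_t(\Q^k),\qquad\Sigma=0.
\]
The element $N^{k+d}\rho(N)-1\in\C[\Q^\times]$ has augmentation $N^{k+d}-1\neq 0$ (since $k\geq 1$, $d\geq 0$, $N\geq 2$), so the very definition of h-summability in Set-up \ref{preliminary} gives $\res_{\Z^k}f\in\abss_h(\Q^k)$ with $\Sigma_{\Q^k}\res_{\Z^k}f=0/(N^{k+d}-1)=0$, as required.

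The argument is structural; there is no serious obstacle. The only points that need care are the bookkeeping of the identity $\res_{N\Z^k}f=N^d\rho(N)\res_{\Z^k}f$ as a function on $\Q^k$, and the verification that the chosen element of $\C[\Q^\times]$ has nonzero augmentation so that the canonical extension defining h-summability is available with the ring $A=\C[\Q^\times]$ acting second in the iteration from Set-up \ref{preliminary}.
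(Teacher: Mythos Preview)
Your argument is correct and follows essentially the same route as the paper: Lemma~\ref{tpol} for $z\neq 1$, and for $z=1$ the torsion-point averaging computation that underlies Proposition~\ref{hh} and equation~(\ref{useful}). The only cosmetic difference is that the paper simply cites Proposition~\ref{hh} for the $z=1$ step, whereas you rederive the relevant identity $(N^{k+d}\rho(N)-1)\res_{\Z^k}f\in\abss_t(\Q^k)$ directly; your version is in fact marginally cleaner because it works with $\res_{\Z^k}f$ throughout and so avoids the small bookkeeping issue of reconciling $\sum_{n\in\Z^k}$ with the $\sum_{0\neq n\in\Z^k}$ appearing in Proposition~\ref{hh}.
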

\begin{proof} By Lemma~\ref{tpol} we see that this series is t-summable and that its sum
is zero for $1\neq z\in\T^k$. For $z=1$ and $f$ homogeneous, the h-summability is
contained in prop.~\ref{hh}, where it is also shown that the sum of this series
is a linear combination of $\sum{n\in\Z^k}f(n)z^n$for certain $z\neq 1$,
 and is thus equal to zero.  Linearity proves the result for all polynomials.

\end{proof}
\begin{prop} \label{prophomholh}

Let $f,g$ be as in corollary \ref{corhomholt}, with $\epsilon=1$. 
Then the h-convergent sum
$h(s)=\sum{0\neq n\in\Z^k}f(n)^{-s}g(n)$ is a holomorphic function of $s\neq (k+s_0)$.
Furthermore, $h$ has a simple pole at worst at $(k+s_0)$.  
\end{prop}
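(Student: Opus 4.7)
\emph{Proof plan.} Set $F_s(x)=f(x)^{-s}g(x)$; since $f$ is homogeneous of type $(1,1)$ and $g$ of type $(s_0,1)$, $F_s$ is \df on $\R^k\setminus\{0\}$ and homogeneous of type $(-(s-s_0),1)$. The approach is to run the proof of Proposition~\ref{hh}(1) with $s$ treated as a parameter, extracting holomorphicity in $s$ from Corollary~\ref{corhomholt}.

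By Corollary~\ref{corhomholt}, for every $1\ne z\in\T^k$ the t-convergent sum
\[
G(s,z):=\sum{0\ne n\in\Z^k}f(n)^{-s}g(n)\,z^n
\]
is entire in $s$. Formula (\ref{useful}), applied to $F_s$ (so that the ``$s$'' of that formula becomes $s-s_0$), gives, for any $N\in\N$ with $N^{k-s+s_0}\ne 1$,
\begin{equation}\label{plan-key}
h(s)=(N^{k-s+s_0}-1)\inverse\sum{1\ne z\in\T^k(N)}G(s,z).
\end{equation}
The right-hand side is meromorphic in $s$, with possible poles confined to zeros of $s\mapsto N^{k-s+s_0}-1$.

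To prove holomorphicity at any $s_1\ne k+s_0$, note that $k-s_1+s_0\ne 0$, so some $N\ge 2$ satisfies $N^{k-s_1+s_0}\ne 1$: every $N\ge 2$ works when $\Real(k-s_1+s_0)\ne 0$, and if $k-s_1+s_0$ is purely imaginary and non-zero, the $\Q$-linear independence of $\log 2$ and $\log 3$ forces at least one of $N=2,3$ to work. With such $N$, (\ref{plan-key}) presents $h$ as holomorphic on a neighbourhood of $s_1$; different choices of $N$ are consistent because each equals the (unique) h-sum whenever it is defined.

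For the pole at $s=k+s_0$, fix any $N\ge 2$ and write $\sigma=s-(k+s_0)$; then $N^{k-s+s_0}-1=N^{-\sigma}-1=-\sigma\log N+O(\sigma^2)$ has a simple zero at $\sigma=0$, so $(s-(k+s_0))(N^{k-s+s_0}-1)\inverse$ is holomorphic and non-vanishing at $s=k+s_0$. Multiplying (\ref{plan-key}) by $s-(k+s_0)$ then extends it holomorphically across $s=k+s_0$, giving the at-worst-simple-pole claim. The only point requiring minor care is the legitimacy of using (\ref{useful}) with $s$ varying; this is immediate because the identity in the proof of Proposition~\ref{hh} is pointwise in $s$, and the needed holomorphic dependence is supplied by Corollary~\ref{corhomholt}.
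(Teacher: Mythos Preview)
Your proof is correct and follows the same approach as the paper: apply equation (\ref{useful}) to $f^{-s}g$, use Corollary~\ref{corhomholt} for the holomorphicity of the inner t-sums, and read off the pole from the simple zero of $N^{k+s_0-s}-1$ at $s=k+s_0$. Your argument is in fact more complete than the paper's three-line proof: you explicitly handle the spurious zeros of $N^{k+s_0-s}-1$ on the line $\Real(s)=k+s_0$ by varying $N$ and invoking the irrationality of $\log 2/\log 3$, a point the paper leaves implicit.
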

\begin{proof} Both assertions follow from equation (\ref{useful}) for $f^{-s}g$. The t-convergent
sums on the right side of that equation 
are holomorphic by Corollary \ref{corhomholt}, and $(N^{k+s_0-s}-1)$ has a simple
zero at $s=k+s_0$.

\end{proof}

We discuss next an inhomogeneous situation: 
express  $P\in \R[x_1,x_2,...,x_k]$ as the sum of homogeneous polynomials:
 $P=P_0+P_1+...+P_d$. Assume that $P_d(x)>0$ for all nonzero $x\in \R^k$. Evidently,
 $K=\{x\in\R^k:P(x)\leq 0\}$ is compact. Let $F\subset \Z^k$ be any finite subset that contains
 $K\cap \Z^k$.
\begin{thm}\label{special}  
With $P$ and $F$ as above,  let  $G(s)$
denote the sum of the series $\sum{n\in\Z^k\setminus F}P(x)^{-s}$ whenever this series
 is h-convergent.  Let $E=\{(k-j)/d:j\in 2\Z, j\geq 0\}$. Then 
\begin{enumerate}\item
 $G(s)$ is defined for all $s\notin E$, and $G|_{\C\setminus E}$ is holomorphic.
 This function has meromorphic continuation with simple poles, denoted by $\tilde{G}(s)$, to all
 of $\C$.
 \item  Let $s_0\in E$. If $G(s_0)$ is defined, then $\tilde{G}$ is holomorphic at $s_0$. But $\tilde{G}(s)\neq G(s)$ in general.
  \item If $s\in\Z,s\leq 0$, then $G(s)$ is defined and equals  $-\Sigma\{P(n)^{-s}:n\in F\}$.
  Therefore $\tilde{G}$ is holomorphic at all such $s$.
  \item If $k$ is odd, then $\tilde{G}(s)=-\Sigma\{P(n)^{-s}:n\in F\}$ when $s\in\Z,s\leq 0$. 
\end{enumerate}
\end{thm}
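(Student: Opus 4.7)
The plan is to expand $P(x)^{-s}$ asymptotically into smooth homogeneous pieces in $x$ plus a controlled remainder, and then sum each piece using the machinery already developed for homogeneous functions (Propositions \ref{hh}, \ref{prophomholh}, \ref{hpol} and Lemma \ref{failure}). Since $P_d>0$ on $\R^k\setminus\{0\}$, the degree $d$ must be even. For $\|x\|$ large I write $P(x)=P_d(x)(1+u(x))$ with $u(x)=(P-P_d)(x)/P_d(x)=\O(\|x\|^{-1})$, expand $(1+u)^{-s}=\sum_j\binom{-s}{j}u^j$, and collect by homogeneity to obtain, for every $N\in\N$,
\[
P(x)^{-s}=\sum_{r=0}^{N-1}h_r(s,x)+R_N(s,x),
\]
where $h_r(s,x)=P_d(x)^{-s}B_r(s,x)$, $B_r(s,x)$ is a polynomial in $s$ whose coefficients are \df and homogeneous of degree $-r$ on $\R^k\setminus\{0\}$ (hence of parity $(-1)^r$, since $d$ is even), and $R_N(s,x)=\O(\|x\|^{-d\Real(s)-N})$ uniformly for $s$ on compact subsets of $\C$ and $\|x\|$ sufficiently large.

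Setting $f=P_d^{1/d}$, which is smooth, positive, and homogeneous of type $(1,1)$, Proposition \ref{prophomholh} applied to each term $\sum_{0\neq n}f(n)^{-sd}c(n)$ with $c$ a homogeneous coefficient of $B_r$ shows that $\Sigma_r(s):=\sum_{0\neq n\in\Z^k}h_r(s,n)$ is meromorphic in $s$ with at worst a simple pole at $s=(k-r)/d$; the antisymmetry argument in Proposition \ref{hh} gives $\Sigma_r\equiv 0$ for odd $r$. I choose a finite $F'\supset F$ outside which the expansion is valid; the tail $\sum_{n\in\Z^k\setminus F'}R_N(s,n)$ then converges absolutely and locally uniformly on $\{\Real(s)>(k-N)/d\}$, so is holomorphic there. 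Reassembling with the finite correction on $F'\setminus F$ and letting $N\to\infty$ yields part (1): $G$ is h-summable and holomorphic on $\C\setminus E$, with meromorphic continuation $\tilde{G}$ whose only possible poles, all simple, lie in $E$.

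For part (2), suppose $s_0=(k-r_0)/d\in E$ (so $r_0$ is even) and $G(s_0)$ is h-summable. From the decomposition, every piece other than $\Sigma_{r_0}(s_0)$ is already defined at $s_0$, so $\Sigma_{r_0}(s_0)$ must exist as well. But $h_{r_0}(s_0,\cdot)$ is \df and homogeneous of type $(-k,1)$, so Lemma \ref{failure} forces $h_{r_0}(s_0,\cdot)\equiv 0$, i.e.\ $B_{r_0}(s_0,\cdot)\equiv 0$. Writing $\Sigma_{r_0}(s)=\sum_j\binom{-s}{j}T_j(s)$ with $T_j(s)$ the meromorphic h-sum attached to the homogeneous coefficient $c_{j,r_0}$, each $T_j$ has a simple pole at $s_0$ whose residue is a linear functional of $c_{j,r_0}$; the vanishing of $\sum_j\binom{-s_0}{j}c_{j,r_0}=B_{r_0}(s_0,\cdot)$ therefore cancels the residue of $\Sigma_{r_0}$ at $s_0$, so $\tilde{G}$ is holomorphic at $s_0$. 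The difference $\tilde{G}(s_0)-G(s_0)$ reduces to the finite-part value of $\Sigma_{r_0}$ at $s_0$, arising from the first-order expansion of $\binom{-s}{j}$ about $s_0$ against the residues of the $T_j$; this need not vanish, accounting for the stated discrepancy.

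Part (3) is immediate: for integer $s\leq 0$, $P^{-s}$ is a polynomial on $\R^k$, so Proposition \ref{hpol} gives $\sum_{n\in\Z^k}P(n)^{-s}=0$ h-summably, and subtracting the finite sum on $F$ yields $G(s)=-\sum\{P(n)^{-s}:n\in F\}$. For part (4), the equation $(k-j)/d=s$ with $s\in\Z_{\leq 0}$ and even $j\geq 0$ would force $k=sd+j$ to be even (as $d$ is even), contradicting $k$ odd; hence $s\notin E$, and the holomorphy of $G$ on $\C\setminus E$ together with part (3) gives $\tilde{G}(s)=G(s)=-\sum\{P(n)^{-s}:n\in F\}$. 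The main technical obstacle I anticipate is establishing uniform bounds on $R_N(s,x)$ (and its $s$-derivatives) sharp enough to justify termwise h-summation and holomorphic dependence on $s$, together with clean bookkeeping for the residue-cancellation argument in part (2).
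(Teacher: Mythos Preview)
Your approach is essentially identical to the paper's: both write $P^{-s}=P_d^{-s}(1\pm u)^{-s}$, expand via the binomial series, regroup the pieces by homogeneous degree, apply Propositions \ref{hh} and \ref{prophomholh} to the homogeneous summands, bound the remainder uniformly in $s$, and then use Lemma \ref{failure} for part (2) and Proposition \ref{hpol} together with the evenness of $d$ for parts (3) and (4). The one substantive omission is that the clause ``$\tilde{G}(s_0)\neq G(s_0)$ in general'' in part (2) requires an actual example, not just the observation that the finite-part term ``need not vanish''; the paper exhibits $k=2$, $P(x,y)=x^2+y^2-x$, $s_0=0$, where the discrepancy works out to the residue of $\zeta_{\Q(\sqrt{-1})}$ at $s=1$, namely $\pi/4$.
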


\begin{rem} The requirement that $k$ is odd in part (4) can be altered
in the following manner. We consider instead the
h-convergence of $Q.P^{-s}|_{\Z^k\setminus F}$ where $Q$ is a homogeneous polynomial of degree $e$ and once again denote by $\tilde{G}$ the sum of the series extended meromorphically. 
If $s\in\Z$ and $s\leq 0$, then $\tilde{G}(s)$ is once again
the negative of the sum 
$Q(n)P(n)^{-s}$ taken over $n\in F$, \emph{under the assumption that
$(e+k)$ is odd}. The proof of this is a notational modification of the proof
of the theorem given below. 

\end{rem}

\begin{proof} The statement of the theorem remains unaffected if $F$ is replaced by
a larger finite subset. 
So we will assume that $0\in F$.

Write $P=P_d(1-R)$, then $R(x)$ is $\O(\|x\|\inverse)$ as $\|x\|\to\infty$,
so we may approximate $(1-R)^{-s}$ through the power series expansion:
 \begin{equation}\label{formal}
 (1-t)^{-s}=\overset{\infty}{\sum{e=0}}a_e(s)t^e,A_m(s,t)=
 \overset{e=m-1}{\sum{e=0}}a_e(s)t^e,E_m(s,t)=(1-t)^{-s}-A_m(s,t)
 \end{equation}
 noting that $|E_m(s,t)|\leq C(m,r)|t|^m$ if $|s|<r$ and $|t|\leq 1/2$. We deduce
 bounds $|E_m(s,R(x))|\leq C'(m,r)\|x\|^{-m}$ valid when $|s|<r$ and $\|x\|>M(m,r)$.
 Choose $m$ so that $m+d\Real(s)>k$. Let $W=\{s\in\C:|s|<r\}$. For every $s\in W$,
 we see that $x\mapsto
 P_d(x)^{-s}E_m(s,x)$ for $x\in\Z^k\setminus F$ is in $\abss(\Z^k\setminus F)$, and 
 furthermore, these members of $\abss(\Z^k\setminus F)$ lie in a bounded set. By remark
  \ref{cont}, we see that
 
 \begin{equation}\label{holhol}
 s\mapsto e_m(s)=\sum{n\in\Z^k\setminus F}P_d(n)^{-s}E_m(s,R(n))
\mbox{ is holomorphic on }|s|<r.
 \end{equation}
We note that $R$ belongs to the $\Z$-graded ring obtained by adjoining $P_d\inverse$
 to $\R[x_1,x_2,...,x_k]$.  We write 
$R^i=(R^i)_i+(R^i)_{i+1}...+(R^i)_{id}$ where
 each $(R^i)_j$ is homogeneous of degree $(-j)$. 
 Putting $t=R(x)$ in (\ref{formal}) and multiplying by $P_d(x)^{-s}$ we get
 \begin{equation}\label{sum0}
 P(x)^{-s}=P_d(x)^{-s}E_m(s,x)+ \sum{j}P_d(x)^{-s}B_j(s,x)\mbox{ with }B_j(s,x)=\sum{e}a_e(s)(R^e)_j.
 \end{equation}
 By Proposition \ref{hh}, the term 
$P_d(x)^{-s}(R^e)_j$, being homogeneous of degree $(-(ds+j),(-1)^j)$
 , is h-summable when restricted to $\Z^k\setminus F$, unless $ds+j=k$ and $j\in 2\Z$. 
 Thus it is h-summable if $s\notin E$; denote its sum by $g_{e,j}(s)$. By (\ref{sum0}), 
 we see that the series in question has been expressed as a linear combination of
 h-summable series. So we conclude
 \begin{equation}\label{sum1}
 \mbox{If }|s|<r,s\notin E,\mbox{ then }G(s)\mbox{ is defined and equals }e_m(s)+\sum{j}\sum{e}a_e(s)g_{e,j}(s).
 \end{equation} 
Prop. \ref{prophomholh}(1), and (\ref{holhol}) with arbitrarily large $r$,
combine to prove part (1). 

Next, we take $s_0\in E$ and then study (i) the behaviour of $\tilde{G}$
on the region  $U=\{s:|s-s_0|<2/d\}$, and (ii) the h-summability of
$P^{-s_0}|_{\Z^k\setminus F}$. By assumption, we have $p\in 2\Z,p\geq 0$
 and $ds_0+p=k$. 
For every $s\in U$, the restrictions of
all the functions listed in (\ref{sum0}), with the exception of
 $P_d(x)^{-s}B_{p}(s,x)$, when restricted to $\Z^k\setminus\{0\}$ are 
h-summable. Furthermore, their sums are holomorphic functions on $U$.
It only remains to consider $B_{p}(s,x)$, which it is more convenient 
to express as the finite sum:

\[B_{p}(s,x)=V_0(x)+(s-s_0)V_1(x)+...+(s-s_0)^qV_q(x)\]
where all the $V_i(x)\in\R[x_1,...,x_k]_{P_d}$  are homogeneous of degree 
$(-i)$.
Let $L_i(s)=\sum{n\in\Z^k\setminus F}P_d(n)^{-s}V_i(n)$ for all $s\neq s_0$.
By  \ref{prophomholh}, we see that $(s-s_0)^iL_i(s)$ is holomorphic on $U$
for all $i>0$. Putting this together, we see:
\\(A) $\tilde{G}-L_0$ is holomorphic on $U$, and
\\(B) $\sum{n\in\Z^k\setminus F}P(n)^{-s_0}-P_d^{-s_0}(n)V_0(n)$ is h-summable.

Now assume that the given series is h-summable at $s_0$. It follows from
(B) that $P_d^{-s_0}V_0|_{\Z^k\setminus F}$ is also h-summable. By (\ref{failure}), we see that $P_d^{-s_0}V_0=0$. It follows that $P_d^{-s}V_0=0$ and therefore $L_0$ vanishes as well. We conclude from (A) that $\tilde{G}$ is
holomorphic on $U$. We see however that $\tilde{G}(s_0)=G(s_0)+b$ where
$b$ is the residue of $L_1(s)$ at $s_0$. If $k=2, P(x,y)=x^2+y^2-x, s_0=0$,
 one checks that $b$ is precisely the residue of $\zeta_K(s)$ at $s=1$,
where $K=\Q(\sqrt{-1})$, which is $\pi/4$. This completes the proof of part (2).

Proposition \ref{hpol} implies part (3). 

Note that $d\in 2\Z$ and therefore $\Z\cap E$ is empty when $k$ is odd. 
Part (4) is now implied by parts (1) and (3). This completes the proof of the theorem.

\end{proof}


\section{the Formalism of the Poisson Formula}
A nice account of the Poisson
formula is to be found in the books of Lang and Weil, \cite{L} and \cite{BNT}.
The view of this formula 
taken here is more sophisticated; it is borrowed from Weil \cite{acta}, Lion-Vergne \cite{LV} and Mumford \cite{M}.
In essence, it is simply the Poisson formula, for the original function replaced by a translate and then 
multiplied by a character, as displayed in (\ref{GPSF}).  This development  expresses 
the Fourier transform as the composite of linear
 operators that are defined on Frechet spaces, such as the Schwartz space 
$\sx$ of \df functions
of rapid decay on $X$  and $\smdouble$ defined below. These operators extend to certain spaces of distributions. 

The Poisson intertwining operator $T_B:\sx\to\smdouble$, given in (\ref{PIO}), plays a significant role in this section.   Formula  (\ref{tdfn}) defines $T_Bf$ as a sum
taken over $\g\in\G$ for every $f\in\sx$. 
The extended version of $T_B$ however, now defined for all tempered distributions, is handed to us as a package, not as a sum indexed by
$\g\in\G$.
  We think of Theorem \ref{breakup}
as the `true' Poisson formula for tempered distributions.  For a tempered distribution $u$,  the distribution $T_Bu$ is shown to be the sum of the distributions 
$u_{\g}$ defined in (\ref{ugdfn}). 

This permits us to sieve out  the obvious contributions to the singularities of $T_Bu$  at the origin to obtain
$T_B^{reg}u$ in Definition \ref{regdfn}. The next objective is to give sufficient conditions
that ensure $T_B^{reg}u$ is \df in a neighborhood of zero (see Proposition \ref{triple}).

Except for the factor $2\pi$ in the Fourier transform, the notation here
for distributions, and operations on distributions, is completely 
consistent with that of Hormander's book \cite{H}.
The facts on distributions that we use are seen in a first course
on the subject: they are contained in   
vol.1, chapter 7 , \cite{H}, and also to be found in chapters 6,7 of \cite{R1}.
\subsection{Review of the Poisson Formula}
Our data consists of a four-tuple $(X,X',B,\Gamma)$ where
\begin{enumerate}

\item[(a)] $X$ and $X'$ are finite dimensional $\R$-vector spaces, 
\item[(b)] $B:X\times X'\to\R$ is a non-degenerate bilinear form, and
\item[(c)]  $\Gamma\subset X$ is a lattice, i.e. $\Gamma$ is discrete and $X/\Gamma$
is compact.
\end{enumerate}
Every  $(X,X',B,\Gamma)$ as above produces its \emph{dual} $(X',X,B',\Gamma')$ given by
\begin{equation}\label{dual}
B'(x',x)=-B(x,x')\forall x\in X,x'\in X'
\end{equation}
with dual lattice
\begin{equation}\label{dual'}
\Gamma'=\{\gamma'\in\Gamma':B(\gamma,\g')\in\Z\forall \gamma\in\Gamma\} 
\end{equation}
The compact torus $Z$ is defined by
\begin{equation}\label{torus}
Z=X\times X'/\G\times\G'
\end{equation}
\\The Haar measure on $X$ is chosen so that $\t{vol}(X/\G)=1$. The integral
of a function with respect to this Haar measure will be denoted by 
$\int_Xf(x)dx$ or even simply by $\int_Xf$. We put 
\begin{equation}\psi(t)=\exp(2\pi\sqrt{-1}t)\,\forall t\in\R
\end{equation}  and
\begin{equation}\label{exp}
\psi_B(x,x')=\psi(B(x,x'))\text{ for all }x\in X,x'\in X'.
\end{equation}
\\ We recall that the Schwartz space of $X$, denoted by 
$\sx$, is the collection of \df $\C$-valued functions $f$ defined on $X$
for which $\semi{f}{M}{N}<\infty$ for all non-negative integers $M$ and $N$,
 where 
\begin{equation}\label{seminorm}
\semi{f}{M}{N}=\sup\{(1+\|x\|)^N\|v\|^{-m}|\pd_v^mf(x)|:x\in X,0\neq v\in X, 0\leq m\leq M\}
\end{equation}
In the above, $\pd_v$ denotes the directional derivative. Norms on both
$X$ and $X'$ are chosen arbitrarily and fixed once and for all. 
The above 
semi-norms $\semi{\,}{M}{N}$ give $\sx$ the structure of a topological
 vector space. 
\\For all $f\in\sx$, its Fourier transform $\fb f$ is the function on $X'$
defined by the absolutely convergent integral
\begin{equation}\label{fdfn}
\fb(f)(x')=\int_Xf(x)\psi_B(x,x')dx.
\end{equation}
If $f\in\sx$, then $\fb f\in\sxp$, and in fact
$\fb:\sx\to\sxp$ is continuous. This statement 
follows from the standard identies below, valid for all $u\in\sx$
\begin{equation}\label{weyl}
\pd_{x'}\fb u=2\pi\sqrt{-1}\fb B_{x'}u
\end{equation}
and 
\begin{equation}\label{weyl2}\fb\pd_xu=2\pi\sqrt{-1}B'_x\fb u
\end{equation} 
where $B_{x'}(x)=B(x,x')$ and $B'_{x}(x')=B'(x',x)$ for all $x\in X,x'\in X'$.

Let us return to (\ref{fdfn}), the Fourier integral. This integral
may be computed, first by summing over $x+\G$, and
then integrating the resulting function on $X/\G$. In the summation over
$x+\G$, there is a common factor $\psi_B(x,x')$ which we suppress for
the moment. Thus,  for $f\in\sx$,
we define $T_Bf:X\times X'\to \C$ by the formula
\begin{equation}\label{tdfn}
T_Bf(x,x')=\underset{\g\in\G}{\Sigma}f(x+\g)\psi_B(\g,x'),
\end{equation}

\emph{For the rest of this section and in (\ref{tinvariance}) in particular,  we employ the following notation}:
let $\s{D}(X\times X')$ denote the space of distributions on $X\times X'$. The map that turns 
$\s{D}(X\times X')$ into a module over the ring of infinitely differentiable functions on $X\times X'$  will be denoted by $(h,u)\mapsto h.u$  for all infinitely differentiable  
$h:X\times X'\to\C$
and all $u\in\s{D}(X\times X')$. 

We note that the property
\begin{equation}\label{tinvariance}
u\mbox{ and }\psi_B.u\mbox{ are translation invariant by }
\G'\mbox{ and }\G\mbox{ respectively,}
\end{equation}
is valid for $u=T_Bf$ for all $f\in\sx$. 

We define the space

\begin{equation}\label{smdouble}
\smdouble
\end{equation}
to be collection of 
of infinitely differentiable functions  $u:X\times X'\to\C$ 
that satisfy (\ref{tinvariance}).  Formula (\ref{tdfn}) then defines the \emph{Poisson intertwining operator} $T_B$
\begin{equation}\label{PIO}
T_B:\sx\to\smdouble
\end{equation}
For $f,g\in\smdouble$, the function $f.\o{g}:X\times X'\to\C$ descends to a function
$(f.\o{g})_d$ on the torus $Z$ defined in (\ref{torus}). We define
\begin{equation}\label{innsm}
\dot{f}{g}=\int_Z(f.\o{g})_d
\end{equation}
$\smdouble$ has the structure of a Frechet space given by the seminorms $P(v,v',m,n)$ defined for $v\in X, v'\in X'$ and non-negative integers $m,n$ by
\[P(v,v',m,n)u=\sup\{|\pd_v^m\pd_{v'}^nu(x,x'))|: (x,x')\in K\}
\]
where $K\subset X\times X'$ is a compact subset with the property that the composite \linebreak $K\hkr X\times X'\to Z$ is onto. The resulting topology on
$\smdouble$ is seen to be independent of the choice of $K$.

 The theory of Fourier series for $\t{C}^{\infty}$ functions on 
$X'/\G'$ suffices to deduce  the next proposition. 
\begin{prop}\label{Fseries}\noindent
\begin{enumerate}
\item $T_B:\sx\to\smdouble$ is an isomorphism of topological vector spaces,
\item $\dot{f}{g}=\dot{T_Bf}{T_Bg}$ for all $f,g\in\sx$,
\item The operator $S_B:\smdouble\to\sx$ given by $S_Bh(x)=\int_{X'/\G'}h(x,x')dx'$ 
is the inverse of $T_B$. 
\end{enumerate}

\end{prop}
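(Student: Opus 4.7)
The plan is to reduce all three assertions to standard Fourier analysis on the compact torus $X'/\G'$, identifying its character group with $\G$ via $\g \mapsto \psi_B(\g,\cdot)$ (well-defined because $B(\g,\g')\in\Z$ for $\g'\in\G'$). First I would verify that $T_Bf\in\smdouble$ for $f\in\sx$: absolute convergence of $\sum{\g\in\G}f(x+\g)\psi_B(\g,x')$ together with all its $(x,x')$-derivatives is immediate from the Schwartz decay of $f$, and the two twisted invariances in (\ref{tinvariance}) reduce to $\psi_B(\g,\g')=1$ for $(\g,\g')\in\G\times\G'$, which is the definition of the dual lattice.

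Next I would establish the Fourier inversion formula relating $S_B$ and $T_B$. For fixed $x\in X$, the $\G'$-periodic smooth function $x'\mapsto h(x,x')$ admits a Fourier expansion $h(x,x')=\sum{\g\in\G}c_\g(x)\psi_B(\g,x')$ with $c_\g(x)=\int_{X'/\G'}h(x,x')\psi_B(-\g,x')\,dx'$, where I normalize Haar measure on $X'$ so that $\mathrm{vol}(X'/\G')=1$. The twisted invariance $h(x+\g,x')=\psi_B(-\g,x')h(x,x')$ (a rewriting of the $\G$-invariance of $\psi_B\cdot h$) yields the key identity $S_Bh(x+\g)=c_\g(x)$. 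The Fourier expansion therefore reads $h(x,x')=\sum{\g\in\G}S_Bh(x+\g)\psi_B(\g,x')=T_B(S_Bh)(x,x')$, proving $T_B\circ S_B=\mathrm{id}$. The opposite identity $S_B\circ T_B=\mathrm{id}$ follows by exchanging sum and integral and using $\int_{X'/\G'}\psi_B(\g,x')\,dx'=\delta_{\g,0}$.

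The main technical obstacle is showing that $S_Bh$ actually lies in $\sx$. I introduce $\tilde h(x,x')=\psi_B(x,x')h(x,x')$; the invariances for $h$ translate into $\tilde h(x+\g,x')=\tilde h(x,x')$ and $\tilde h(x,x'+\g')=\psi_B(x,\g')\tilde h(x,x')$, so $|\partial_{x'}^\alpha \tilde h|$ is $\G\times\G'$-periodic and therefore bounded on $X\times X'$. Unwinding $h=\psi_B(-x,x')\tilde h$ via the Leibniz rule gives $|\partial_{x'}^\alpha h(x,x')|\le P_\alpha(\|x\|)$ for a polynomial $P_\alpha$ of degree $|\alpha|$. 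Iterated integration by parts in $x'$ then produces $|c_\g(x)|\le C_N(1+\|x\|)^N(1+\|\g\|)^{-N}$ for every $N$, and writing an arbitrary $y\in X$ as $y=x+\g$ with $x$ in a bounded fundamental domain for $\G$ converts this into the rapid decay $|S_Bh(y)|\le C'_N(1+\|y\|)^{-N}$. Directional derivatives in $x$-directions commute with $S_B$ and preserve $\smdouble$ (they do not alter either invariance condition), so the same argument bounds every seminorm $\semi{S_Bh}{M}{N}$. This establishes (3) and combines with $T_B\circ S_B=\mathrm{id}$ to give the bijection of (1); continuity of $T_B$ and $S_B$ in the Fr\'echet topologies is encoded in the same estimates.

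Finally, part (2) is a direct computation:
\[
\dot{T_Bf}{T_Bg}=\int_{X/\G}\int_{X'/\G'}\sum{\g,\g'\in\G}f(x+\g)\overline{g(x+\g')}\,\psi_B(\g-\g',x')\,dx'\,dx.
\]
Integrating $x'$ first, character orthogonality on $X'/\G'$ collapses the double sum to the diagonal $\g=\g'$, and unfolding yields $\sum{\g\in\G}\int_{X/\G}f(x+\g)\overline{g(x+\g)}\,dx=\int_X f\overline{g}=\dot{f}{g}$, which completes the proof.
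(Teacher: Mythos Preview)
Your proof is correct and follows exactly the approach the paper indicates: the paper's entire proof of this proposition is the single sentence ``The theory of Fourier series for $\t{C}^{\infty}$ functions on $X'/\G'$ suffices to deduce the statement below,'' and you have faithfully unpacked that remark by identifying the characters of $X'/\G'$ with $\G$, reading the Fourier expansion of $h(x,\cdot)$ as the identity $T_B\circ S_B=\mathrm{id}$, and extracting the Schwartz decay of $S_Bh$ from the rapid decrease of Fourier coefficients of smooth functions. Nothing in your argument departs from what the author intended.
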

The passage preceding (\ref{tdfn}) shows that  the Fourier transform of $f\in\sx$ is then given by the formula:
\begin{equation}\label{fourierdec}
\fb f(x')=\int_{X/\G}T_Bf(x,x')\psi_B(x,x')dx.
\end{equation}
All the above considerations are valid also for the dual $(X',X,B',\Gamma')$. The space defined in (\ref{smdouble})  for  $(X',X,B',\Gamma')$
will be denoted by $\smdoublep$. The operators given in (\ref{PIO}) and part (3) of Proposition \ref{Fseries}  for the dual $(X',X,B',\Gamma')$ will be denoted by
$T_{B'}$ and $S_{B'}$ respectively. The Fourier transform for the dual $(X',X,B',\Gamma')$ will be denoted by $\s{F}_{B'}$.
\\We have the isomorphism 
\begin{equation}\label{switchiso}
\sigma_B:\smdouble\to\smdoublep
\end{equation}
given by
\begin{equation}\label{switchdfn}
(\sigma_Bh)(x',x)=\psi_B(x,x')h(x,x')\,\,\forall h\in\smdouble
\end{equation}

Proposition~\ref{Fseries}, (\ref{fourierdec}) and the   negative sign in the definition of $B'$
combine to prove the following proposition. 
\begin{prop}\label{FPSF} \noindent
 \begin{enumerate}
 \item The operators $\sigma_B,T_B,T_{B'}$ are isomorphisms
 of topological vector spaces, and their inverses are $\sigma_{B'},S_B,S_{B'}$
 respectively. 
\item  $\fb=T_{B'}\inverse\circ\sigma_B\circ T_B$ and 
$\s{F}_{B'}=T_B\inverse\circ\sigma_{B'}\circ T_{B'}$, and thus $\fb$ and $\s{F}_{B'}$ are inverses of each other.
\item  The  property ``$\dot{\Phi f}{\Phi g}=\dot{f}{g}$ for all $f,g$ in the domain of $\Phi$'' holds for $\Phi=\sigma_B, T_B, T_{B'}$.
\end{enumerate}
\end{prop}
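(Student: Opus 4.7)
The plan is to reduce everything to Proposition~\ref{Fseries} together with an elementary check on the ``swap-and-twist'' operator $\sigma_B$. Concretely, I would do the three parts in the order (1) $\to$ (2) $\to$ (3), since (2) is a one-line composition identity once (1) is in place, and (3) then follows because each factor in the composition preserves inner products.

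For part~(1), the only new statement is about $\sigma_B$ (the statements for $T_B, T_{B'}$ are the content of Proposition~\ref{Fseries}). First I would verify that $\sigma_B h$ actually lies in $\smdoublep$ when $h \in \smdouble$: invariance of $\sigma_B h$ under $x \mapsto x+\gamma$ for $\gamma\in\Gamma$ follows from the invariance of $\psi_B \cdot h$ under the same translation (the second clause of (\ref{tinvariance}) for $h$), while invariance of $\psi_{B'}\cdot \sigma_B h$ under $x'\mapsto x'+\gamma'$ for $\gamma'\in\Gamma'$ collapses, by the identity $\psi_{B'}(x',x)\psi_B(x,x')=1$, to the invariance of $h(x,x')$ in the $x'$ variable (the first clause of (\ref{tinvariance})). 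For the inverse, $(\sigma_{B'}\sigma_B h)(x,x')=\psi_{B'}(x',x)\psi_B(x,x')h(x,x')=h(x,x')$, so $\sigma_B$ and $\sigma_{B'}$ are mutually inverse; continuity in both directions is immediate because multiplication by the bounded smooth function $\psi_B$ is continuous on $\smdouble$ in every Schwartz seminorm.

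For part~(2), I would rewrite (\ref{fourierdec}) as
\begin{equation*}
\fb f(x')=\int_{X/\Gamma} T_B f(x,x')\,\psi_B(x,x')\,dx=\int_{X/\Gamma}(\sigma_B T_B f)(x',x)\,dx,
\end{equation*}
where I have used the definition of $\sigma_B$. Reading the right-hand side through the dual quadruple $(X',X,B',\Gamma')$, this is exactly $S_{B'}(\sigma_B T_B f)(x')$, and since Proposition~\ref{Fseries}(3) (applied to the dual) identifies $S_{B'}$ with $T_{B'}^{-1}$, we get $\fb=T_{B'}^{-1}\circ\sigma_B\circ T_B$. The formula for $\fbp$ is obtained by exchanging the roles of $(X,X',B,\Gamma)$ and $(X',X,B',\Gamma')$. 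The statement that $\fb$ and $\fbp$ are mutually inverse then follows mechanically:
\begin{equation*}
\fb\circ\fbp=T_{B'}^{-1}\sigma_B T_B\cdot T_B^{-1}\sigma_{B'}T_{B'}=T_{B'}^{-1}(\sigma_B\sigma_{B'})T_{B'}=\mathrm{id},
\end{equation*}
and similarly on the other side, using (1).

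For part~(3), $T_B$ and $T_{B'}$ preserve inner products by Proposition~\ref{Fseries}(2). For $\sigma_B$, the computation is trivial: $(\sigma_B h_1)\overline{(\sigma_B h_2)}=|\psi_B|^2\, h_1\overline{h_2}=h_1\overline{h_2}$ pointwise, and the induced function on the torus is unchanged, so $\dot{\sigma_B h_1}{\sigma_B h_2}=\dot{h_1}{h_2}$. Preservation of inner products under $\fb$ and $\fbp$ then follows from their factorizations in part~(2). The only potentially delicate point in the whole argument is the bookkeeping of dualities (the ``$(x',x)$ vs.\ $(x,x')$'' order, and $B'(x',x)=-B(x,x')$), so I would be careful to fix orientations once and reuse them throughout; there is no genuine analytic obstacle.
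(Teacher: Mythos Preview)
Your proposal is correct and follows essentially the same route as the paper, which simply says that the negative sign in the definition of $B'$, equation~(\ref{fourierdec}), and Proposition~\ref{Fseries} combine to give the statement. You have just filled in the details the paper leaves implicit: the well-definedness and invertibility of $\sigma_B$, the identification of (\ref{fourierdec}) as $S_{B'}\circ\sigma_B\circ T_B$, and the unitarity of $\sigma_B$ via $|\psi_B|=1$.
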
 
Part (2) of Proposition~\ref{FPSF} shows $T_Bf=\sigma_{B'}T_{B'}\s{F}_{B'}f$
 and this reads as:

\begin{equation}\label{GPSF}
\underset{\g\in\G}{\Sigma}f(x+\g)\psi_B(\g,x')=\underset{\g'\in\G'}{\Sigma}
\fb(f)(x'+\g')\psi_{B'}(x'+\g',x)\,\,\forall f\in\sx
\end{equation}
The standard form of the Poisson formula, given below, is obtained by putting $(x,x')=(0,0)$.
\begin{equation}\label{PF}
\underset{\g\in\G}{\Sigma}f(\g)=\underset{\g'\in\G'}{\Sigma}
\fb(f)(\g')\,\,\forall f\in\sx
\end{equation}
\begin{rem}\label{extension} The space of distributions (resp. tempered distributions) on a finite dimensional real vector space $V$ is denoted by $\s{D}(V)$
 (resp. $\s{S}(V)^*$).
\\(A) We shall define $\dot{}{}:\sx^*\times\sx\to\C$ by
\[\dot{u}{f}=u(\o{f})\mbox{ for all }u\in\sx^*,f\in\sx.\]
(i) The restriction of $\dot{}{}$ to $\sx\times\sx$ is the standard inner product.
\\(ii) Every continuous linear functional on $\sx$ is given by $f\mapsto\o{\dot{u}{f}}$ for a unique
$u\in\sx^*$.
\\ \\(B) Let $\disdouble$ be the space of distributions $u$ on $X\times X'$
 that satisfy (\ref{tinvariance}).
We will define 
\[\dot{}{}:\disdouble\times\smdouble\to\C\]
as follows. For $u\in\disdouble,h\in\smdouble$, the distribution $\o{h}.u$ is invariant
under translation by $\G\times\G'$ and therefore descends to a distribution  $(\o{h}.u)_d$
on the torus $Z$ defined in (\ref{torus}). Denoting by $1_Z$ the constant function $1$ on $Z$,
 we define $\dot{u}{f}=(\o{h}.u)_d1_Z$. 
 \\(i) If $u$ is also in $\smdouble$, then this definition
 of $\dot{u}{h}$ agrees with the formula of (\ref{innsm}). 
 \\(ii) Every continuous linear functional on $\smdouble$ is given by $h\mapsto\o{\dot{u}{h}}$
  for a unique $u\in\disdouble$. One sees this by identifying $\smdouble$ with the global
  $\t{C}^{\infty}$ sections of a unitary line bundle $L$ on $Z$. The compactness of $Z$ 
  then gives the identification of $\o{\smdouble^*}$ with the space of global sections of $L\otimes D$
  where $D$ is the sheaf of distributions on $Z$. The latter space is canonically identified
  with $\disdouble$.
  
  We also have (A) and (B) above for the dual $(X',X,B',\G')$, namely
\\(A$'$) $\dot{}{}:\sxp^*\times\sxp\to\C$ and
\\(B$'$):$\dot{}{}:\disdoublep\times\smdoublep\to\C$.
\begin{prop}\label{FPSF2} For $F=\sx,\sxp,\smdouble,\smdoublep$, 
  \\let $F_e=\sx^*,\sxp^*,\disdouble,\disdoublep$ respectively.
  
  Every isomorphism $U:F_1\to F_2$ that occurs in Proposition~\ref{FPSF} extends uniquely to an isomorphism $U_e:(F_1)_e\to (F_2)_e$ that is specified uniquely by
  \begin{equation}\label{extnformula}
\dot{U_eu}{Uh}=\dot{u}{h}\mbox{ for all }u\in (F_1)_e,h\in F_1
\end{equation}
To simplify notation, the extended operator $U_e$ will be denoted once  again by $U$.

The identities
\[\fb=T_{B'}\inverse\circ\sigma_B\circ T_B\text{  and  }
\s{F}_{B'}=T_B\inverse\circ\sigma_{B'}\circ T_{B'}\] are valid for the extended operators as well.
\end{prop} 
  \begin{proof}
 Properties (i) and (ii) of $\dot{}{}:F_e\times F\to\C$ listed in Remark \ref{extension} (A) and (B), combined with the last
 assertion of Proposition~\ref{FPSF}, prove this statement.
  \end{proof}
\end{rem}
\subsection{Expression of  $T_Bu$ as a sum over $\g\in\G$ }
By Proposition \ref{FPSF2}, $T_Bu$ has been defined
for all $u\in\sx^*$. Our next goal, Theorem \ref{breakup} below, is to 
prove the validity of (\ref{tdfn}) for $u\in\sx^*$. For this purpose,
we first give meaning to the $\g$-th term in (\ref{tdfn}) for every 
distribution $u$ on $X$ in the standard manner.

In order to obtain the formula
\begin{equation}\label{Imot}
\int_{X\times X'}f(x+\g)\psi_B(\g,x')\phi(x,x')dxdx'=\int_Xf(x)I_{\g}\phi(x)dx
\end{equation}
for all $\g\in\G,f\in\sx$ and all test functions $\phi\in\test(X\times X')$, 
we define $I_{\g}\phi\in\test(X)$  by
  \begin{equation}\label{Idfn}
I_{\g}\phi(x)=\int_{X'}\psi_B(\g,x')\phi(x-\g,x')dx'.
 \end{equation}

 For a distribution $u$ on $X$ we define the distribution $u_{\g}$ on $X\times X'$ by
\begin{equation}\label{ugdfn} u_{\g}\phi=u(I_{\g}\phi)\,\forall \mbox{test functions }
\phi\in \test(X\times X').
\end{equation}
By (\ref{Imot}), we see that if $u=f\in\sx$, then $u_{\g}$ is given by the function
$(x,x')\mapsto f(x+\g)\psi_B(\g,x')$, as desired. Lemma \ref{convergence} is required 
to show that $\underset{\g\in\G}{\Sigma}u_{\g}$ converges to a distribution on $X\times X'$.

For $\phi\in\test(X\times X')$ we define
\[\semitest{\phi}{M}{N}=\sup\{\|x\|^{-m}\|x'\|^{-n}
\|\pd_x^m\pd_{x'}^n\phi\|_{\infty}:0\neq x\in X,0\neq x'\in X', 0\leq m\leq M,0\leq n\leq N\}\]
\begin{lem}\label{convergence} Let $K\subset X$ and $K'\subset X'$  
be compact subsets. Then, for every
$M,N\geq 0$, there is a constant $C(K,K',M,N)$ with the property that the inequality
\begin{equation}\label{conv0}
\underset{\g\in\G}{\Sigma}\semi{I_{\g}\phi}{M}{N}\leq C(K,K',M,N)\semitest{\phi}
{M}{N+a}
\end{equation}
holds for every $\phi\in\test(K\times K')$, with $a=1+\dim X$ and 
notation as in (\ref{seminorm}).
\end{lem}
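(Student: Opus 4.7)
The strategy is to gain polynomial decay in $\|\gamma\|$ by repeated integration by parts in the $x'$-variable, exploiting the oscillation of $\psi_B(\gamma,x')$, and then to sum over the lattice. Two elementary observations anchor the estimate. First, $I_\gamma\phi$ is supported in the translate $\gamma+K$, so on its support $(1+\|x\|)^N\leq C_{K,N}(1+\|\gamma\|)^N$. Second, any directional derivative $\pd_v^m$ (with $v\in X$) commutes through the integral in (\ref{Idfn}) and acts as $\pd_v^m\phi(x-\gamma,x')$ inside the integrand, so it does not interact with the oscillatory factor.

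Fix a basis $e'_1,\ldots,e'_d$ of $X'$ (where $d=\dim X=\dim X'$ by non-degeneracy of $B$) and set $\xi_i(\gamma)=B(\gamma,e'_i)$. Non-degeneracy gives $\|\xi(\gamma)\|\geq c\|\gamma\|$ for some $c>0$. Let $w(\gamma)=\|\xi(\gamma)\|^{-2}\sum_i\xi_i(\gamma)e'_i\in X'$; then $\|w(\gamma)\|=\|\xi(\gamma)\|^{-1}$ and the constant-coefficient operator $D_\gamma=(2\pi i)^{-1}\pd_{w(\gamma)}$ on functions of $x'$ satisfies $D_\gamma\psi_B(\gamma,\cdot)=\psi_B(\gamma,\cdot)$, because $B(\gamma,w(\gamma))=1$. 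Integrating by parts $j$ times---with no boundary terms since $\phi$ is compactly supported in $x'$---gives
\begin{equation*}
\pd_v^m I_\gamma\phi(x)=\int_{X'}\psi_B(\gamma,x')\,(D_\gamma^*)^j\pd_v^m\phi(x-\gamma,x')\,dx'.
\end{equation*}
Since $(D_\gamma^*)^j$ is a $j$-th order directional derivative in the unit direction $w(\gamma)/\|w(\gamma)\|$ of $X'$ scaled by a scalar of size $(2\pi)^{-j}\|\xi(\gamma)\|^{-j}$, combining with the support bound yields, uniformly in $x\in X$, $0\neq v\in X$, and $0\leq m\leq M$,
\begin{equation*}
(1+\|x\|)^N\|v\|^{-m}|\pd_v^m I_\gamma\phi(x)|\leq C'(K,K',M,N)\,(1+\|\gamma\|)^{N-j}\,\semitest{\phi}{M}{j}.
\end{equation*}

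Taking $j=N+a=N+1+\dim X$, the exponent becomes $-(1+\dim X)$. The lattice sum $\sum_{\gamma\in\Gamma}(1+\|\gamma\|)^{-(1+\dim X)}$ converges by the standard comparison with $\int_0^\infty(1+r)^{-(1+\dim X)}r^{\dim X-1}\,dr$, so summing the pointwise bound over $\gamma$ and using the monotonicity of $\semitest{\phi}{M}{\cdot}$ in the second index produces the desired inequality (\ref{conv0}).

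The main obstacle I expect is the bookkeeping needed to extract from the $\gamma$-dependent, constant-coefficient operator $(D_\gamma^*)^j$ a bound of the form $\|\xi(\gamma)\|^{-j}\,\semitest{\phi}{M}{j}$: one must verify that $\|(D_\gamma^*)^j\pd_v^m\phi\|_\infty$ is genuinely controlled, after the scalar prefactor is pulled out, by a mixed derivative of $\phi$ of bidegree $(m,j)$ in unit directions---precisely the data packaged in $\semitest{\phi}{M}{j}$ under the paper's normalization, in which $\|x'\|^{-n}\pd_{x'}^n$ represents an $n$-th derivative in a unit direction of $X'$. Once this identification is made, the rest is the support estimate and lattice-sum bookkeeping described above.
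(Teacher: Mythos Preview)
Your argument is correct and rests on the same mechanism as the paper's proof: decay in $\gamma$ is obtained from the oscillation of $\psi_B(\gamma,x')$ in the $x'$-integral, the weight $(1+\|x\|)^N$ is controlled by $(1+\|\gamma\|)^N$ via the support constraint, $\pd_v$ commutes through, and one finishes with the convergent lattice sum $\sum_{\gamma}(1+\|\gamma\|)^{-a}$. The only difference is packaging: the paper observes that $I_\gamma\phi(x)=\s{F}_{B'}L_{x-\gamma}(-\gamma)$ with $L_x(x')=\phi(x,x')$ and then invokes the standard Schwartz estimate $\semi{\s{F}_{B'}f'}{0}{N}\leq C_1(N)\semi{f'}{N}{a}$ as a black box, whereas you unpack that estimate by hand via the operator $D_\gamma$. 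One small point to make explicit in your write-up: the bound $\|\xi(\gamma)\|^{-j}\leq C(1+\|\gamma\|)^{-j}$ needs $\gamma\neq 0$ (use $j=0$ for the single term $\gamma=0$), but this is harmless.
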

\begin{proof} Putting $L_x(x')=\phi(x,x')$, we first note that
$I_{\g}\phi(x)=\s{F}_{B'}L_{x-\g}(-\g)$ for all $x\in X,\g\in\G$.

$\int_{X'}(1+\|x'\|)^{-a}<\infty$ and the
standard identities (\ref{weyl}) imply 
\begin{equation}\label{conv1}
\semi{\s{F}_{B'}f'}{0}{N}\leq C_1(N)\semi{f'}{N}{a}\forall f'\in\sxp
\end{equation}
for every $N$. Let $b=1+\sup\{\|x'\|:x'\in K'\}$. We may now
rewrite the above inequality for $f'=L_x$ in the form below:
\begin{equation}\label{conv2}
(1+\|y\|)^N|\s{F}_{B'}L_x(y)|\leq b^aC_1(N)\semitest{\phi}{0}{N}\mbox{ for all }
x,y\in X
\end{equation}
From the compactness of $K$, we get $c>1$ for which
\begin{equation}\label{conv3}
c\inverse (1+\|y-x\|)\leq (1+\|y\|)\leq c(1+\|y-x\|)\mbox{ for all }x\in K,
y\in X.
\end{equation}
Replacing $N$ by $N+a$ in (\ref{conv2}), we may now rewrite that inequality
in the form
\begin{equation}\label{conv4}
(1+\|y\|)^a(1+\|y-x\|)^N|\s{F}_{B'}L_x(y)|\leq b^ac^aC_1(N+a)\semitest{\phi}{0}{N+a}\mbox{ for all }
x,y\in X.
\end{equation}
Replacing $(x,y)$ in the above by $(x-\g,-\g)$ we get
\begin{equation}\label{conv5}
(1+\|\g\|)^a(1+\|x\|)^N|I_{\g}\phi(x)|\leq b^ac^aC_1(N+a)\semitest{\phi}{0}{N+a}\mbox{ for all }
x\in X,\g\in\G.
\end{equation}
Taking $C(K,K',0,N)=b^ac^aC_1(N+a)\underset{\g\in\G}{\Sigma}(1+\|\g\|)^{-a}$,
 we have proved (\ref{conv0}) when $M=0$.

We note that $I_{\g}\pd_x\phi=\pd_xI_{\g}\phi$ for all $x\in X$.
 We then see that
 the inequality (\ref{conv0}) for $(0,N)$ applied to all the partial
derivatives of $\phi$ of order at most $M$ proves (\ref{conv0}) for
$(M,N)$ as well.  

\end{proof}

\begin{lem}\label{WW} Let $\phi\in\test(X\times X')$.  Then $W\phi$ given by
\[W\phi(x,x')=\underset{\g\in\G}{\Sigma}\underset{'\g\in\G'}{\Sigma}\phi(x+\g,x'+\g')\psi_B(\g,x')\]
has the properties below
\begin{enumerate}
\item $W\phi\in\smdouble$
\item  $u(\o{\phi})=\dot{u}{W\phi}$ for all $u\in\disdouble$ with $\dot{}{}$ as given in 
remark~\ref{extension}(B)
\item $W:\test(X\times X')\to \smdouble$ is surjective.
\end{enumerate}
\end{lem}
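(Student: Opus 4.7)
The plan is to verify (1) by direct change of variable, establish (3) with an explicit right-inverse built from a partition of unity, and deduce (2) by combining that partition with the quasi-invariance properties of $u\in\disdouble$. For Part~(1), the compact support of $\phi$ makes the double sum defining $W\phi$ locally finite, so $W\phi$ is automatically \df. To check~(\ref{tinvariance}), I would replace $x'$ by $x'+\g_0'$ with $\g_0'\in\G'$ and reindex $\g'\mapsto\g'-\g_0'$; the spurious factor $\psi_B(\g,\g_0')=1$ (since $B(\g,\g_0')\in\Z$) drops out, giving $W\phi(x,x'+\g_0')=W\phi(x,x')$. Similarly, the $\G$-invariance of $\psi_B\cdot W\phi$ follows by reindexing $\g\mapsto\g-\g_0$ inside $\psi_B(x+\g_0,x')W\phi(x+\g_0,x')$: the factor $\psi_B(-\g_0,x')$ arising from the reindex combines with the prefactor $\psi_B(\g_0,x')$ from the shift to reproduce $\psi_B(x,x')W\phi(x,x')$.

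For Part~(3), fix once and for all a partition-of-unity bump $\chi_0\in\test(X\times X')$ with $\sum{\g,\g'}\chi_0(\,\cdot\,+(\g,\g'))=1$; such a $\chi_0$ is standard. Given $h\in\smdouble$, set $\phi=\chi_0\cdot h\in\test(X\times X')$. Since $h$ is $\G'$-invariant and $\psi_B\cdot h$ is $\G$-invariant, these combine to give $h(x+\g,x'+\g')=\psi_B(-\g,x')h(x,x')$, and substituting into the sum defining $W(\chi_0 h)$ causes all the $\psi_B$-factors to cancel, leaving
\[W(\chi_0 h)(x,x')=h(x,x')\sum{\g,\g'}\chi_0(x+\g,x'+\g')=h(x,x').\]
Thus $h\mapsto\chi_0\cdot h$ is an explicit right-inverse to $W$, and (3) follows.

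For Part~(2), remark~\ref{extension}(B) identifies $\dot{u}{W\phi}=(\o{W\phi}\cdot u)_d(1_Z)$ with $u(\chi_0\cdot\o{W\phi})$ for the same $\chi_0$. Because $\phi$ and $\chi_0$ both have compact support, $\chi_0\cdot\o{W\phi}$ is in fact a \emph{finite} sum of terms $\chi_0(x,x')\o{\phi(x+\g,x'+\g')}\o{\psi_B(\g,x')}$, so $u$ may be distributed across this sum. I would then apply the quasi-invariances of $u$ in succession: first the $\G'$-invariance of $u$ shifts $x'$ by $-\g'$ (the extra factor $\psi_B(\g,-\g')=1$ is absorbed), and then the identity $u(G(x+\g,x')\psi_B(-\g,x'))=u(G(x,x'))$ coming from the $\G$-invariance of $\psi_B\cdot u$ shifts $x$ by $-\g$. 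The cross $\psi_B$-terms cancel via $B(\g,\g')\in\Z$, reducing each summand to $u(\chi_0(x-\g,x'-\g')\o{\phi(x,x')})$. Resumming with $\sum{\g,\g'}\chi_0(x-\g,x'-\g')=1$ produces $u(\o{\phi})$, as required. The main subtlety is precisely this bookkeeping step: one must carefully track each $\psi_B$-factor through the two shifts and confirm the cancellation, while the finite-support observation is what legitimises the term-by-term action of $u$.
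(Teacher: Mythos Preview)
Your proof is correct and takes a genuinely different route from the paper's. The paper localises: it first restricts to test functions $\phi$ supported in an open set $\Omega$ whose $\G\times\G'$-translates are pairwise disjoint, characterises $W\phi$ there by the property $h|_\Omega=\phi|_\Omega$ with $\supp(h)\subset P^{-1}P\Omega$, and replaces $1_Z$ by a bump function $g$ on $Z$ supported near $P\Omega$ before extending by linearity. You instead work globally with a single partition-of-unity function $\chi_0$, obtaining an explicit right inverse $h\mapsto\chi_0 h$ for $W$ and reducing (2) to a finite reindexing computation using the two quasi-invariances of $u$. The paper's argument is more geometric (closer to the line-bundle picture in remark~\ref{extension}(B)); yours is more elementary and has the advantage that the surjectivity in (3) comes with an explicit section. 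One small point worth making explicit in your write-up: the identity $(\o{W\phi}\cdot u)_d(1_Z)=u(\chi_0\cdot\o{W\phi})$ is the standard characterisation of descent of a $\G\times\G'$-invariant distribution via a partition of unity, not something stated verbatim in remark~\ref{extension}(B), so a one-line justification there would be appropriate.
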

\begin{proof}Let $\Omega\subset X\times X'$ be an open subset whose translates by
all $(\g,\g')\in\G\times\G'$ are all disjoint. With $Z$ as in (\ref{torus}), 
let $P:X\times X'\to Z$ denote the projection.  Let $\phi$ be a test function with $\supp(\phi)\subset\Omega$. We observe
\begin{equation}\label{WW1}
h\in\smdouble, \,\,h|\Omega=\phi|\Omega,\,\,\supp(h)\subset P\inverse P\Omega
\iff h=W\phi. 
\end{equation}
Let $u\in\disdouble$. Recall that $\o{W\phi}.u$ descends 
to a distribution $(\o{W\phi}.u)_d$ on $Z$. We have  $\dot{u}{W\phi}=(\o{W\phi}.u)_d1_Z$.  Note that  $(\o{W\phi}.u)_d1_Z=(\o{W\phi}.u)_dg$
if $g:Z\to\C$ is a \df function such that $g(z)=1$ on an open subset of 
$Z$ that contains $P\t{supp}(\phi)$. 
To obtain such a $g$, we choose
another test function $\phi'$ on $X\times X'$  satisfying
\[\supp(\phi')\subset\Omega\mbox{ and }\supp(\phi)\subset\subset (\phi')\inverse\{1\}\]
and let $g:Z\to\C$ be the unique function with $\supp(g)\subset P(\Omega)$ and
$\phi'(z)=g(P(z))$ for all $z\in\Omega$. We then have
\[\dot{u}{W\phi}= (\o{W\phi}.u)_dg= (\o{\phi}.u)(\phi')=u(\o{\phi}.\phi')=u(\o{\phi})\]  
This proves the first two assertions for such $\phi$. The linear span of 
$\test(\Omega)$, taken over all $\Omega$ as above, is the collection of all test functions. Thus the first two assertions of the lemma follow from linearity.
The same reasoning, combined with (\ref{WW1}), proves the third assertion
 as well.
\end{proof}
\begin{lem}\label{JW}  Let $J_{\g}\o{\phi}=\o{I_{\g}\phi}$ for all
$\phi\in\test(X\times X'),\g\in\G$. For every $x\in X$, the finite sum 
$\underset{\g\in\G}{\Sigma}J_{\g}(x)$ equals $\int_{X'/\G'}W(x,x')$.
\end{lem}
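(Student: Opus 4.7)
The plan is a direct computation that unfolds both sides of the proposed equality and matches them term by term. First, I would expand the left-hand side using the definitions: for a fixed test function $\phi\in\test(X\times X')$,
\[
J_\g\o\phi(x)=\o{I_\g\phi(x)}=\overline{\int_{X'}\psi_B(\g,x')\phi(x-\g,x')\,dx'}=\int_{X'}\psi_B(-\g,x')\o\phi(x-\g,x')\,dx'.
\]
Since $\phi$ has compact support, there is a compact $K\subset X$ such that $I_\g\phi(x)=0$ unless $x-\g\in K$; for each fixed $x$ this forces $\g$ to lie in the finite set $(x-K)\cap\G$, which justifies the assertion that the sum $\sum_{\g\in\G}J_\g\o\phi(x)$ is a finite sum.

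Next I would expand the right-hand side by substituting the definition of $W$. The key arithmetic observation is the one supplied by the definition of $\G'$ in (\ref{dual}): for $\g\in\G$ and $\g'\in\G'$ one has $B(\g,\g')\in\Z$, whence
\[
\psi_B(\g,x'+\g')=\psi_B(\g,x')\psi_B(\g,\g')=\psi_B(\g,x').
\]
Hence the factor $\psi_B(\g,x')$ in $W\o\phi(x,x')$ is $\G'$-periodic in $x'$, which lets me unfold the sum over $\g'\in\G'$ together with the integral over $X'/\G'$ into a single integral over all of $X'$:
\[
\int_{X'/\G'}W\o\phi(x,x')\,dx'=\sum_{\g\in\G}\int_{X'}\o\phi(x+\g,x')\psi_B(\g,x')\,dx'.
\]
The interchange of sum and integral is legitimate because for each $x$ all but finitely many summands vanish on the bounded region of integration (or, equivalently, Fubini applies to the compactly supported $\o\phi$).

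Finally, I would relabel $\g\mapsto -\g$ in the right-hand side, which turns $\psi_B(\g,x')$ into $\psi_B(-\g,x')$ and $\o\phi(x+\g,x')$ into $\o\phi(x-\g,x')$; the resulting expression is exactly the one computed above for $\sum_{\g\in\G}J_\g\o\phi(x)$. I expect no serious obstacle: the only thing one has to be careful about is tracking the conjugates and the sign conventions in (\ref{dual}) and (\ref{exp}), together with the (routine) justification that the sum over $\g$ is actually finite for each $x$ so that rearrangement and Fubini apply without fuss.
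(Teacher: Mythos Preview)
Your argument is correct and is essentially the same unfolding computation as the paper's: both convert $\int_{X'/\G'}\sum_{\g'\in\G'}$ into $\int_{X'}$ using the $\G'$-periodicity $\psi_B(\g,x'+\g')=\psi_B(\g,x')$, and then match with the formula for $J_\g$. The only cosmetic difference is that the paper names the intermediate partial sum $A_\g\phi(x,x')=\sum_{\g'}\psi_B(-\g,x')\phi(x-\g,x'+\g')$ and records $J_\g\phi=\int_{X'/\G'}A_\g\phi$ and $W\phi=\sum_\g A_\g\phi$ separately (with the substitution $\g\mapsto -\g$ absorbed into the definition of $A_\g$), whereas you perform the same steps inline and relabel $\g\mapsto -\g$ at the end.
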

\begin{proof} Now $J_{\g}\phi(x)=\int_{X'}\psi_B(-\g,x')\phi(x-\g,x')dx'$ . Once again, this
integral may be computed by first summing over $x'+\G'$ and then integrating the resulting
 function on $X'/\G'$. So we get:
\begin{equation}\label{J2}
A_{\g}\phi(x,x')=\underset{\g'\in\G'}{\Sigma}\psi_B(-\g,x')\phi(x-\g,x'+\g')
\mbox{ and }J_{\g}\phi(x)=\int_{X'/\G'}A_{\g}\phi(x,x')
\end{equation}
and now summing the above over $\g\in\G$, we get
\begin{equation}\label{J3}
W\phi(x,x')=\underset{\g\in\G}{\Sigma}A_{\g}\phi(x,x')
\mbox{ and }\underset{\g\in\G}{\Sigma}J_{\g}\phi(x)=
\int_{X'/\G'}W\phi(x,x').
\end{equation}
\end{proof} 
For a tempered distribution $u$ on $X$, part (4) of Theorem \ref{breakup} expresses $T_Bu$ as a sum of distributions on $X\times X'$. Part (5) of the theorem 
is the identity $T_Bu=\sigma_{B'}T_{B'}\fb u$, which is the generalized Poisson formula  (\ref{GPSF}) when $u=f\in\sx$.

\begin{thm}\label{breakup} \textbf{(Poisson Formula for tempered distributions)}
\noindent\\
Let $u$ be a tempered distribution on $X$.
\begin{enumerate}
\item Let $S$ be a subset of  $\G$. For every $\phi\in\test(X\times X')$, the
series $\underset{\g\in S}\Sigma u_{\g}\phi$ converges absolutely (see
 (\ref{ugdfn}) for the definition of $u_{\g}$). 
\item Denote the sum of the above series by $u_S\phi$. 
Then $\phi\mapsto u_S\phi$ defines a distribution on $X\times X'$. 
\item If $S$ is the disjoint union of subsets $S',S''\subset\G$, then 
$u_S=u_{S'}+u_{S''}$.
\item The distribution $T_Bu$ given by Proposition \ref{FPSF2} equals 
the above $u_S$ 
when $S=\G$.
\item $u_{\G}=\sigma_{B'}(\fb u)_{\G'}$.
\end{enumerate}

\end{thm}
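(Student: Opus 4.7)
\medskip

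\noindent\textbf{Proof proposal.} The plan is to push through (1)--(3) as routine applications of Lemma \ref{convergence}, then to treat (4) as the main content, with (5) a formal consequence of (4) together with Proposition \ref{FPSF}.

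For (1), since $u$ is tempered, there exist $M,N$ and a constant $C$ such that $|u(\psi)|\leq C\semi{\psi}{M}{N}$ for all $\psi\in\sx$. Because $I_\gamma\phi\in\test(X)\subset\sx$, Lemma \ref{convergence} applied to a test function $\phi$ supported in a fixed $K\times K'$ gives
\[
\underset{\g\in S}{\Sigma}|u_\g\phi|\;\leq\;C\underset{\g\in\G}{\Sigma}\semi{I_\g\phi}{M}{N}\;\leq\;CC(K,K',M,N)\,\semitest{\phi}{M}{N+a},
\]
which proves (1) and, by the same estimate, shows that $\phi\mapsto u_S\phi$ is continuous for test functions supported in $K\times K'$, giving (2). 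Statement (3) is immediate from absolute convergence.

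The substance is (4): we must show that the distribution $T_Bu\in\disdouble$ delivered by Proposition \ref{FPSF2}, viewed as a distribution on $X\times X'$, agrees with $u_\G$. By Lemma \ref{WW}(2), checking equality on a test function $\phi$ amounts to checking $\langle T_Bu,W\phi\rangle=u_\G(\o{\phi})$. Since $W\phi\in\smdouble$ and $T_B:\sx\to\smdouble$ is an isomorphism with inverse $S_B$ (Proposition \ref{Fseries}), we may write $W\phi=T_B(S_BW\phi)$ and apply the intertwining identity of Proposition \ref{FPSF2} to obtain $\langle T_Bu,W\phi\rangle=\langle u,S_BW\phi\rangle$. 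By Lemma \ref{JW}, $S_BW\phi(x)=\int_{X'/\G'}W\phi(x,x')dx'=\underset{\g\in\G}{\Sigma}J_\g\phi(x)$, and the $\sx$-bounds from Lemma \ref{convergence} (noting that $J_\g\phi=\o{I_\g\o{\phi}}$ has the same seminorms as $I_\g\o{\phi}$) show this series converges in $\sx$. Continuity of $u$ then allows the interchange
\[
\langle u,S_BW\phi\rangle\;=\;\underset{\g\in\G}{\Sigma}\langle u,J_\g\phi\rangle\;=\;\underset{\g\in\G}{\Sigma}u(I_\g\o{\phi})\;=\;u_\G(\o{\phi}),
\]
which is exactly what was required. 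I expect this interchange, and more precisely the need to verify that $\underset{\g\in\G}{\Sigma}J_\g\phi$ converges in the Schwartz topology rather than merely pointwise, to be the main obstacle; Lemma \ref{convergence} is tailored for exactly this.

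For (5), apply (4) to the dual tuple $(X',X,B',\G')$ and the tempered distribution $\fb u$ to get $(\fb u)_{\G'}=T_{B'}\fb u$. By Proposition \ref{FPSF}(2) (which remains valid for the extensions by the last sentence of Proposition \ref{FPSF2}) we have $T_{B'}\fb=\sigma_B T_B$, so $\sigma_{B'}(\fb u)_{\G'}=\sigma_{B'}\sigma_BT_Bu=T_Bu$, and (4) finishes the identification with $u_\G$.
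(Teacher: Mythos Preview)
Your proof is correct and follows essentially the same route as the paper: parts (1)--(3) via Lemma \ref{convergence}, part (4) by reducing to the identity $\langle T_Bu,W\phi\rangle=\langle u,S_BW\phi\rangle$ and then invoking Lemma \ref{JW}, and part (5) by duality and Proposition \ref{FPSF2}. You are in fact more explicit than the paper about the one genuine analytic point---that $\sum_{\g}J_\g\phi$ converges in the Schwartz topology so that $u$ can be pulled through the sum---which the paper's chain of equalities leaves implicit.
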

\begin{proof}Let $u\in\sx^*$. By definition, there are some $M,N,C$
 so that $|u(f)|\leq C\semi{f}{M}{N}$. Recall that $u_{\g}\phi=u(I_{\g}\phi)$
for all $g\in\G$.  Lemma \ref{convergence} now shows that 
\[\sum{\g\in S}|u(I_{\g}\phi)|\leq C.C(K,K',M,N)\semitest{\phi}{M}{N+a}\forall
\phi\in\test(K\times K').\]
That proves part (1). The same upper bound holds
for $|u_S\phi|$ as well, and now part (2) follows from the definition of a distribution.
Part (3) is evident. 

We now address part (4). It is clear that $u_{\G}$ is in $\disdouble$.
By Proposition \ref{FPSF2}, the identity $T_Bu=u_{\G}$ is equivalent
to the equality $\dot{u_{\G}}{T_Bf}=\dot{u}{f}$ for all $f\in\sx$. 
Now $S_B$ in Proposition \ref{FPSF} is surjective, and so is $W$ (see
 lemma \ref{WW}(3). So it suffices to check this equality for $f=S_BW\phi$ for all $\phi\in\test(X\times X')$. 
In other words, we have to check
\[
\dot{u_{\G}}{W\phi}=\dot{u}{S_BW\phi}\mbox{ for all }\phi\in\test(X\times X')
\] 
We note
\[\dot{u_{\G}}{W\phi}=u_{\G}\o{\phi}=u(\underset{\g\in\G}{\Sigma}I_{\g}\o{\phi})=u(\underset{\g\in\G}{\Sigma}\o{J_{\g}\phi)}=u\int_{X'/\G'}\o{W\phi}=
u(\o{S_BW\phi})=\dot{u}{S_BW\phi}
\]
from \ref{WW}(2) and \ref{JW}, and this completes the proof of part (4). 

The validity of part (4) for both $(X,X',B,\G)$ and  $(X',X,B',\G')$, combined with Proposition \ref{FPSF2} 
 now implies part (5), and therefore completes the proof of the theorem.

\end{proof}
\subsection{Removal of singularities and the definition of $T^{reg}_Bu$}

\begin{lem}\label{consistency}Let $u\in\sx^*$. Let $U\subset X$ be an open subset so that the restriction $u|U$ is given by a continuous function $f:U\to\C$.
Let $A$ be a subset of $\G$. Let 
$\Omega\subset X$ be an open subset so that $\Omega+A\subset U$.  Assume further that
$\underset{\g\in A}{\Sigma}\|f|_{\g+\Omega}\|_{\infty}<\infty$. 
Then the distribution $u_A$ given in Theorem \ref{breakup}
is given by the continuous function
$x\mapsto \underset{\g\in A}{\Sigma}f(x+\g)\psi_B(\g,x')$  on $\Omega\times X'$.

\end{lem}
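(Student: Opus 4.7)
The plan is to evaluate $u_A$ against an arbitrary test function $\phi\in\test(\Omega\times X')$ and recognise the result as integration against the claimed continuous function. By parts (2) and (3) of Theorem~\ref{breakup}, $u_A\phi=\sum{\g\in A}u_\g\phi$, where $u_\g\phi=u(I_\g\phi)$ and $I_\g\phi$ is the test function defined in (\ref{Idfn}).

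First I would observe that for $\phi\in\test(\Omega\times X')$ the support of $I_\g\phi\in\test(X)$ is contained in $\g+\Omega\subset U$. Since $u|U$ is represented by the continuous function $f$, this lets us compute $u(I_\g\phi)$ as an honest integral; a short application of Fubini combined with the substitution $x=y+\g$ then yields
\[
u_\g\phi \;=\; \int_{\Omega\times X'} f(y+\g)\,\psi_B(\g,x')\,\phi(y,x')\,dy\,dx'.
\]

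Next I would sum over $\g\in A$. The hypothesis $\sum{\g\in A}\|f|_{\g+\Omega}\|_\infty<\infty$ has two consequences. Because $|\psi_B|=1$, the series $g(y,x'):=\sum{\g\in A} f(y+\g)\,\psi_B(\g,x')$ converges uniformly on $\Omega\times X'$ and therefore defines a continuous function $g$ there. Moreover, $(y,x')\mapsto\bigl(\sum{\g\in A}\|f|_{\g+\Omega}\|_\infty\bigr)|\phi(y,x')|$ is an integrable majorant on $\Omega\times X'$ since $\phi$ has compact support. Dominated convergence therefore legitimates the exchange of sum and integral, giving
\[
u_A\phi \;=\; \int_{\Omega\times X'} g(y,x')\,\phi(y,x')\,dy\,dx',
\]
which is precisely the statement that the restriction of $u_A$ to $\Omega\times X'$ is given by the continuous function $g$.

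The only step that is more than routine bookkeeping is the first one: one must verify that the support of $I_\g\phi$ really lies in $U$, so that the representation of $u|U$ by $f$ may be invoked. Once this is done, the rest is a standard application of Fubini and dominated convergence, with the summability hypothesis providing exactly the uniform bound required to interchange the sum with the integral.
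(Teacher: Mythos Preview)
Your proposal is correct and follows essentially the same route as the paper's proof: verify that $\supp(I_\g\phi)\subset\g+\Omega\subset U$ so that $u(I_\g\phi)=\int_X f\cdot I_\g\phi$, rewrite this via the substitution $x\mapsto x+\g$ (which is exactly the identity~(\ref{Imot})), and then sum over $\g\in A$. You are in fact more careful than the paper, which simply writes ``Summing over $\g$'' without spelling out the dominated-convergence justification or the uniform convergence giving continuity of the limiting function; your explicit use of the majorant $\bigl(\sum{\g\in A}\|f|_{\g+\Omega}\|_\infty\bigr)|\phi|$ fills that small gap.
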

\begin{proof}
Let $\phi\in\test(\Omega\times X')$ and let $\g\in A$. We observe that 
$I_{\g}\phi\in\test(U)$ and also that  equation (\ref{Imot}) holds
 (in fact for any continuous function $f:U\to\C$).
It then follows from the definition of $u_{\g}$ given in
(\ref{ugdfn}) that 
\[u_{\g}\phi=
\int_{X\times X'}f(x+\g)\psi_B(\g,x'))\phi(x,x')dxdx'.
\]
Summing over $\g\in S$ we obtain
\[
T_B^Su(\phi)=
\int_{X\times X'}(\underset{\g\in S}{\Sigma}f(x+\g)\psi_B(\g,x'))\phi(x,x')dxdx'
\]
for all test functions $\phi$ with support contained in $\Omega\times X'$.
\end{proof} 
In particular, taking $A=\{\g\}$, we see that if $\g$ is not in the singular
support of $u$, then $u_{\g}|U\times X'$ is \df for a suitable neighbourhood
$U$ of zero in $X$. This leads to the definition below.
\begin{dfn}\label{regdfn}  The regularized distribution $T^{reg}_Bu$  is defined for 
$u\in\sx^*$ under the assumption that the singular supports of $u$ and 
$\fb u$, denoted by $K$ and $K'$ respectively, are both compact. Let
\[
S=(\G\cap K)\cup\{0\}\mbox{ and }S'=(\G'\cap K')\cup\{0\}.
\] 

We define

\[T^{reg}_Bu:=T_Bu-\underset{\g\in S}{\Sigma}u_{\g}-\sigma_{B'}\underset{\g'\in
S'}{\Sigma}
(\fb u)_{\g'}
\]
For the above formula, one should note that $u'_{\g'}$ and $u'_{S'}$ are defined for
arbitrary $u'\in \sxp^*,\g'\in\G',S'\subset \G'$ by applying (\ref{ugdfn}) and \ref{breakup}(2)
to the dual $(X',X,B',\G')$.
\end{dfn}
There is no real reason here to throw $0$ into the definition of $S$
 and $S'$. This has been done for the sole purpose of obtaining
consistency with the usage of $T^{reg}u$ in the next section. 
\begin{prop}\label{triple}  Let $u=v+f+\s{F}_{B'}v'$ where $f\in\sx$ and
$v$ and $v'$ are compactly supported distributions on $X$ and $X'$
 respectively. Then 
\begin{enumerate}
\item
$T_B^{reg}u$ is defined, and is 
\df on $U\times U'$ where $U$ and $U'$ are suitable neighbourhoods of $0$
 in $X$ and $X'$ respectively. 
\item  Let $K,K',S,S'$ be as in definition \ref{regdfn}.
Then part (1) holds for  $U$ and $U'$ with
\[X\setminus U=\{x-\g:x\in K,\g\in\G\setminus S\}\mbox{ and }
X'\setminus U'=\{x'-\g':x'\in K',g'\in\G'\setminus S'\}.\]
\end{enumerate}
\end{prop}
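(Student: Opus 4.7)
The plan is to verify that the hypotheses make $K$ and $K'$ compact, rewrite $T_B^{reg}u$ so the genuinely singular contributions are isolated, and then apply Lemma \ref{consistency} locally to deduce smoothness. First, since $f\in\sx$ has empty singular support and $\fb v,\s{F}_{B'}v'$ are smooth on all of $X'$ and $X$ respectively (being Fourier transforms of compactly supported distributions), the singular support of $u$ coincides with that of $v$ and the singular support of $\fb u$ coincides with that of $v'$; both are compact, so $T_B^{reg}u$ is defined. For part (2), I would check that the specified $U$ is an open neighborhood of $0$: if $0=x-\g$ with $x\in K$ and $\g\in\G\setminus S$, then $\g=x\in\G\cap K\subset S$, a contradiction; and the union $\bigcup_{\g\in\G\setminus S}(K-\g)$ is closed because in any convergent sequence $y_n=x_n-\g_n$ with $x_n\in K$, the $\g_n$ lie in a bounded subset of $\G$, hence in a finite set. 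The same reasoning produces $U'$.

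The key algebraic reduction I would use is the identity
\[T_B^{reg}u=u_{\G\setminus S}-\sigma_{B'}(\fb u)_{S'},\]
which follows from Theorem \ref{breakup}(3,4) and Definition \ref{regdfn}. Substituting $u=v+f+\s{F}_{B'}v'$ and using Theorem \ref{breakup}(5) on the third summand so that $(\s{F}_{B'}v')_\G=\sigma_{B'}(v')_{\G'}$, every contribution involving only $f$, $\fb f$, $\s{F}_{B'}v'$, or $\fb v$ is smooth on all of $X\times X'$: $T_Bf\in\smdouble$ by Proposition \ref{Fseries}, and each of the finite sums $\sum_{\g\in S}f_\g$, $\sum_{\g\in S}(\s{F}_{B'}v')_\g$, $\sum_{\g'\in S'}\sigma_{B'}(\fb v)_{\g'}$, $\sum_{\g'\in S'}\sigma_{B'}(\fb f)_{\g'}$ is built from pointwise products of smooth functions. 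Collecting terms I expect to reach
\[T_B^{reg}u=(\text{smooth on }X\times X')+v_{\G\setminus S}+\sigma_{B'}(v')_{\G'\setminus S'}.\]

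It remains to show that $v_{\G\setminus S}$ is smooth on $U\times X'$; the symmetric statement for $\sigma_{B'}(v')_{\G'\setminus S'}$ on $X\times U'$ follows by the same argument applied to the dual $(X',X,B',\G')$, after which the sum is smooth on $U\times U'$ as required. I would cover $U$ by bounded open sets $\Omega$ with $\overline{\Omega}\subset U$. For each $\g\in\G\setminus S$ one has $\overline{\Omega}+\g\subset X\setminus K$ by construction of $U$, so $v$ restricts there to a smooth function, bounded on the compact set $\overline{\Omega}+\g$. Because $\supp v$ is compact, only finitely many $\g$ satisfy $(\Omega+\g)\cap\supp v\neq\emptyset$, so the series is locally a finite sum. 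Lemma \ref{consistency}, with its $U$ taken to be $X\setminus K$, its $A$ to be $\G\setminus S$, and its $\Omega$ our $\Omega$, then identifies $v_{\G\setminus S}|_{\Omega\times X'}$ with the finite smooth sum $\sum_\g v(x+\g)\psi_B(\g,x')$. Patching across such $\Omega$ exhausting $U$ finishes the argument.

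I expect the main obstacle to be precisely this last step: the defining series for $v_{\G\setminus S}$ is a priori infinite, and $v|_{X\setminus K}$ need not even be bounded near $K$. The resolution is that compactness of $\supp v$ renders the sum locally finite on each bounded piece of $U$, while the definition of $U$ in part (2) keeps the finitely many contributing translates $\g+\overline{\Omega}$ strictly away from $K$, so each individual summand is smooth.
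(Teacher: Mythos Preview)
Your proof is correct and follows essentially the same strategy as the paper: decompose $T_B^{reg}u$ so that everything is manifestly smooth except $v_{\Gamma\setminus S}$ and $\sigma_{B'}(v')_{\Gamma'\setminus S'}$, then use local finiteness (via Lemma \ref{consistency}) to handle these two terms. The paper reaches the same two remaining terms by expanding $T_B^{reg}u=(T_B+P+Q)(v+f+\mathcal{F}_{B'}v')$ into nine pieces and observing that five are globally smooth while the other four combine to $(T_B+P)v=v_{\Gamma\setminus S}$ and $(T_B+Q)\mathcal{F}_{B'}v'=\sigma_{B'}(v')_{\Gamma'\setminus S'}$; your identity $T_B^{reg}u=u_{\Gamma\setminus S}-\sigma_{B'}(\mathcal{F}_B u)_{S'}$ is a more compact route to the same endpoint.

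The one genuine organizational difference is in part (2). The paper first proves smoothness on some small $U$, then shrinks the support of $v$ by a cutoff $\phi$ (writing $v+f=\phi v+(f+(1-\phi)v)$) to push $U$ out to the explicit set in the statement. You bypass this by working directly on the specific $U$ of part (2): since $\overline{\Omega}+\gamma\subset X\setminus K$ for every $\gamma\in\Gamma\setminus S$ and every bounded $\overline{\Omega}\subset U$, Lemma \ref{consistency} applies immediately with $A=\Gamma\setminus S$. This is slightly cleaner and avoids the auxiliary cutoff, at the cost of having to verify up front that the explicit $U$ is open and contains $0$, which you do correctly.
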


\begin{proof} 
Thanks to Paley-Weiner-Schwartz (see pp 180-185, Chapter 8\cite{R}), we see that $\fb v$
 and $\s{F}_{B'}v'$ are both \df. It follows that the singular supports of
 $u$ and $\fb u$ are precisely the singular supports of $v$ and $v'$ respectively. These sets are compact, and therefore $T^{reg}_Bu$ is defined. We
retain the notation $K,K',S,S'$ introduced in \ref{regdfn}.

For $w\in\sx^*$, let 
\[-Pw=\underset{\g\in S}{\Sigma}w_{\g}\mbox{ and } -Qw=
\sigma_{B'}\underset{\g'\in S'}{\Sigma}(\fb w)_{\g'}.\]

Now $T_B^{reg}u$ is the sum of nine terms, obtained by
applying the three operators $T_B,P,Q$ to the three distributions
 $v,f,\s{F}_{B'}v'$.

Five of these, namely $T_Bf,Pf,Qf,Qv$ and $P\s{F}_{B'}v'$, 
are evidently \df on all of $X\times X'$.  We claim that
$(T_B+P)v, (T_B+Q)\s{F}_{B'}v'$ are \df on
             $U\times X'$ and $X\times U'$ 
respectively, where $U\subset X$ and $U'\subset X'$
are suitable neighborhoods of zero.  This claim implies  part (1) of the proposition.

To check this claim, let $L$ be the support of $v$, and define
the subsets $C,D\subset\G$ by 
\[\{0\}\cup (L\cap \G)=S\sqcup C\mbox{ and } \G=S\sqcup C\sqcup D.\]
 By \ref{breakup}, we see that $T_Bv=v_S+v_C+v_D$.
If $U$ is small enough, we see that $v_{\g}|U\times X'$ is 
\\(i) \df if $\g\in C$, and 
\\(ii) is zero if $\g\in D$.

From the definition of $v_D$ in \ref{breakup} it follows that 
$v_D|U\times X'$ is zero.
 The finiteness of $C$ shows that $v_C$ is \df on $U\times X'$. 
Now $Pv=-v_S$, so we see that $(T_B+P)v$ equals $v_C$ which
is \df on $U\times X'$ as claimed. 

To go further,  we note that (i) and (ii) above 
are valid when  $U$ is the complement of the union of $R_1=\{x-\g:x\in K,\g\in C\}$
 and $R_2=\{x-\g:x\in L,\g\in D\}$.  We may express $v+f$ as
$v_1+f_1$ where $v_1=\phi.v$ and $f_1=f+(1-\phi).v$ for a test function $\phi$
that is 1 on an open subset $V$ that contains $K$. Now the support
of $v_1$ is contained in $V$. It follows that
we may replace $L$ by $K$ in the definition of $R_2$. Because $S\sqcup C\sqcup
D=\G$, we deduce that
$U$ can be chosen to be the complement of
$\{x-\g:x\in K,\g\in\G,\g\notin S\}$.

With  $ r=T_{B'}v'-\sum{\g'\in S'}v'_{\g'}$ 
the same argument for the dual $(X',X,B',\G')$ shows that $r$ is \df on
$U'\times X$ with $U'$ as in part (2) of the proposition.  Because $(T_B+Q)\s{F}_{B'}v'$ 
equals $\sigma_{B'}r$, we see that the remaining half of the claim has also been proved.
This completes the proof of the proposition.

\end{proof}

\section{The Poisson formula for mild singularities}
We apply the considerations of the previous section to the spaces of distributions
given below. The notation and operators introduced there will be employed here as well.

The main result of this section is Theorem \ref{mainthm} formulated for distributions that belong to the space $\HH(X)$. 
The spaces of distributions relevant for this theorem are given in \S6.1. Proposition \ref{Htriple}, part 3, says that 
a distribution $u$
 on $X$ belongs to $\HH(X)$ if and only if $u$ and its Fourier transform are well-behaved at infinity in a certain technical sense (see (\ref{Hdfn1}) below). Part 2 of the same proposition expresses $u\in\HH(X)$ as a sum of three distributions. One checks  the validity of Theorem \ref{mainthm} for each of them separately. Theorem \ref{mainthmintro} is then deduced from Theorem \ref{mainthm}. 
 
 Subsection 6.4 has more definitions of translation invariant integrals and sums. Proposition \ref{strong} relates these definitions with the t-summability of \S 2.1.

\subsection{Spaces of Distributions}
\begin{dfn}\label{Hdfn} We will define six spaces of distributions
on $X$. \\Let $u$ be a distribution on $X$. 
\\(I)  $u$ belongs to $\HH_{\infty}(X)$  if $u$ is \df on the region $\|x\|>R$ for some $R>0$, and
there is some real number $p$ with the property
\begin{equation}\label{Hdfn1} 
\|x\|^m\pd_v^mu(x)| \mbox{ is } \O(\|x\|^{p})\mbox{ as }\|x\|\to\infty\mbox{  for all }v\in X, m\geq 0
\end{equation}
\\(II) $u$ belongs to $\HH_0(X)$ if $u$ is \df on the region $0<\|x\|<R$ for some $R>0$, and
there is some real number $p$ with the property that  
\begin{equation}\label{Hdfn2} 
\|x\|^m\pd_v^mu(x)| \mbox{ is } \O(\|x\|^{p})\mbox{ as }\|x\|\to 0\mbox{  for all }v\in X, m\geq 0
\end{equation}
\\(III) $u$ is in $\HH(X)$ if 
\\(a) $u$ is \df on the region $0\neq x\in X$ and 
\\(b)  $u$ belongs to both $\HH_0(X)$ and  $\HH_{\infty}(X)$. Note, however,
that the $p$'s that appear for $\|x\|\to\infty$ and $\|x\|\to 0$ may
be different from each other.
\\(IV)  $\HH^+_{\infty}(X)=\t{C}^{\infty}(X)\cap \HH_{\infty}(X)$. 
\\(V) $u$ belongs to $\HH_0^+(X)$ if it satisfies
the three conditions: 
\\(i) $u$ is compactly supported, 
\\(ii) the singular support of $u$ is contained in $\{0\}$
\\(iii) $u\in \HH_0(X)$.
\\(VI) $\s{D}_c(X)$ is the space of compactly supported distributions on $X$.
 \end{dfn}
Note that if $u$ belongs to $\HH_{\infty}(X)$, then $u$ is certainly a tempered
distribution, and therefore $\fb u$ is defined.
\begin{prop}\label{Hprop}
$u\in\HH^+_{\infty}(X)\iff\fb u\in\sxp+\HH^+_0(X')$.

\end{prop}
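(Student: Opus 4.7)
My plan is to prove both implications by transferring the ``smooth with polynomial growth of derivatives'' structure through the Fourier transform, exploiting in both directions the identity between multiplication by powers of $\|x\|$ and application of powers of $(-\Delta)$. Since $\fb$ and $\s{F}_{B'}$ are mutual inverses by Prop.~\ref{FPSF}(2), the two directions are computationally parallel: cut off the benign part with a bump function, reduce the rest to a Paley--Plancherel estimate, and recombine.

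For $(\Rightarrow)$, pick a smooth compactly supported cutoff $\phi$ on $X$ equal to $1$ near $0$. Writing $u = \phi u + w$ with $w = (1-\phi)u$, the fact that $u \in C^\infty(X)$ puts $\phi u$ into $\test(X) \subset \sx$, so $\fb(\phi u) \in \sxp$. The remaining piece $w$ is smooth on $X$, vanishes near $0$, and satisfies $\|x\|^m |\partial_\xi^m w(x)| = O(\|x\|^p)$ at infinity for a fixed $p$. I will show that $\fb w$ is $C^\infty$ off $\{0\}$, decays rapidly at infinity with all derivatives, and satisfies an $\HH_0$-type bound near $0$. The driving computation is
\[
(2\pi\|x'\|)^{2N}\partial_{x'}^\alpha \fb w(x') = (2\pi i)^{|\alpha|}\fb\bigl((-\Delta)^N(x^\alpha w)\bigr)(x'),
\]
where $(-\Delta)^N(x^\alpha w)$ is smooth, vanishes near $0$, and has decay $O(\|x\|^{p+|\alpha|-2N})$, so it lies in $L^1$ once $2N > p + |\alpha| + \dim X$; its Fourier transform is then bounded. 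Letting $N$ grow at fixed $\alpha$ yields rapid decay of $\partial_{x'}^\alpha \fb w$ at infinity, while taking the \emph{minimal} admissible $N$ gives $\|x'\|^{|\alpha|}|\partial_{x'}^\alpha \fb w(x')| = O(\|x'\|^{-p-\dim X})$ as $\|x'\| \to 0$, with exponent independent of $\alpha$. A second cutoff $\psi$ on $X'$ (equal to $1$ near $0$, compactly supported) then splits $\fb w = \psi\fb w + (1-\psi)\fb w$, with $\psi\fb w \in \HH_0^+(X')$ and $(1-\psi)\fb w \in \sxp$.

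For $(\Leftarrow)$, decompose $\fb u = g + h$ with $g \in \sxp$ and $h \in \HH_0^+(X')$, so $u = \s{F}_{B'}g + \s{F}_{B'}h$; the first summand already lies in $\sx \subseteq \HH_\infty^+(X)$. Compact support of $h$ forces $\s{F}_{B'}h \in C^\infty(X)$ by Paley--Wiener--Schwartz. The dual identity
\[
(2\pi\|x\|)^{2N}\partial_x^\alpha \s{F}_{B'}h(x) = (-2\pi i)^{|\alpha|}\s{F}_{B'}\bigl((-\Delta)^N(x'^\alpha h)\bigr)(x)
\]
reduces matters to an $L^1$ bound near $0$: off $\{0\}$ the distribution $(-\Delta)^N(x'^\alpha h)$ is given by the classical derivative of $x'^\alpha h_{\mathrm{cl}}$ and is bounded by $\|x'\|^{p_0+|\alpha|-2N}$, hence lies in $L^1$ near $0$ once $2N < p_0 + |\alpha| + \dim X'$. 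Choosing the largest admissible $N$ yields $\|x\|^{|\alpha|}|\partial_x^\alpha \s{F}_{B'}h(x)| = O(\|x\|^{-p_0-\dim X'})$ with exponent independent of $\alpha$.

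The main obstacle is the possible distributional components of $h$ supported at $\{0\}$: the definition of $\HH_0^+$ places no bound on the order of such a component, and its Fourier transform is a polynomial that could destroy uniformity in $\alpha$. The observation that closes the argument is that multiplication by $x'^\alpha$ annihilates any distribution supported at $\{0\}$ once $|\alpha|$ exceeds its order; thus for large $|\alpha|$ the polynomial contribution disappears. For $|\alpha|$ at most the order $K_0$ of $h$ as a compactly supported distribution, the elementary Paley--Wiener--Schwartz bound $|\partial_x^\alpha \s{F}_{B'}h(x)| \leq C(1+\|x\|)^{K_0}$ is enough. Combining the two regimes produces a single exponent $p = \max(K_0, -p_0-\dim X')$ valid for all $\alpha$, placing $\s{F}_{B'}h$ in $\HH_\infty^+(X)$.
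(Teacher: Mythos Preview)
Your forward implication is correct; the $\|x'\|^{2N}$/Laplacian device does the same work as the paper's Weyl-commutation argument, just packaged differently.

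In the reverse implication your conclusion is right, but the justification you give for ``the polynomial contribution disappears'' is incomplete. Killing the point-supported part of $h$ by the prefactor $x'^\alpha$ is only half the issue: once you have reduced to the $L^1_{\mathrm{loc}}$ function $x'^\alpha h_{\mathrm{cl}}$, you still must know that the \emph{distributional} $(-\Delta)^N$ of this function coincides with the classical one, i.e.\ that $(-\Delta)^N$ does not itself manufacture new terms supported at $0$. This is not automatic (recall $\Delta\|x'\|^{2-n}=c_n\delta_0$), and your text does not address it. It is, however, true under precisely your hypothesis $2N<p_0+|\alpha|+\dim X'$: for a function $g$ with $\|x'\|^m|\partial^m g|=O(\|x'\|^s)$ near $0$ and $s>-\dim X'$, one checks by induction on $|\beta|$ that the distributional $\partial^\beta g$ equals the classical one whenever $|\beta|<s+\dim X'$, since the boundary term in the integration by parts over $\{\|x'\|>\epsilon\}$ is $O(\epsilon^{s-|\beta|+\dim X'})\to 0$. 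With this lemma your argument closes. (A secondary wrinkle: when $p_0\le -\dim X'$ the function $h_{\mathrm{cl}}$ is not locally integrable, so the splitting $h=h_{\mathrm{cl}}+(\text{supported at }0)$ you invoke is unavailable; one must first multiply by enough coordinate monomials to land in the integrable range before running the delta-killing step.)

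The paper avoids this bookkeeping with a short induction on $\lceil -p_0\rceil$: in the base case $p_0>0$ the restriction $h_{\mathrm{cl}}$ extends continuously to $0$, so $h$ splits cleanly as a continuous compactly supported function plus a distribution supported at $0$ (whose Fourier transform is a polynomial), and one checks directly that $v^m h_{\mathrm{cl}}\in C^m$ for every linear form $v$, giving the $\HH_\infty$ bound with $p=0$. For the step, multiplication by any linear form $v$ raises $p_0$ by one, so $\partial_v\s{F}_{B'}h=c\,\s{F}_{B'}(vh)\in\HH_\infty^+(X)$ by induction, and letting $v$ range over a basis forces $\s{F}_{B'}h\in\HH_\infty^+(X)$. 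Your approach is more explicit about the exponent; the paper's is shorter and dodges the delta-term analysis entirely.
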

\begin{proof} 
Given $x'\in X'$, the function 
 $x\mapsto B(x,x')$, denoted by $B_{x'}$ in equation (\ref{weyl}),section 5,  will now be
denoted simply by $x'$. 

Let $u=f\in\HH_{\infty}^+(X)=\t{C}^{\infty}(X)\cap\HH_{\infty}(X)$, and let $p\in\R$ be
as in (\ref{Hdfn1}). Choose an integer $k\geq 0$ so that $k-p>\dim X$.
The inequality of (\ref{Hdfn1}) assumed for $\|x\|\mapsto\infty$  now implies

\begin{equation}\label{Hprop1}
(v')^m\pd_v^{m+r+k}f\in\abss(X)\mbox{ for all }m\geq 0,r\geq 0, v'\in X',v\in X.
\end{equation}
From (\ref{weyl}), we deduce
\begin{equation}\label{Hprop2}
\pd_{v'}^mv^{m+r+k}\fb f\in\t{C}_0(X')\mbox{ for all }m\geq 0,r\geq 0, v'\in X',v\in X.
\end{equation}
From Weyl's
commutation relations, one checks that for every $h\in\Z$, 
\begin{equation}\label{weyl3}
\{\pd_{v'}^mv^n:n-m=h,v\in X,v'\in X'\}\mbox{ and }\{v^n\pd_{v'}^m:n-m=h,
v\in X,v'\in X'\}
\end{equation}
have the same linear span in the Weyl algebra of $X'$, the ring of differential
operators with polynomial coefficients on $X'$. 
Putting $h=k,k+1,...$ we deduce that 
\begin{equation}\label{Hprop3}
v^{m+r+k}\pd_{v'}^m\fb f\in\t{C}_0(X')\mbox{ for all }m\geq 0,r\geq 0, v'\in X',v\in X
\end{equation}
We then claim that for all $v'\in X'$ and $m\geq 0$,
\\(i)   $x'\mapsto\pd_{v'}^m\fb f(x')$ is continuous at all $0\neq x'\in X'$, and
\\(ii) $|\pd_{v'}^m\fb f(x')|\leq C(v',m)\|x'\|^{-k-m}$ for all $0\neq x'\in X'$.
\\(iii) $|\pd_{v'}^m\fb f(x')|$ is $\O(\|x'\|^{-h})$ as $x'\to\infty$ for
every $h\in\Z$.

Both (i) and (ii) are deduced from (\ref{Hprop3}) by putting
$r=0$, letting the $v$'s run through a basis of $X$, for any fixed
choice of $v'\in X'$ and $m\geq 0$. Whereas (iii) is deduced in the same manner
by letting $r$ go to infinity in (\ref{Hprop3}). Let $\phi\in\test(X')$
so that $\phi\inverse(1)$ contains a neighbourhood of zero in $X'$. Then
(i) and (ii) imply that $\phi\fb f $ belongs to $\HH_0^+(X')$
whereas (i) and (iii) imply that $(1-\phi)\fb f$ belongs to $\sxp$.

We have now shown $\implies$ of the proposition. 

For the reverse implication, 
it suffices to prove that $\s{F}_{B'} u\in\HH_{\infty}^+(X)$ whenever 
$u\in\HH_0^+(X')$. So let $u$ be such a distribution on $X'$. Let $p(u)$
be the supremum of the set of $p\in\R$ for which the inequality of
(\ref{Hdfn1}) is valid for $\|x'\|\to 0$. 

We shall deal first with the case: $p(u)>0$. Recall that the restriction of $u$ to $X'\setminus\{0\}$
is given by a \df function $f:X\setminus\{0\}\to\C$. The assumption
$p(u)>0$ implies that $f$ extends as a continuous function $f:X'\to\C$
such that $f(0)=0$. In particular, $f$ gives rise to a distribution on $X$,
which we once again denote by $f$. 
The distribution $w=u-f$ is supported at $0$. By a theorem of L. Schwartz
 (see [R2],thm.6.25, page 150, and the identity (\ref{weyl}) for tempered
distributions), 
the Fourier transform of $w$ is a polynomial on $X$ and therefore belongs to
$\HH_{\infty}^+(X)$. So it remains to show that $\s{F}_{B'}f$ is in
$\HH_{\infty}^+(X)$. Because $f$ is compactly supported, its Fourier
transform is \df. So it remains only to verify the bounds on  $\s{F}_{B'}f$ imposed by (\ref{Hdfn1}).
 The bounds (\ref{Hdfn2}) placed on $u$ imply that $v^mf$ is $m$-times continuously differentiable on $X'$,
for all $v\in X$.  It follows that the distribution $\pd_{v'}^mv^mf$ is a continuous function
for all $m\geq 0,v\in X,v'\in X$, and therefore in $\abss(X')$, being
compactly supported. The identities (\ref{weyl}) now show that 
$\pd_v^m\s{F}_{B'}(x)$ is $\O(\|x\|^{-m})$ as $\|x\|\to\infty$, as required.

The general case $p(u)+k>0$ is dealt with by induction on $k\geq 0$.
From Weyl's commutation relations, one sees that 
$p(v.u)\geq 1+p(u)$ for all $v\in X$. Thus we may assume that $\s{F}_{B'}(v.u)$ is in $\HH_{\infty}^+(X)$ for all $v\in X$. In other words, $\pd_v\s{F}_{B'}u$
belongs to $\HH_{\infty}^+(X)$ for every $v\in X$. From this, it
is immediate that $\s{F}_{B'}u$ is itself in $\HH_{\infty}^+(X)$. This
completes the proof.
\end{proof}
\begin{rem}\label{obs} 
We intend to express some distributions in the form
 encountered in Proposition \ref{triple}. For this, it is
useful to note that 
\[v+f+\s{F}_{B'}v'=v_1+f_1+\s{F}_{B'}v'_1\implies v-v_1\in\test(X)\mbox{ and }
v'-v'_1\in\test(X')\] 
where it is assumed that $v,v_1\in\s{D}_c(X),v',v'_1\in\s{D}_c(X'),f,f_1\in\sx$.  
Indeed, by Paley-Wiener-Schwartz, $\s{F}_{B'}(v'-v'_1)$ is \df, and so it
follows that $v-v_1$ is \df as well. The same argument  after an application
of $\fb$ to both sides shows that $v'-v'_1$ is \df as well.
\end{rem}

\begin{prop}\label{Htriple}
\noindent
\begin{enumerate}
\item $\HH_{\infty}(X)=\s{D}_c(X)+\sx+\s{F}_{B'}\HH_0^+(X')$.
\item  $\HH(X)=\HH_0^+(X)+\sx+\s{F}_{B'}\HH_0^+(X')$.
\item $\HH(X)=\HH_{\infty}(X)\cap\s{F}_{B'}\HH_{\infty}(X')$.
\item $\fb\HH(X)=\HH(X')$.
\end{enumerate}
\end{prop}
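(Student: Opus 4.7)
The four parts will be proved in order, with (1) and (2) obtained by cutoff arguments invoking Proposition \ref{Hprop} and (3), (4) falling out as formal consequences. For (1), given $u\in\HH_{\infty}(X)$, choose $\phi\in\test(X)$ equal to $1$ on a neighborhood of the singular support of $u$. Then $\phi u\in\s{D}_c(X)$, while $(1-\phi)u$ is smooth on all of $X$ and inherits the decay at infinity from $u$, hence lies in $\HH_{\infty}^{+}(X)$. Proposition \ref{Hprop} writes $\fb((1-\phi)u)=g'+v'$ with $g'\in\sxp$ and $v'\in\HH_0^{+}(X')$, and applying $\s{F}_{B'}$ gives $(1-\phi)u=f+\s{F}_{B'}v'$ with $f=\s{F}_{B'}g'\in\sx$. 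Conversely each summand lies in $\HH_{\infty}(X)$: compactly supported distributions vanish at infinity, $\sx$ is clear, and $\s{F}_{B'}\HH_0^{+}(X')\subseteq\s{F}_{B'}(\sxp+\HH_0^{+}(X'))=\HH_{\infty}^{+}(X)$ by Proposition \ref{Hprop}.

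For (2) the cutoff is chosen supported near $0$ with $\phi=1$ on a smaller neighborhood of $0$. Since $u\in\HH(X)$ is smooth away from $0$, its singular support is contained in $\{0\}$; then $\phi u\in\HH_0^{+}(X)$ (compact support, singular support in $\{0\}$, and agreement with $u$ on a neighborhood of $0$ giving the decay estimate at $0$), while $(1-\phi)u\in\HH_{\infty}^{+}(X)$, and the argument of (1) then produces the required decomposition. For the reverse inclusion each summand belongs to $\HH(X)$: members of $\HH_0^{+}(X)$ have compact support and are smooth off $\{0\}$, with the decay at $0$ by definition; Schwartz functions are clear; and $\s{F}_{B'}\HH_0^{+}(X')\subset\HH_{\infty}^{+}(X)\subset\HH(X)$, since smoothness near $0$ makes the decay estimate at $0$ automatic.

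For (3), the inclusion $\subseteq$ follows by applying $\fb$ term by term to the decomposition of (2): $\fb v\in\HH_{\infty}^{+}(X')$ by Proposition \ref{Hprop} applied to the dual $(X',X,B',\G')$ (which gives $\fb\HH_0^{+}(X)\subset\HH_{\infty}^{+}(X')$), $\fb f\in\sxp$, and $\fb\s{F}_{B'}v'=v'\in\HH_0^{+}(X')$, all sitting inside $\HH_{\infty}(X')$. For $\supseteq$, given $u\in\HH_{\infty}(X)\cap\s{F}_{B'}\HH_{\infty}(X')$, apply the dual form of (1) to $\fb u\in\HH_{\infty}(X')$ to write $\fb u=w'+g'+\fb w$ with $w'\in\s{D}_c(X')$, $g'\in\sxp$, $w\in\HH_0^{+}(X)$, whence $u=\s{F}_{B'}w'+f+w$ with $f=\s{F}_{B'}g'\in\sx$ and $\s{F}_{B'}w'\in\t{C}^{\infty}(X)$ by Paley-Wiener-Schwartz. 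Thus $u$ is smooth away from $0$, satisfies the decay estimate at $0$ (trivially from the first two summands, by hypothesis on $w$), and has the decay estimate at infinity by assumption, so $u\in\HH(X)$.

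Finally (4) is immediate: applying $\fb$ to the equation in (2) and using $\fb\s{F}_{B'}=\mathrm{id}$ yields $\fb\HH(X)=\fb\HH_0^{+}(X)+\sxp+\HH_0^{+}(X')$, which coincides with the analogous decomposition of $\HH(X')$ given by the dual form of (2). The one substantive ingredient throughout is Proposition \ref{Hprop}; the chief source of friction in the remaining work is bookkeeping which summand is responsible for smoothness and which for each decay estimate in each region.
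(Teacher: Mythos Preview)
Your proof is correct and follows essentially the same cutoff-plus-Proposition~\ref{Hprop} strategy as the paper for parts (1) and (2). For (3) and (4) there is a mild difference worth noting: the paper derives (3) by writing $u$ in two ways (via (1) and its dual) and invoking Observation~\ref{obs} to force $v-v_1\in\test(X)$, hence $v\in\HH_0^+(X)$; you instead take a single decomposition of $\fb u$ and verify the defining conditions of $\HH(X)$ directly, using that $u\in\HH_\infty(X)$ is given. Likewise the paper reads (4) off from the symmetric form of (3), whereas you read it off from (2). Both routes are short and equivalent; your version avoids Observation~\ref{obs} at the cost of a small extra verification, while the paper's version makes the symmetry in (3) do all the work.
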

\begin{proof} We first note that
\begin{enumerate}
\item[(a)] $\HH_{\infty}^+(X)=\sx+\s{F}_{B'}\HH_0^+(X')$
\item[(b)] $\HH_{\infty}(X)=\HH_{\infty}^+(X)+\s{D}_c(X)$
\item[(c)] $\HH(X)=\HH_0^+(X)+\HH_{\infty}^+(X)$.
\end{enumerate} 
Both (b) and (c) are obtained by writing $u=\phi.u+(1-\phi).u$ for
suitable test functions $\phi\in\test(X)$, and (a) is just Proposition \ref{Hprop}. Now part (1) follows from (a) and (b), and part (2) follows from
(a) and (c). 

We come to part (3). By part (1), we may express $u\in\HH_{\infty}(X)$ as $u=v+f+\s{F}_{B'}v'$
where $v\in\s{D}_c(X),f\in\sx,v'\in\HH_0^+(X')$. Now, if $u$ also belongs
to $\s{F}_{B'}\HH_{\infty}(X')$, we see that $u=v_1+f_1+\s{F}_{B'}v'_1$
 with $v_1\in\HH_0^+(X),f_1\in\sx, v'_1\in\s{D}_c(X')$. By remark  \ref{obs}, we see that $v-v_1\in\test(X)$. Because $v_1$ belongs to $\HH_0^+(X)$, 
we see that $v$ belongs to the same space as well. It follows that
$\HH_{\infty}(X)\cap\s{F}_{B'}\HH_{\infty}(X')$ is the same as
$\HH_0^+(X)+\sx+\s{F}_{B'}\HH_0^+(X')$.  Part (3) now follows from part (2).
Part (4) is immediate from part (3).

\end{proof}
\subsection{$T_B$ and $T_B^{reg}$ for mild singularities}
\noindent
\\Every $\phi$ in the Schwartz space of $X$ gives rise to the \df function $T_B\phi$ on $X\times X'$ (see prop.\ref{FPSF}),
and every tempered distribution $u$ on $X$ gives rise to the distribution $T_Bu$ on $X\times X'$ ( see prop. \ref{FPSF2}). 
Every $u\in\HH_{\infty}(X)$ is a tempered distribution on $X$; therefore $T_Bu$ is defined, and we wish to study it.
The next lemma deals with the preliminary case where $u=f\in\HH_{\infty}^+(X)$.

\begin{lem}\label{mainthmpre}Let $f\in\HH_{\infty}^+(X)$. Then  
$T_Bf$ is continuous on $X\times(X'\setminus\G')$. Furthermore, $T_Bf(x,x')$
 is simply the sum of the t-summable series  $\sum{\g\in\G} f(x+\g)\psi_B(\g,x')$
for every $x\in X,x'\in X',x'\notin \G'$. 

\end{lem}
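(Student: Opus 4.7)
My approach is to realize $T_Bf$ as a continuous function on $X\times(X'\setminus\G')$ by absorbing the slow decay of $f$ into a finite-difference operator whose symbol on $\G^*$ vanishes only on $\G'$.

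Fix a $\Z$-basis $e_1,\ldots,e_k$ of $\G$ and set
\[
c\,=\,\sum_{i=1}^k(1-e_i)*(1-e_i^{-1})\in\C[\G],\qquad h_c(x')\,=\,\sum_{i=1}^k|1-\psi_B(e_i,x')|^2.
\]
Then $h_c$ is a smooth function on $X'$ whose zero set is exactly $\G'$. Let $p$ be a decay parameter for $f\in\HH^+_\infty(X)\subset\hinf$. The Taylor identity $f(y+e_i)+f(y-e_i)=2f(y)+\pd_i^2 f(y)+O(\|y\|^{p-4})$ shows that each factor of $c$ acts like a discrete Laplacian and lowers the $\hinf$-exponent by $2$, so for $n$ large enough that $p-2n<-k$ the smooth function $g:=c^{(n)}*f$ on $X$, defined by $g(y)=\sum_{\g\in\G}c^{(n)}(\g)f(y-\g)$, satisfies $\sum_{\g\in\G}\sup_{y\in\Omega}|g(y+\g)|<\infty$ for every bounded open $\Omega\subset X$.

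Lemma \ref{consistency} applied with $A=\G$ then identifies $T_Bg$ on all of $X\times X'$ with the absolutely convergent continuous function $(x,x')\mapsto\sum_{\g\in\G}g(x+\g)\psi_B(\g,x')$. On the other hand, expanding $c^{(n)}*u$ as a finite linear combination of translates of $u$ and reindexing the $\G$-sum in $T_B$ gives the operator identity $T_B(c^{(n)}*u)=h_c(x')^n\,T_Bu$ for $u\in\sx$, an identity that extends verbatim to $u\in\sx^*$ by Proposition \ref{FPSF2}. Taking $u=f$ yields
\[
T_Bg\,=\,h_c(x')^n\,T_Bf\qquad\text{as distributions on }X\times X'.
\]
Since $h_c$ is smooth and nowhere vanishing on $X'\setminus\G'$, dividing displays $T_Bf$ as the continuous function $h_c(x')^{-n}\sum_{\g\in\G}g(x+\g)\psi_B(\g,x')$ on $X\times(X'\setminus\G')$.

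It remains to recognize this formula as the t-convergent sum. The element $c'\in\C[\G]$ given by $c'(\g)=c^{(n)}(\g)\psi_B(\g,x')$ has $\epsilon(c')=h_c(x')^n\neq 0$ when $x'\notin\G'$, and $c'*(f|_\G\cdot\psi_B(\cdot,x'))$ equals $\psi_B(\cdot,x')\cdot g|_\G$, which lies in $\abss(\G)$; the definition of t-summability in Section~2 therefore gives
\[
\sum_{\g\in\G}f(x+\g)\psi_B(\g,x')\,=\,h_c(x')^{-n}\sum_{\g\in\G}g(x+\g)\psi_B(\g,x'),
\]
matching the expression for $T_Bf$ above. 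The main technical obstacle is the Taylor-expansion estimate producing a $g=c^{(n)}*f$ with $\hinf$-exponent below $-k$; after that, Lemma \ref{consistency} together with the algebraic identity $T_B(c^{(n)}*\,\cdot)=h_c^n T_B$ supply the rest.
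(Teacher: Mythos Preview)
Your proof is correct and follows essentially the same strategy as the paper: pass from $f$ to a finitely differenced $g$ with enough decay (the paper uses $g=(1-L_v)^m f$ for a single $v\in\G$, you use $g=c^{(n)}*f$ with the combined element $c$ of Lemma~\ref{fd}), identify $T_Bg$ as a continuous function via Lemma~\ref{consistency}, and then divide by the symbol to recover $T_Bf$ where the symbol is nonzero. The only difference is cosmetic: the paper's choice handles one open piece $\{\psi_B(v,x')\neq 1\}$ at a time and covers $X'\setminus\G'$ by varying $v$, while your Laplacian $c$, whose symbol $h_c$ vanishes exactly on $\G'$, treats all of $X'\setminus\G'$ at once.
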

\begin{proof}
\sloppypar
The action of $X$ on $\sx$ and $\smdouble$ given by
translations: \[\t{L_v}f(x)=f(x-v)\mbox{ and }\t{L_v}h(x,x')=h(x-v,x')
\,\,\forall x,v\in X,\forall x'\in X'\] for all $f\in\sx,h\in\smdouble$
extends in the standard manner to an action on $\sx^*$ and $\disdouble$
respectively. Furthermore $T_B\circ L_v=L_v\circ T_B$ for all $v\in X$.
It follows that $T_B$ commutes with $(1-L_v)^m$ for all $m\geq 0$.

Now let $f\in\HH_{\infty}^+(X)$. Choose $m\geq 0$ so that 
$\pd_v^mf$ is $\O(\|x\|^{-(1+\dim X)})$.
Employing (\ref{ftcc}), such upper bounds are valid for 
$g=(1-L_v)^mf$ as well. 
 Lemma \ref{consistency} shows that the distribution $T_Bg$ is a
continuous function, and also that $T_Bg(x,x')$ is the sum of the 
absolutely convergent series $\Sigma_{\G}g(x+\g)\psi_B(\g,x')$ for
all $x\in X,x'\in X'$.

Now take $v\in\G$. Let $w(x,x')=\psi_B(v,x')$ for all $(x,x')\in X\times X'$.
By (\ref{tinvariance}), we see that $T_BL_vu=w.T_Bu$ for all $u\in\sx^*$. It
follows that $T_Bg=(1-w)^mT_Bf$ .The continuity of $T_Bf$ on the region $w\neq 1$ follows.
That the series in lemma \ref{mainthmpre} is t-summable has already been
remarked in thm. \ref{thmtcon}. The sum of this t-summable series
is $(1-\psi_B(v,x'))^{-m}T_Bg(x,x')$ when $\psi_B(v,x')\neq 1$, by its
definition. Now, if $x'\notin\G'$, there is some $v\in\G$ for which
$\psi_B(v,x')\neq 1$. This proves the lemma. 

That $T_Bf$ is \df can be proved by the same method. But it is also
a  consequence of the theorem below because $\HH_{\infty}^+(X)\subset
\HH(X)$.

\end{proof}

The definition of $T^{reg}_Bu$ in the next thm. has been  given in definition \ref{regdfn}.
\begin{thm}\label{mainthm} Let $u\in\HH(X)$. Let $f:X\setminus\{0\}\to\C$
(resp. $f':X'\to\C$) be the \df function obtained by restricting $u$ (resp.
$\fb u$) to the complement of $0$ in $X$ (resp in $X'$). 
Let $U=X\setminus(\G\setminus\{0\})$ and let  
 $U=X'\setminus(\G'\setminus\{0\})$.  Then  
\begin{enumerate}
\item $T_B^{reg}u$ is defined
\item $T_B^{reg}|_{U\times U'}$ is \df.
\item Furthermore $T_B^{reg}u(x,x')$ equals the sum of both the t-summable series
below 
\\(a) $-\psi_B(-x,x')f'(x')+\sum{0\neq\g\in\G}f(x+\g)\psi_B(\g,x')$
 for all $x\in U,x'\notin\G'$
\\(b) $ -f(x)+\sum{0\neq\g'\in\G'}f'(x'+\g')\psi_{B}(-x,x'+\g')$ for all $x\notin\G,
x'\in U'$.

\end{enumerate}

\end{thm}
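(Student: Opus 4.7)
The plan is to reduce everything to Proposition~\ref{triple} via the decomposition of $\HH(X)$ supplied by Proposition~\ref{Htriple}(2), and then identify each of the three pieces of $T_B^{\reg}u$ on the regions of interest. First I would invoke Proposition~\ref{Htriple}(2) to write $u = v + f_0 + \s{F}_{B'}v'$ with $v \in \HH_0^+(X)$, $f_0 \in \sx$ and $v' \in \HH_0^+(X')$. Schwartz functions are smooth, and by Paley--Wiener--Schwartz so is the Fourier transform of any compactly supported distribution; hence the singular support of $u$ coincides with that of $v$, which by definition lies in $\{0\}$. The symmetric argument applied to $\fb u \in \HH(X')$ (using Proposition~\ref{Htriple}(4)) places its singular support in $\{0\}$ as well. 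Thus the hypotheses of Proposition~\ref{triple} are met with $S = S' = \{0\}$, and parts (1)--(2) of the present theorem follow from that proposition with $U = X \setminus (\G \setminus \{0\})$ and $U' = X' \setminus (\G' \setminus \{0\})$.

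For part (3)(a) I would expand $T_B^{\reg}u = T_Bu - u_0 - \sigma_{B'}(\fb u)_0$ and compute each summand on the region $(X \setminus \G) \times (X' \setminus \G')$. Split $u = v + w$ with $w = f_0 + \s{F}_{B'}v' \in \HH_\infty^+(X)$. Lemma~\ref{mainthmpre} identifies $T_Bw(x,x')$ on $X \times (X' \setminus \G')$ with the t-convergent sum $\sum_{\g \in \G}w(x+\g)\psi_B(\g,x')$. The compactly supported $v$ restricts to a compactly supported smooth function $g$ on $X \setminus \{0\}$; Lemma~\ref{consistency} applied with $A = \{\g\}$ represents each $v_\g$ on $(X \setminus \{-\g\}) \times X'$ by $g(x+\g)\psi_B(\g,x')$, and because $g$ is compactly supported only finitely many of these terms are nonzero at any $(x,x') \in (X \setminus \G) \times X'$. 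Adding the two contributions gives
\[
T_Bu(x,x') = \sum_{\g \in \G}f(x+\g)\psi_B(\g,x') \quad\text{on }(X \setminus \G) \times (X' \setminus \G'),
\]
a t-convergent series with $f = g + w$. On the same region, Lemma~\ref{consistency} with $A = \{0\}$ identifies $u_0$ with $f(x)$, and its dual analog identifies $\sigma_{B'}(\fb u)_0$ with $\psi_B(-x,x')f'(x')$; subtracting yields formula (3)(a) on the dense open subset $(U \setminus \{0\}) \times (X' \setminus \G') \subset U \times (X' \setminus \G')$.

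Both sides of (3)(a) are smooth on the full set $U \times (X' \setminus \G')$: the left side by Proposition~\ref{triple}, and the right side because the $w$-contribution is t-convergent and smooth by Lemma~\ref{mainthmpre}, while the $g$-contribution is locally a finite sum of smooth functions (in any compact neighborhood of an $x_0 \in U$ only finitely many $\g \neq 0$ can satisfy $x_0 + \g \in \supp(g)$, and for those $\g$ the argument $x + \g$ stays bounded away from $0$ for $x$ near $x_0$). Hence the identity extends by continuity to all of $U \times (X' \setminus \G')$. Part (3)(b) follows by the same argument applied to the dual quadruple $(X', X, B', \G')$, with Theorem~\ref{breakup}(5) used to transfer the computation to the $X'$-side. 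The principal subtlety is establishing the convergence and smoothness of the displayed series near $x = 0 \in U$, and this is precisely what forces the decomposition into the compactly supported piece $v$ and the controlled-growth piece $w$.
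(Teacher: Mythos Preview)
Your proof is correct and follows essentially the same route as the paper: use Proposition~\ref{Htriple}(2) to feed $u$ into Proposition~\ref{triple} for parts (1)--(2), then split $u$ into an $\HH_0^+(X)$ piece and an $\HH_\infty^+(X)$ piece, handling the latter via Lemma~\ref{mainthmpre} and the former via Theorem~\ref{breakup} together with Lemma~\ref{consistency}, and finally invoke duality for (3)(b). The only cosmetic difference is that the paper works directly with $T_Bu-u_0=\sum_{\g\neq 0}u_\g$ on all of $U\times(X'\setminus\G')$, whereas you first identify $T_Bu$ and $u_0$ separately on $(U\setminus\{0\})\times(X'\setminus\G')$ and then pass to the closure by continuity; this detour is harmless (and note that Lemma~\ref{mainthmpre} as stated gives only continuity of $T_Bw$, which is all your extension argument needs).
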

\begin{proof} By \ref{Htriple}(2), we see that $u$ has the form encountered
in \ref{triple}. The singular supports of $u$ and $\fb u$ are contained
in $\{0\}$. So parts (1) and (2) follow from \ref{triple}. 

The above lemma shows that $T_B^{reg}u$ is given by the sum in (a) 
when $u\in\HH_{\infty}^+(X)$. If $u\in\HH_0^+(X)$, the sum in (a) is finite
and thus equals $T_B^{reg}$ by thm. \ref{breakup}. It is clear that
$\HH_{\infty}^+(X)+\HH_0^+(X)=\HH(X)$. Therefore it has been proved that the sum 
in (a) equals $T_B^{reg}u$ for all $u\in\HH(X)$. The remaining equality
is obtained by interchanging the roles of $u$ and $\fb u$.

\end{proof}
\begin{rem} With $U$ as given in prop. \ref{triple} and $U'$ as above,
it is clear that the equality of $T_B^{reg}$ with the sum in (a) holds
also when $u\in\HH_{\infty}(X)$. 
\end{rem}
\begin{dfn}\label{value}
 We define $\s{L}u=T_B^{reg}u(0,0)$ for $u\in\HH(X)$
and   $\s{L}u'=T_{B'}^{reg}u'(0,0)$ for $u\in\HH(X)$. We note
that $\s{L}u=\s{L}\fb u$ and also that $\s{L}$ is invariant
under the action of $\t{Aut}(\G)$. 
\end{dfn}
\subsection{Proof of Theorem \ref{mainthmintro}}
\noindent
\\
Let us return to the situation of Theorem \ref{mainthmintro}.  Thus we are given a \df function $f:X\setminus\{0\}\to\C$ ,
homogeneous of type $(-s,\epsilon)\in\C\times\{\pm 1\}$, i.e. 
\[f(tx)=t^{-s}f(x)\text{ and }f(-x)=\epsilon f(x)\text{ for all }0\neq x\in X\text{ and }t>0\]
where it is assumed that 
\\(a) $(s,\epsilon)\notin\{(m+\dim(X),(-1)^m):m\geq 0,\,m\in\Z\}$
\\(b) $(s,\epsilon)\notin\{(-m,(-1)^m): m\geq 0,\,m\in\Z\}$

Condition (a) guarantees that $f$ extends uniquely to a distribution $u$ on $X$ with the same homogeneity degree. We see that
$u\in\HH(X)$. Its Fourier transform $\fb u$ belongs to $\HH(X')$ and is homogeneous of type $(s-\dim(X),\epsilon)$. 
The restriction of $\fb u$ to $X'\setminus\{0\}$ is a \df function, which will be denoted by $\fb f$ for simplicity. Condition (b) 
ensures that $\fb u$ is the unique homogeneous distribution on $X'$ that extends the \df function $\fb f$. 
Taking the \df function $F_f^{reg}$ of thm. \ref{mainthmintro} to be $T_B^{reg}u$, we see that   part (3) of thm. \ref{mainthm}
implies part (1) of thm.\ref{mainthmintro}.
The next lemma proves the rest  of thm. \ref{mainthmintro}.

\begin{lem} The sum of the h-summable series $\Sigma\{f(\g): 0\neq \g\in\G\}$ equals $\s{L}(u)$ as given in defn.\ref{value}.

\end{lem}
\begin{proof}

The action
of $g\in\t{GL}(X)$ on distributions on $X,X',X\times X'$ is denoted by
$\rho(g),\rho'(g),\rho''(g)$ respectively, and 
$\lambda\mapsto\lambda_X$ denotes the inclusion $\R\units\hkr\t{GL}(X)$.

Let $N\in\N$. For 
a distribution $w$ on $X\times X'$ or $X'$ that is invariant under
translation by $\G'$, let $U_Nw$ be the sum of its translates over
all the $N$-torsion points of $X'/\G'$. For $u\in\sx^*$ and $u_{\g}$
 as in \ref{breakup} we note that $U_Nu_{\g}=N^{\dim X}u_{\g}$ if
$\g\in N\G$ and zero otherwise. Summing over $\G$ and simplifying,
we see:
\\(a)  $U_NT_Bu=N^{\dim X}\rho''(N_X)T_B\rho(N_X)\inverse u$.

 The terms
$u_{\g}$ for $\g=0$ (resp $u'_{\g'}$ for $\g'=0$ when $u'\in\sxp^*$) 
will be denoted by $u(x,0)$ (resp. $u'(0,x')$). Let $T_B'u=T_Bu-u(x,0)$.
We see that (a) can be rewritten as:
\\(b) $U_NT_B'u=N^{\dim X}\rho''(N_X)T_B'\rho(N_X)\inverse u$.

Now write $U_Nw=w+U_N'w$. Subtract $\psi_B(-x,x')\fb u (0,x')$
from both sides of (b) when $u\in\HH(X)$. We obtain:
\begin{equation}\label{atlast}
U_N'T_B'u+T_B^{reg}u=N^{\dim X}\rho''(N_X)T_B^{reg}\rho(N_X)\inverse u
\mbox{ for all }u\in\HH(X).
\end{equation}
All the three terms above are \df at $(0,0)$ and evaluation at this point
gives
\begin{equation}\label{final}
U_N'T_B'u(0,0)+\s{L}u=N^{\dim X}\s{L}\rho(N_X)\inverse u\mbox{ for all }u
\in\HH(X).
\end{equation}
Part 3(a) of Thm. \ref{mainthm}  has a sum indexed by $0\neq\g\in\G$, and this
is precisely $T'_Bu(x,x')$. Because $U_N'T_B'u(0,0)$ is the sum of 
the $T'_Bu(0,x')$ taken
over all the nontrivial $N$-torsion points $x'$ of $X'/\G'$ we see that (\ref{final})
is equivalent to (\ref{hintro})  under the homogeneity assumption $\rho(\lambda_Xu)=\lambda^{s}u$
 for all positive real numbers $\lambda$.
\end{proof}
\subsection{translation-invariant integrals and sums}
\noindent
\\ We define $\int_Xu$ for certain distributions $u$ on $X$. Proposition \ref{strong} shows that this definition contains 
t-summability.
\begin{dfn}\label{tplus}
 A distribution $u$ on $X$ \textbf{is t-integrable} if there is a 
pair $(\phi,f)$ with $\phi\in\test(X),f\in\sx$ that satisfy
\begin{enumerate}
\item $\int_X\phi\neq 0$ and $\int_Xf\neq 0$, 
\item $\phi*u\in\sx^*$ and $f*(\phi*u)\in\abss(X)$.
\end{enumerate}
$\int_Xu$ is then defined as $(\int_X\phi.\int_Xf)\inverse\int_Xf*(\phi*u)$
and is seen to be independent of the choice of $(\phi,f)$ thanks to
commutativity.
\end{dfn}
\begin{dfn}
A function $a:\G\to\C$ is \textbf{$\t{t}^+$summable} if there is a pair
$(c,g)$ of $\C$-valued functions on $\G$, where $c$ has finite support,
and $g$ is rapidly decreasing i.e. $|g(\g)|$ is $\O(\|\g\|^{-r})$ for all $r$,
with 
\\(i) $\Sigma_{\G}c\neq 0$ and $\Sigma_{\G}g\neq 0$
\\(ii) $c*a:\G\to\C$ has polynomial growth, and
 $g*(c*a)$ belongs to $\abss(\G)$.
\\
$\Sigma_{\G}a$ is then defined to be $(\Sigma_{\G}c)^{-1}(\Sigma_{\G}g)^{-1}\Sigma_{\G}g*(c*a)$.
\end{dfn}
\begin{prop}\label{strong}
If $a:\g\to\C$ is t-summable, then it is  $\t{t}^+$summable.
\\ If $a$ is $\t{t}^+$summable, then
the distribution $E(a)=\sum{\g\in\G}a(\g)\delta_{\g}$
(where $\delta_{\g}$ is the Dirac distribution at $\g$) on $X$ is t-integrable.
\end{prop} 
\begin{proof}
The first assertion is clear. 

For the second assertion,
choose any $h\in\test(X)$ with $\int_Xh\neq 0$. If the
pair $(c,g)$ proves the $\t{t}^+$summability of $a$,then 
the pair $(\phi,f)$ given by $\phi=h*E(c)$ and $f=h*E(g)$ proves
the t-integrability of $E(a)$. 
\end{proof}


\section{Groups acting on sets}
 \subsection{Noncommutative Canonical Extension} 
 \noindent
 \\
We improve the canonical extension of section 2.1 to include a possibly noncommutative setting. Our data now is:
\\(a) an associative ring $A$ (that is not assumed to be  commutative), \linebreak
a \emph{surjective} ring homomorphism $\epsilon:A\to k$ where $k$ is a (commutative) field, with $\textbf{m}= \ker(\epsilon)$,
\\(b) an inclusion $M\hookrightarrow N$ of left $A$-modules, and 
\\(c) a left $A$-module homomorphism $I:M\to k$.

The \emph{canonical extension} associated to this data is the left $A$-module homomorphism $I_c:M_c\to k$, where $M_c$ is a $A$-submodule
of $N$. In order to construct the canonical extension, we require the collections $\s{W}$ and $\s{W}_1$.
\\$\s{W}=\{W:M\subset W\subset N, W\mbox{ is a left }$A$-\mbox{submodule, and }
\t{Hom}_A(W'/M,k)=0\mbox{ for all }$A$-\mbox{left submodules } M\subset W'\subset W\}$. 
\\$\s{W}_1$ the collection of $W\in\s{W}$ for which there
is an left $A$-module homomorphism $I_W:W\to k$ satisfying $I_W|_M=I$. Evidently, if such an $I_W$ exists, it is unique, because 
$\Hom_A(W/M,k)=0$.

We first check that if $W_1,W_2\in\s{W}$, then $W_1+W_2$ also belongs to
$\s{W}$. Indeed, if $M\subset W'\subset W_1+W_2$, let $D=W'\cap W_1$. Note
that $D/M$ and $W'/D$ are  subquotients of $W_1/M$ and $W_2/M$ respectively.
It follows that $\t{Hom}_A(D/M,k)=\t{Hom}_A(W'/D),k)=0$ and this shows
that $\t{Hom}_A(W'/M,k)=0$ as well. Thus $W_1+W_2\in\s{W}$. By induction,
it follows that $\s{W}$ is closed under finite sums. Zorn's lemma implies 
that $\s{W}$ is closed under arbitrary sums.

We continue by showing that $\s{W}_1$ has the same property. Let $W_1,W_2\in
\s{W}_1$. By definition, we have $A$-module homomorphisms 
$I_1:W_1\to k$ and $I_2:W_2\to k$ that satisfy $I_1|_M=I_2|_M=I$. Let $W'=W_1\cap W_2$. We see that $W'\in\s{W}_1$.  The uniqueness
of $I_{W'}$ shows that $I_1|_{W'}=I_2|_{W'}$. We deduce that
there is a $\tilde{I}:W_1+W_2\to k$ that extends both $I_1$ and $I_2$. 
Therefore $W_1+W_2$ also belongs to $\s{W}_1$. We deduce that $\s{W}_1$ is closed under arbitrary sums.
\begin{dfn}\label{noncomdfn}
$M_c:=\Sigma\{W:W\in\s{W}_1\}$ and $I_c:M_c\to k$ is the unique $A$-module homomorphism that restricts to the given $I:M\to k$.
\end{dfn}

The lemma below is useful for the proposition that follows. But the lemma follows straight
from the definitions, and so we skip its proof.
\begin{lem}\label{goose} 
Let $W\in\s{W}_1$ and let $I_W:W\to k$ be the unique left $A$-module
homomorphism that extends $I:M\to k$. Then the canonical extension $(W_c, (I_W)_c)$
associated to the data $(A,\epsilon:A\to k, W\hkr N, I_W:W\to k)$
 is the same as $(M_c,I_c)$.
\end{lem}

 We have seen that $n\in N$ belongs to $M_c$ if
and only if $M+An\in\s{W}_1$. We spell out this condition as explicitly as we
can below. 

For $n\in N$, let $J_n\subset A$ be the left ideal that annihilates
its image $\o{n}\in N/M$. Let $i_n:A/J_n\to N/M$ be the homomorphism that
sends $1$ to $\o{n}$.
The short exact sequence $0\to M\to N\to N/M\to 0$ gives 
$\xi\in\t{Ext}^1_A(N/M,M)$. The functoriality of $\t{Ext}$ in both variables,
given $I:M\to k$ and the above $i_n:A/J_n\to N/M$ produces an
element $\theta_n\in\t{Ext}^1_A(A/J_n,k)$.  

We deduce that 
$n\in\s{W}$ if and only if $\t{Hom}_A(A/J_{an},k)=0$ for all $a\in A$.
And $n\in\s{W}_1$ if and only if $\theta_n$ vanishes, in addition.

 By the long exact
sequence of $\t{Ext}_A(\cdot,k)$ one identifies 
$\t{Hom}_A(A/J_n,k)$  and $\t{Ext}^1_A(A/J_n,k)$ with 
$\t{Hom}_k(A/\textbf{m}+J_n,k)$
and $\t{Hom}_k((\textbf{m}\cap J_n/\textbf{m}J_n),k)$ respectively.
Under this identification, the composite $\textbf{m}\cap J_n\to
\textbf{m}\cap J_n/\textbf{m}J_n\xr{\theta_n}k$ is easily seen to
be $a\mapsto I(an)$ for all $a\in \textbf{m}\cap J_n$.

Let $S$ be the complement of $\textbf{m}$ in $A$. 
The condition $\t{Hom}_A(A/J_n,k)=0\iff J_n+\textbf{m}=A\iff S\cap J_n\neq\emptyset$. The statement below summarises this discussion.

\begin{lem}\label{need}
Let $n\in N$. Then $n\in M_c$ if and only if 
\\ \emph{(a)} $J_{an}+\textbf{m}=A$ for all $a\in A$, and
\\ \emph{(b)} $I(an)=0$ for all $a\in\textbf{m}\cap J_n$.
\end{lem}
When $A$ is commutative, the condition $J_n+\textbf{m}=A$
suffices for (a) because $J_{an}$ contains $J_n$ for all $a\in A$.
It also suffices for (b) because $\textbf{m}\cap J_n=\textbf{m} J_n$.
We deduce that the $(M_c,I_c)$ given
here in Definition \ref{noncomdfn} is consistent with that of  Section  \ref{canext}. 

We continue with $A=\C[G]$ and $\epsilon:A\to\C$ the augmentation homomorphism
as before. The canonical extension $(M_c,I_c)$ will now be denoted by
$(M_G,I_G)$. If $H$ is a subgroup, we also get the canonical extension
when $A$ is replaced by $\C[H]$, and this will naturally be denoted by
 $(M_H,I_H)$.
\begin{prop} Assume that $H$ is a normal subgroup of $G$. Then
\begin{enumerate}
\item $M_H$ is a $G$-submodule of $M_G$ and $I_G|_{M_H}=I_H$.
\item $(M_G,I_G)$ as defined above is also the canonical extension associated
to the data $(\C[G],M_H\hkr N,I_H:M_H\to\C)$.

\end{enumerate}
\end{prop}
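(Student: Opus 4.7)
The plan is to show that $M_H$, together with its extension $I_H$, lies in the analogue of $\s{W}_1$ built using $A = \C[G]$. Write $\s{W}_1^H$ and $\s{W}_1^G$ (and $\s{W}^H,\s{W}^G$) for the collections $\s{W}_1$ and $\s{W}$ of \ref{gpa} formed with $A=\C[H]$ and $A=\C[G]$ respectively. Once $M_H\in\s{W}_1^G$ is established, part (1) will drop out by uniqueness of the extension to $M_G$, and part (2) will be an immediate application of Observation \ref{goose}.

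The main step is the $G$-stability of $M_H$: I will show that for each $W\in\s{W}_1^H$ and each $g\in G$ one has $gW\in\s{W}_1^H$. Normality of $H$ makes $gW$ a $\C[H]$-submodule of $N$, and $G$-invariance of $M$ gives $M=gM\subset gW$. Given a $\C[H]$-submodule $M\subset W'\subset gW$ and a $\C[H]$-linear map $\phi:W'/M\to\C$, define $\phi':g^{-1}W'/M\to\C$ by $\phi'(x+M)=\phi(gx+M)$; for $h\in H$ one has $\phi'(hx+M)=\phi(ghg^{-1}\cdot gx+M)=\epsilon(ghg^{-1})\phi(gx+M)=\epsilon(h)\phi'(x+M)$, using normality of $H$ to keep $ghg^{-1}$ inside $\C[H]$ and the fact that $\epsilon$ is trivial on group elements. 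Hence $\phi'$ is $\C[H]$-linear, and $g^{-1}W'\subset W$ together with $W\in\s{W}^H$ forces $\phi'=0$, so $\phi=0$. Similarly, pushing $I_W$ forward by $g$ via $(g_*I_W)(gw):=I_W(w)$ gives a $\C[H]$-linear extension of $I$ to $gW$, using $G$-invariance of $I$. Thus $gW\in\s{W}_1^H$, and consequently $gM_H\subset M_H$; symmetry under $g\leftrightarrow g^{-1}$ yields equality.

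Next I argue $I_H$ is itself $G$-invariant: for each $g\in G$ the pullback $m\mapsto I_H(gm)$ is, by the previous step, a $\C[H]$-linear extension of $I$ to $M_H$, and such an extension is unique because applying the $\s{W}^H$ condition for $M_H$ with $W'=M_H$ gives $\Hom_{\C[H]}(M_H/M,\C)=0$. It now follows that $M_H$ is a $\C[G]$-submodule of $N$, contains $M$, carries the $\C[G]$-linear extension $I_H$ of $I$, and satisfies the $\s{W}^G$ condition, since any $\C[G]$-submodule $M\subset W'\subset M_H$ is a fortiori a $\C[H]$-submodule and
\[\Hom_{\C[G]}(W'/M,\C)\subset \Hom_{\C[H]}(W'/M,\C)=0.\]
Therefore $M_H\in\s{W}_1^G$, so $M_H\subset M_G$ by construction of $M_G$, and $I_G|_{M_H}=I_H$ by uniqueness of the $\C[G]$-linear extension to $M_H$. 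This settles (1).

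Part (2) is then immediate: Observation \ref{goose} applied in the $A=\C[G]$ setting to $W=M_H\in\s{W}_1^G$ with extension $I_H$ says exactly that the canonical extension associated to $(\C[G],M_H\hookrightarrow N,I_H)$ coincides with $(M_G,I_G)$. The only genuine obstacle in the argument is the first step, namely the naturality of the $\s{W}_1^H$ construction under conjugation by elements of $G$; this is where normality of $H$ is used, and the rest is bookkeeping via uniqueness of extensions.
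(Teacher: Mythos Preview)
Your proof is correct and follows essentially the same route as the paper: show $M_H\in\s{W}_1^G$ via the inclusion $\Hom_{\C[G]}(W'/M,\C)\subset\Hom_{\C[H]}(W'/M,\C)=0$, and then invoke Observation \ref{goose}. The paper simply asserts that ``it is evident that $M_H$ is a $G$-module and that $I_H$ is a $G$-invariant linear functional,'' whereas you supply the details of this step (the conjugation argument using normality of $H$ and the uniqueness of the $\C[H]$-linear extension); otherwise the arguments coincide.
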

\begin{proof} It is evident that $M_H$ is a $G$-module and that $I_H$ 
is a $G$-invariant linear functional. By assumption, there are no 
nonzero $H$-invariant linear functionals on any $H$-module $W$,
with $M\subset W\subset M_H$. It follows that
there are no 
nonzero $G$-invariant linear functionals on any $G$-module $W$,
with $M\subset W\subset M_H$. It follows that $M_H\in\s{W}_1$. The rest
of the proposition follows from \ref{goose}.

\end{proof}
From now on, we concentrate on the case: $X$ is a set equipped with $G$-action,
$M=\abss(X)\subset N=\C^X$ and $I:M\to\C$ is $\Sigma_X$. When $f:X\to\C$
belongs to $M_G$, we will say the series $\Sigma_Xf$ is $G$-summable.

Very specifically, we take $X=\Q^k$ (endowed with the discrete topology) equipped with the action of 
$G=\Q^k\rtimes\t{GL}_k(\Q)$.  
The center of
$\t{GL}_k(\Q)$ is denoted simply by $\Q\units\subset\t{GL}_k(\Q)$. 
We put $H=\Q^k\rtimes\Q\units\subset G$.  
In view of the remark following lemma \ref{need}, the preceeding proposition, and the normality of $\Q^k$
 and $H$ in $G$, we have the chain
of spaces
\[\abss(\Q^k)\subset\ts(\Q^k)=\abss(\Q^k)_{\Q^k}\subset\abss(\Q^k)_H\subset\abss(\Q^k)_G.
\]
where $\ts(\Q^k)$ is the space of t-summable functions.

The precise relation between $H$-summability as above and h-summability in the section 4 has not been worked out. The  proposition below
is a lengthy verification, relying on the claim that follows--the details are omitted.
\begin{prop}\label{Handh}
Let $f:\R^k\setminus\{0\}\to\C$ be a \df function homogeneous
of type $(-s,\epsilon)$. Assume $f\neq 0$. Then the series $\Sigma\{f(x):0\neq x\in\Z^k\}$  is $H$-summable if
and only if
\\(i) $(-s,\epsilon)\notin\{ (-k+m,(-1)^m):m\geq 0,m\in\Z\}$, or (ii) $f$ is a polynomial.

\end{prop}
\emph{Claim}. Restrict the $f$ given in the proposition to $\Z^k-\{0\}$ and then extend it by zero to get $g:\Q^k\to\C$. Then $\C[H]g+\abss(\Q^k)$ 
equals $B+\abss(\Q^k)$, where $B$ is the linear span of $\{P(D)f|_{x+N\Z^k}\}$ 
taken over all constant coefficient differential operators $P(D)$, and  all $x\in\Q^k$ and all natural numbers $N$.

\begin{ex}
We take $m=1$. For $k=1$, this says that $h:\Q\to\C$ is not H-summable, where given by 
$h(x)=x/|x|$ if $0\neq x\in\Z$, and $h(x)=0$ for all other rational numbers $x$
is not $H$-summable. This may also be seen very directly in the following
manner. Assume the contrary. Then we see that 
\[\Sigma_{\Q}h=\sum{x\in\Q}h(x)=\sum{x\in\Q}h(ax+b)
\]
for all $a\in\Q\units, b\in\Q$. Taking $a=-1,b=0$ we see that $\Sigma_{\Q}h=0$.
Taking $a=1,b=1$ we see that the sum of the series $x\mapsto h(x+1)-h(x)$
, indexed by $x\in\Q$ is zero, which contradicts the fact that this sum is 2!

For $m=1,k=2$, the function $f(x,y)=x/(x^2+y^2)$ has already occurred in
the counterexample given in the proof of \ref{special}(2). 
 \end{ex}

\noindent
\textbf{Acknowledgments}. The author thanks David Mumford for introducing him 
to the space $\smdouble$ in a beautiful course of lectures in 1979.
He also thanks C.Kenig, J.Lagarias, W.Li, S.Miller, R.Narasimhan, 
W.Schlag, W.Schmid and D. Thakur for useful comments
and assistance with references. 
\\The author also expresses his profound gratitude to the reviewer for going through the paper carefully and  making several recommendations to improve
its readability.

\end{document}